\newcommand{\fonction}[4]{\begin{array}{lrcl}
 & #1 & \longrightarrow & #2 \\
    & #3 & \longmapsto & #4\end{array}}
\title{Invariant measure and long time behavior of regular solutions of the Benjamin-Ono equation}
\author{ Mouhamadou Sy\\
  \scriptsize{Laboratoire AGM}\\
  \scriptsize{Université de Cergy-Pontoise, France}\\ \scriptsize{mouhamadou.sy@u-cergy.fr}}
\date{}
\definecolor{Red}{rgb}{0.7,0,0.1}
\definecolor{Green}{rgb}{0,0.7,0}
\definecolor{labelkey}{rgb}{0,0,1}
\numberwithin{equation}{section}
\newtheorem*{Thm*}{Theorem}
\newcommand{\E}{\mathbb{E}}
\newcommand{\T}{T}
\theoremstyle {definition} \newtheorem {defi} {Definition} [section] }
\theoremstyle {plain}  \newtheorem {thm} [defi] {Theorem}}
\theoremstyle {plain}  \newtheorem {cor} [defi]{Corollary}}
\theoremstyle {plain} \newtheorem {prop} [defi]{Proposition}}
\theoremstyle {plain} \newtheorem {nem}[defi] {Lemma}}
\theoremstyle {plain} }
\theoremstyle {plain} \newtheorem {rmq}[defi] {Remark}}
\theoremstyle {plain}  }
\theoremstyle {plain}  }
\theoremstyle {plain}  }
\theoremstyle {plain} }
\theoremstyle {plain} }
\theoremstyle {plain} }
\theoremstyle {plain} }
\newcommand{\osgn}{\mathop{\rm sgn}\limits}
\def\E{{\Bbb{E}}}
\def\T{{\Bbb{T}}}
\def\P{{\Bbb{P}}}
\def\R{{\Bbb{R}}}
\def\Z{{\Bbb{Z}_0}}
\def\N{{\Bbb{N}}}
\def\dx{{\partial_x}}
\def\i{{\text{i}}}
\def\dy{{\partial_x^2}}
\def\dt{{\partial_t}}
\def\du{{\partial_u}}
\def\f{{\mathfrak{F}}}
\def\f{{\mathfrak{F}}}
\def\lleq{{\ \lesssim\ }}
\begin{document}

\markboth{ M. Sy}
{Invariant measure for the Benjamin-Ono equation}

\maketitle

\begin{abstract}
The Benjamin-Ono equation describes the propagation of internal waves in a stratified fluid. In the present work, we study large time dynamics of its regular solutions via some probabilistic point of view. We prove the existence of an invariant measure concentrated on $C^\infty(\T)$ and establish some qualitative properties of this measure. We then deduce a recurrence property of regular solutions and other corollaries using ergodic theorems. The approach used in this paper applies to other equations with infinitely many conservation laws, such as the KdV and cubic Schrödinger equations in 1D. It uses the fluctuation-dissipation-limit approach and relies on a \textit{uniform} smoothing lemma for stationary solutions to the damped-driven Benjamin-Ono equation.

\smallskip
\noindent \textbf{Keywords:} Benjamin-Ono equation, invariant measure, long time behavior, regular solutions, inviscid limit

\end{abstract}

\nocite{*}

\section{Introduction}

\subsection*{The problem and statement of the main result}

The Benjamin-Ono (BO) equation 
\begin{equation}\label{BO}
\dt u+H\dy u+u\dx u=0
\end{equation}
describes the propagation of internal waves in a stratified fluid \cite{benjamin1967internal,ono}. The operator $H$ in the equation is the Hilbert transform, it can be defined in Fourier setting as the multiplier by $-\i\osgn$ (see Appendix). We assume that $u(t,x)$ is a real-valued function, $t\in\R_+$ and $x$ belongs to the torus $\T=\R/2\pi\Bbb Z.$ In this setting, existence and uniqueness of solution hold in any Sobolev space $H^s$ for $s\geq 0$ (see \cite{molinet2008global, molinet2012cauchy} for its global wellposedness in $L^2(\T)$, and references therein for other works on the Cauchy problem of the equation). In the present paper, we use only the wellposedness of the problem in Sobolev spaces $H^s(\T)$ with $s\geq 2$, so we refer the reader to \cite{abdelouhab1989nonlocal}.

In $L^2:=L^2(\T)$, the wellposedness of $(\ref{BO})$ generates a topological dynamical system (DS) $(L^2,\phi_t)$, where $\phi_t$ is the flow of the equation. We are concerned with the description of the long time behavior of this dynamical system.

Given a Borel measure $\mu$ on $L^2$, we say that $\mu$ is invariant for $(L^2,\phi_t)$ if for any Borel set $A$ of $L^2,$ we have
\begin{equation*}
\mu(\phi_t^{-1}A)=\mu(A),\ \ \ \text{ for all}\ t.
\end{equation*}    
When such a measure exists, the triple $(L^2,\phi_t,\mu)$ is called a measurable dynamical system (MDS). If in addition $\mu$ is finite, then we have very important information on the dynamics. Indeed  the Poincaré recurrence theorem states that the dynamics is recurrent, that is, $\mu$-almost every orbit returns in any neighborhood of its origin  in finite time.
 The well-known Von Neumann and Birkhoff ergodic theorems also apply to give more information on the long time behavior of the system. 
Our aim here is to construct such a measure, this will contribute to improving the understanding of the behavior of the solutions of $(\ref{BO})$.    

 Matsuno \cite{matsuno1984bilinear} derived (at least formally) infinitely many conservation laws for the BO equation $(\ref{BO}).$ They have the form
\begin{equation}\label{form_energies}
E_n(u)=\|u\|_n^2+R_n(u), \ \ \ \ n\in\frac{\N}{2},
\end{equation}
where $\|.\|_n$ stands for the homogeneous Sobolev norm of order $n$ and $R_n$ is a lower order term.

In \cite{tv2011gaussian,tzvetkov-visc,tv2014invariant,yudeng,dtv2014invariant} Deng, Tzvetkov and Visciglia constructed a sequence of invariant Gaussian type measures $\{\mu_n\}$ for $(L^2,\phi_t)$ satisfying
\begin{itemize}
\item $\mu_n$ is concentrated on $H^s(\T)$, for $s<n-\frac{1}{2},$ $\ \ \ \ \ \ \ \ \ \ \ (*)$
\item $\mu_n(H^{n-1/2}(\T))=0.$  $\ \ \ \ \ \ \ \ \ \ \ \ \ \ \ \ \ \ \ \ \ \ \ \ \ \ \ \ \ \ \ \ \ \ \ \ \ \ \ \ \ \ \ \ \ \  (**)$
\end{itemize}

Formally, $\mu_n$ is defined as a renormalization of
\begin{equation*}
d\mu_n(u)=e^{-E_n(u)}du=e^{-R_n(u)}e^{-\|u\|_n^2}du,
\end{equation*}
where $E_n(u)$ and $R_n(u)$ are the quantities given in $(\ref{form_energies}).$
These authors constructed a Gaussian interpretation of the expression $e^{-\|u\|_n^2}du$ on the concerned spaces and proved that $e^{-R_n(u)}$ is an integrable density. 
In view of these results, there is an MDS for $(\ref{BO})$ in any Sobolev space and then its large time dynamics is described keeping in mind the theorems mentioned above. However, these results do not apply to infinitely smooth solutions, indeed by the property $(**)$ we have
\begin{equation*}
\mu_n(C^\infty(\T))=0,\ \ \ \text{ for all $n$}.
\end{equation*}

In the present work, we construct a measurable dynamical system for $(\ref{BO})$ on the space $C^\infty(\T).$ Naturally, the Dirac measure at $0$ is not the desired measure although it is invariant under the flow of the BO equation, but it gives only trivial information. More generally, to get substantial information on the system we have to also avoid singular measures. Another example of such measures is the one concentrated on a stationary solution. Notice that measures $\mu_n$ discussed above verify the following "consistency" property: every set of full $\mu_n$-measure is dense in $\dot{H}^{(n-1/2)^-}.$ 
 Concerning the space $C^\infty$, an obstruction to the construction of an invariant Gaussian type measure is the non-existence of a conservation law compatible with the regularity of that space. In particular, the approach used in the construction of the measures $\mu_n$ above does not seem to apply.
 
Another method allowing the construction of invariant measures (a priori not of Gaussian type) for PDEs was developed in \cite{kuk_eul_lim, KS04} in the context of Euler and Schrödinger equations, respectively. It is based on a fluctuation-dissipation (FD) argument and consists of adding to the equation appropriately normalized damping and stochastic terms, constructing an invariant measure for the resulting problem, and passing to the limit. But, a priori, the obstruction encountered in the Gaussian type measure approach still remains in the FD approach because the underlying regularization is of Sobolev order and not $C^\infty$. The idea in the present work is to exploit the regularization inherent in this approach with the use of an infinite sub-sequence of the Benjamin-Ono conservation laws to reach the $C^\infty-$ regularity.\\ In order to bring out a key preliminary result, we  give the following stochastic set up:\\
Consider the diffusion problem (also called the stochastic Benjamin-Ono-Burgers (BOB) equation)
\begin{equation}\label{BOBf}
\dt u+H\dy u+u\dx u=\alpha\dy u+\sqrt{\alpha}\eta, \ \ t>0,\ \ x\in \T,
\end{equation}
where $\eta$ is a stochastic force and $\alpha\in (0,1)$ is a viscosity parameter. 
In fact the problem $(\ref{BO})$ is the limit as $\alpha\to 0$ of $(\ref{BOBf}).$ A probabilistic global wellposedness for $(\ref{BOBf})$ is proved in Section $\ref{Pgwp}.$ Moreover, in Section $\ref{section_inv_visc}$, we establish  the existence of stationary solutions\footnote{Solutions to $(\ref{BOBf})$ whose laws are invariant along the time.} for this equation.  We present now the following smoothing property for stationary solutions:
\begin{nem}\label{nem_central}
Suppose that the noise $\eta$ is sufficiently regular in space. Let $u_\alpha$ be a stationary solution to $(\ref{BOBf})$ such that 
\begin{equation}
\E\|u_\alpha(t)\|^p<\infty \ \ \ \ \forall p\geq 2.\label{cond_p}
\end{equation}
Then 
\begin{equation}
\E\|u_\alpha(t)\|_n^2<\infty \ \ \ \ \forall n\geq 1.\label{resul_n}
\end{equation}
Moreover, if $(\ref{cond_p})$ holds uniformly in $\alpha$ then so does $(\ref{resul_n}).$
\end{nem}
The proof of this lemma relies on a combination of deterministic and probabilistic estimations based on the conservation laws of $(\ref{BO}).$

We prove in Section $\ref{section_inv_visc}$ that any stationary solution to $(\ref{BOBf})$ satisfies $(\ref{cond_p})$ uniformly in $\alpha$. Then, from $(\ref{resul_n})$ we conclude that stationary solutions to $(\ref{BOBf})$ are concentrated on $C^\infty$.
Passing to the limit as the viscosity goes to $0$, we find the main result of this paper (Theorems $\ref{thm_inv_mes}$, $\ref{non_den_plus}$, $\ref{Hausd}$ and $\ref{thm_gauss_dec}$):

\begin{thm}
There is a probability measure $\mu$ invariant under the flow of the BO equation $(\ref{BO})$ defined on $H^3(\T)$ and such that
\begin{equation*}
\mu(C^\infty(\T))=1.
\end{equation*}
Moreover, $\mu$ satisfies the following properties:
\begin{enumerate}
\item For any integer $n$, we have
\begin{equation*}
0<\int_{H^3} \|u\|_n^2\mu(du)<\infty.
\end{equation*}
\item There are constants $\sigma,C>0$ such that for any $R>0$ 
\begin{equation*}
\mu(u\in H^3,\ \|u\|\geq R)\leq Ce^{-\sigma R^2}.
\end{equation*}
\item There is an infinite sequence of conservation laws of the form $(\ref{form_energies})$ whose laws under $\mu$ are absolutely continuous with respect to the Lebesgue measure on $\R$.
\item The measure $\mu$ is  of at least $2$-dimensional nature in the sense that any compact set of Hausdorff dimension smaller than $2$ has $\mu$-measure $0$.
\end{enumerate}
\end{thm}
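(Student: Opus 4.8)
The plan is to obtain $\mu$ as an inviscid limit of stationary measures of the stochastic Benjamin--Ono--Burgers equation \eqref{BOBf}, the whole argument resting on the fluctuation--dissipation balance built into the scaling $\alpha\dy u+\sqrt{\alpha}\eta$. I take as given (from Section~4) a family $\{\mu^\alpha\}_{\alpha\in(0,1)}$ of invariant measures for the BOB flow $\phi^\alpha_t$, each concentrated on $C^\infty(\T)$. The first and decisive step is to bound the Sobolev moments of $\mu^\alpha$ uniformly in $\alpha$. To this end I would apply the It\^o formula to each conserved energy $E_n(u)=\|u\|_n^2+R_n(u)$ of \eqref{form_energies} along the BOB dynamics. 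The transport part of \eqref{BO} contributes nothing, since $E_n$ is conserved by the free flow; the viscosity $\alpha\dy u$ yields a dissipative term with leading part $-2\alpha\|u\|_{n+1}^2$; and the forcing $\sqrt{\alpha}\eta$ produces a mean-zero martingale together with an It\^o correction of size $\alpha\sum_k b_k^2|k|^{2n}$, finite because the noise is spatially smooth. Integrating against the stationary measure, the identity $\frac{d}{dt}\int E_n\,\mu^\alpha(du)=0$ makes the two factors of $\alpha$ cancel, leaving, after estimating the contributions of $R_n$ by interpolation with already-controlled lower norms, a bound
\[
\int_{H^3}\|u\|_{n+1}^2\,\mu^\alpha(du)\le C_n
\]
with $C_n$ independent of $\alpha$. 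A bootstrap in $n$ propagates this to every Sobolev order.

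These uniform estimates make $\{\mu^\alpha\}$ tight on $H^3(\T)$: the bound at a high index $n$ confines almost all the mass, uniformly in $\alpha$, to a ball of a smoother space which is compact in $H^3$. By Prokhorov's theorem I extract a subsequence $\mu^{\alpha_k}\rightharpoonup\mu$. Fatou's lemma transfers every uniform bound to $\mu$, so that $\int\|u\|_n^2\,\mu(du)\le C_n<\infty$ for all $n$; consequently $\mu\bigl(\bigcap_s H^s\bigr)=\mu(C^\infty(\T))=1$, and the finite $L^p$ moments follow by interpolation. This already yields the smoothness statement and the upper bounds of item~(1).

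The heart of the matter is to pass the invariance to the limit. For $f$ bounded and continuous on $H^3$, invariance of $\mu^{\alpha_k}$ under $\phi^{\alpha_k}_t$ gives $\int f\,d\mu^{\alpha_k}=\int f\circ\phi^{\alpha_k}_t\,d\mu^{\alpha_k}$; the left-hand side tends to $\int f\,d\mu$, so it suffices to show $\int f\circ\phi^{\alpha_k}_t\,d\mu^{\alpha_k}\to\int f\circ\phi_t\,d\mu$. I would split this into the weak convergence $\int f\circ\phi_t\,d\mu^{\alpha_k}\to\int f\circ\phi_t\,d\mu$, which uses continuity of the BO flow $\phi_t$ on $H^3$, and the inviscid comparison $\int|f\circ\phi^{\alpha_k}_t-f\circ\phi_t|\,d\mu^{\alpha_k}\to0$. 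The latter is \emph{the main obstacle}: it demands quantitative control of $\phi^\alpha_t u-\phi_t u$ on $[0,t]$ that is integrable against $\mu^{\alpha_k}$. I would obtain it by an energy estimate comparing the two equations on the random datum, the difference being forced to zero as $\alpha\to0$ and the error absorbed using the uniform high-norm bounds, which keep the solutions in a fixed regularity class with overwhelming probability. Making this convergence strong enough to survive integration against the stationary measures is the principal technical difficulty of the existence part.

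It remains to establish nontriviality and the structural properties. The strict positivity in item~(1) follows from the sharp $n=0$ identity: since $E_0=\|u\|_{L^2}^2$ carries no remainder, the It\^o balance gives the exact relation $\int_{H^3}\|u\|_1^2\,\mu^\alpha(du)=\tfrac12\sum_k b_k^2$, an $\alpha$-independent positive constant; uniform integrability of $\|u\|_1^2$, coming from the moment one order higher, lets this pass to the limit, so $\int\|u\|_1^2\,\mu(du)>0$, whence $\mu\neq\delta_0$ and, since $\mu$ then charges the nonconstant functions, $\int\|u\|_n^2\,\mu(du)>0$ for every $n$. For item~(2), the BO energies $E_n$ supply an infinite family of $\phi_t$-invariant, functionally independent quantities; the delicate point is the absolute continuity and independence of their laws under $\mu$, which I would read off from a stationary Fokker--Planck relation for the law of $E_n$ at the level of $\mu^\alpha$ -- where the noise is nondegenerate in the relevant directions -- arranged so as to survive the inviscid limit. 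Item~(3) is then a corollary: taking two such quantities of order $\le 3$, say $F_1,F_2$, independent with absolutely continuous laws, the map $F=(F_1,F_2)$ is locally Lipschitz on $H^3$ and $(F_1,F_2)_\ast\mu$ is absolutely continuous on $\R^2$; hence for any compact $K$ with $\dim_H K<2$ the image $F(K)$ has Hausdorff dimension $<2$, so is Lebesgue-null, and $\mu(K)\le(F_\ast\mu)(F(K))=0$. I expect the absolute continuity of the laws in item~(2), together with the inviscid passage to the limit for invariance, to be the two subtlest ingredients of the whole argument.
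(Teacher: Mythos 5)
Most of your outline tracks the paper's actual proof closely: the uniform-in-$\alpha$ It\^o estimates on the energies $E_n$ with the fluctuation--dissipation cancellation, tightness in $H^3$ from a higher-index bound, Prokhorov plus Fatou, the exact stationary identity $\int\|u\|_1^2\,d\mu^\alpha=A_0/2$ passed to the limit by uniform integrability to get nontriviality, the invariance argument via truncation to balls $B(0,R)$ and to the noise event $\{\|z\|_{L^\infty_tH^2_x}\le r\}$ together with an $O_{R,r,T}(\sqrt{\alpha})$ comparison of the two flows, and a local-time (your ``stationary Fokker--Planck'') argument at the viscous level for the one-dimensional laws of the modified energies $\tilde{E}_n$ --- all of this is essentially what the paper does in Sections 4--6.

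The genuine gap is your treatment of item (3). You claim it is ``a corollary'' of item (2) because two preserved quantities that are ``independent with absolutely continuous laws'' have an absolutely continuous joint pushforward on $\R^2$. But item (2) --- both as stated and as proved --- only yields absolute continuity of each \emph{one-dimensional marginal}; the word ``independent'' there means functional independence of the conserved quantities, not probabilistic independence under $\mu$. One-dimensional absolute continuity of two observables says nothing about their joint law (consider $F_2=F_1$), and probabilistic independence of the energies is neither delivered by the local-time argument nor plausible, since they are strongly correlated functionals of the same field. So the key assertion $F_\ast\mu\ll l_2$, on which your inequality $\mu(K)\le (F_\ast\mu)(F(K))=0$ rests, is exactly what remains unproved. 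The paper establishes it by a separate, genuinely two-dimensional argument: a Krylov estimate for the $2$-dimensional It\^o vector, whose use requires the joint diffusion matrix to be nondegenerate. No single pair of conserved functionals works globally --- characterizing the degeneracy set for two energies $\tilde{E}_{n_1},\tilde{E}_{n_2}$ leads to a nonlinear differential equation of growing order --- so the paper splits the phase space as $H^2=O\cup O^c$ with $O=\{u:\int u^2H\dy u=0\}$, using the pair $\bigl(\int u^2,\int u^3\bigr)$ on $O$ (where $\int u^3$ is conserved by BO and the matrix degenerates only at $u\equiv 0$) and the pair $(E_0,E_{1/2})$ on $O^c$ (where degeneracy of the matrix forces $u\in O$), obtaining uniform-in-$\alpha$ absolute-continuity bounds on each piece and passing to the limit by Portmanteau. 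Without this splitting, or some substitute producing a joint density for a pair of preserved quantities, your proof of item (3) does not go through.
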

In fact, we expect infinite-dimensionality of the measure constructed here as in \cite{kuksin_nondegeul,KS12} concerning the $2-$dimensional Euler equations. To show this property in the context of the Benjamin-Ono equation, we have to prove some algebraic independence of the gradients of the conservation laws. In the present work, we face a technical difficulty in establishing such an independence for an arbitrary number of conservation laws. We propose a proof inspired by  \cite{kuksin_nondegeul,KS12} which works for the (at least) $2$-dimensionality. Then the infinite-dimensionality of $\mu$ remains an open question.

We deduce the following result by applying the Poincaré recurrence theorem.
\begin{cor}
For $\mu$-almost all $w$ in $C^\infty(\T)$, there is a sequence $\{t_k\}$ increasing to infinity such that 
\begin{equation*}
\lim_{k\to\infty} \|S_{t_k}w-w\|_n=0 \ \ \ \ for\ any\ n\geq 0.
\end{equation*}
Here $S_t$ denotes the flow of the Benjamin-Ono equation $(\ref{BO})$ on $H^3(\T)$.
\end{cor}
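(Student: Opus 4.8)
The plan is to derive this recurrence corollary directly from the Poincaré recurrence theorem, applied to the measurable dynamical system $(H^3(\T), S_t, \mu)$ furnished by the main Theorem. The main Theorem provides exactly the two ingredients Poincaré recurrence requires: a finite (indeed probability) measure $\mu$ that is invariant under the flow $S_t$, and a measurable structure in which to work. So the first step is to fix the phase space. Although the natural recurrence statement is about convergence in \emph{every} Sobolev norm $\|\cdot\|_n$, the flow $S_t$ and the invariant measure live a priori on $H^3(\T)$. The key point that unlocks the stronger conclusion is that $\mu$ is concentrated on $C^\infty(\T)$, so we may restrict attention to the full-measure set $C^\infty(\T)$ and use the smoothness to upgrade $H^3$-recurrence to recurrence in all norms.

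First I would recall the continuous-time version of the Poincaré recurrence theorem: if $(X,\mathcal{B},\mu)$ is a probability space and $\{S_t\}$ a measurable measure-preserving flow, then for any measurable set $A$ with $\mu(A)>0$, almost every point of $A$ returns to $A$ at a sequence of times tending to infinity. To extract a convergence statement from this qualitative return property, the standard device is to exhaust the phase space by a countable family of small balls. Concretely, fix a countable dense set $\{w_j\}$ in $C^\infty(\T)$ (dense, say, for the $H^3$-topology) and consider the balls $B(w_j, 1/m)$ in the $H^3$-metric for $j,m\in\n$. Each such ball of positive measure contains, by Poincaré recurrence, a full-measure subset of points returning to it infinitely often. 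Intersecting the resulting full-measure sets over the countable family and discarding the balls of measure zero yields a single full-measure set $\Omega\subset C^\infty(\T)$ with the property that every $w\in\Omega$ returns infinitely often, and at times $t_k\to\infty$, to arbitrarily small $H^3$-neighborhoods of itself. This establishes $\lim_{k\to\infty}\|S_{t_k}w-w\|_3=0$ for $\mu$-a.e. $w$.

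The remaining and genuinely substantive step is the upgrade from $H^3$-recurrence to recurrence in every norm $\|\cdot\|_n$ simultaneously, for a \emph{single} sequence $\{t_k\}$. The difficulty is that the recurrence times produced for the $H^3$-norm need not control the higher norms, and higher Sobolev norms could in principle grow along the orbit. Here I would invoke the conservation structure exploited throughout the paper: the energies $E_n$ of $(\ref{form_energies})$ are preserved by the flow, and by part (1) of the main Theorem the quantities $\int_{H^3}\|u\|_n^2\,\mu(du)$ are finite for every $n$, so $\mu$-a.e. initial datum has all Sobolev norms finite and these norms are bounded along the orbit by conservation of $E_n$. The plan is to show that on a bounded set of $H^n$ the $H^3$-topology and the $H^{n'}$-topology agree for $n'<n$ on sequences, by an interpolation argument: if $\|S_{t_k}w-w\|_3\to 0$ while $\{S_{t_k}w\}$ stays bounded in $H^{n+1}$, then interpolation between $H^3$ and $H^{n+1}$ forces $\|S_{t_k}w-w\|_n\to 0$. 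Thus a single $H^3$-recurrent sequence is automatically recurrent in every $\|\cdot\|_n$, which is exactly the claimed conclusion.

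I expect the interpolation-and-uniform-bounds step to be the main obstacle, since it requires carefully combining the measure-theoretic full-measure set on which all norms are finite, the flow-invariance of the conserved energies $E_n$ (to guarantee orbit-boundedness in every $H^n$), and the elementary but essential interpolation inequality $\|\cdot\|_n\lesssim \|\cdot\|_3^{\theta}\|\cdot\|_{n+1}^{1-\theta}$. Once orbit-boundedness in arbitrarily high Sobolev norms is secured from the conservation laws, the interpolation closes the argument cleanly, and the countable-balls construction plus Poincaré recurrence delivers the sequence $\{t_k\}$.
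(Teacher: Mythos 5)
Your proposal is correct, and its backbone --- Poincar\'e recurrence applied to the measurable dynamical system $(H^3(\T),S_t,\mu)$ furnished by the main theorem, localized via a countable family of balls --- is exactly the mechanism the paper invokes (the paper states the corollary as a direct translation of Poincar\'e recurrence and gives no further detail). Where you genuinely diverge is in how the ``all norms $\|\cdot\|_n$ along a single sequence'' conclusion is secured: you first extract an $H^3$-recurrent sequence and then upgrade it by interpolation, $\|v\|_n\lleq\|v\|_3^{\theta}\|v\|_{m}^{1-\theta}$ for $3<n<m$, using boundedness of the orbit in every $H^m$. The more direct implementation implicit in the paper is to exploit that $\mu(C^\infty(\T))=1$ and that $C^\infty(\T)$ is a Polish space for the Fr\'echet metric $d(u,v)=\sum_n 2^{-n}\min(1,\|u-v\|_n)$: running your countable-balls argument once in this metric yields recurrence in all Sobolev norms simultaneously, with no interpolation and no orbit bounds needed. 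Your two-step route buys something in exchange for the extra work: it makes explicit that individual smooth orbits of BO are bounded in every $H^m$, which is a quantitative fact the Fr\'echet argument never touches. One small imprecision to fix in your write-up: conservation of $E_n$ alone does not bound $\|S_tw\|_n$, since $E_n$ is not coercive by itself; you need either the two-sided bound $(\ref{apriori_est_on_En})$ together with conservation of the $L^2$-norm (giving $\|S_tw\|_n^2\leq 2E_n(w)+2c_n^-\|w\|^{2n+2}$), or directly the modified conservation laws $E_n^*$ of Remark $\ref{remark_nouvelles_lois}$, which satisfy $\|u\|_n^2\leq E_n^*(u)$. Also note that part (1) of the main theorem (finiteness of $\int\|u\|_n^2\,\mu(du)$) is not actually needed: concentration of $\mu$ on $C^\infty(\T)$ already guarantees that $\mu$-a.e.\ datum has all norms finite, and the pathwise bounds come from the conservation laws, not from the moments of $\mu$.
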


In the construction of such a measure, we use the control of Sobolev norms provided by the infinite sequence of conservation laws. The KdV and cubic $1-$dimensional NLS equations have infinitely many conservation laws whose structure is similar to $(\ref{form_energies})$ and our approach applies to these equations. Notice that  an infinite sequence of invariant Gaussian type measures of increasing regularity was constructed for KdV and cubic $1-$dimensional NLS equations in \cite{zhidkov2001korteweg, zhid_shrod}, we give then a kind of extension of this work to the $C^\infty(\T)$ space. However, the Benjamin-Ono equation is more difficult than these equations because of the weakness of its dispersion compared to KdV and the presence of a derivative in its non-linearity compared to NLS.
Then, here, we confine ourselves to the study of the BO equation which is less understood.

Let us briefly discuss an equation having infinitely many conservation laws but not admissible to the approach developed here. Consider the non-viscous Burgers equation
\begin{equation}\label{Burgers}
\dt u+u\dx u=0.
\end{equation}
It is easy to check that an infinite sequence of conservation laws is given by the quantities
\begin{equation*}
L_p(u)=\int u^p, \ \ \ p\geq 1.
\end{equation*}
Our approach does not apply to $(\ref{Burgers})$. This is due to its lack of dispersion which breaks the control of Sobolev's norms. 

\subsection*{Notation}
Let $A$ and $B$ be two positive quantities, we write
\begin{equation*}
A\lleq B
\end{equation*}
if there is a universal constant $\lambda\geq 0$ such that $A\leq \lambda B.$\\
For a real number $r$, we denote by $r^{+}$ (resp. $r^-$) the quantity $r+\epsilon$ (resp. $r-\epsilon$), where $\epsilon$ is a positive number close enough to $0$, while $r_+:=\max(r,0)$.\\
$\Z$ denotes the set of nonzero integers.\\
$\dot{H}(\T)=\{u\in L^2(\T):\ \ \int_\T u(x)dx=0\}.$\\
$\dot{H}^s(\T)=\{u\in\dot{H}(\T):\ \ D^su \in \dot{H}(\T)\},$ and $D^s$ is the $s$th derivative of $u$, where $s\geq 0$.\\
The $\dot{H}^s$-norm is denoted by  $\|.\|_s$ when $s>0$ and the $L^2$-norm is denoted by $\|.\|$.\\
For a functional $A(u)$, we denote the first and second derivatives of $A$ by $A'(u,v):=\partial_u A(u,v)=\partial A|_u(v)$ and $A''(u,v):=\partial_u^2 A(u,v)=\partial^2A|_{u}(v,v)$.\\
The sequence $\{e_n,\ n\in\Z\}$ is given by
$$e_n(x)=\left\{\begin{array}{l r c}\frac{\sin(nx)}{\sqrt{\pi}},\ \  \text{for}\ \ \ \ n>0,\\
\frac{\cos(nx)}{\sqrt{\pi}},\ \ \text{for}\ \ \ \ n<0.
\end{array}\right.$$
and forms an orthonormal basis of $\dot{H}(\T).$\\
$(\Omega,\mathcal{F},\P)$ is a complete probability space and $\mathcal{F}_t$ is a right-continuous filtration augmented with respect to $(\mathcal{F},\P).$
Given a sequence of real numbers $\{\lambda_n\}$ and a sequence of independent real standard Brownian motions $\{\beta_n(t)\}$ adapted to $\mathcal{F}_t,$ we set  
\begin{align}
\zeta (t,x) &=\sum_{n\in\Z}\lambda_n\beta_n(t)e_n(x),\label{brownm}\\
\eta(t,x) &=\frac{d}{dt}\zeta(t,x),\\
A_s &=\sum_{n\in\Z}\lambda_n^{2}n^{2s}.\label{Notat:A_s}
\end{align}
 
 \subsection*{Some stochastic results}\label{intro_section_convol_stoch_et_Ito_lemma}
The theorem and lemma below are useful ingredients in our work, we refer to \cite{karatshre} for their proofs.
\begin{thm}[Doob's optional theorem]\label{intro_Doob}
Let $x_t$ be a continuous $\mathcal{F}_t$-martingale  and $\tau\leq \sigma$ be two  $\mathcal{F}_t$-stopping times which are almost surely finite. Then
\begin{equation}
\E x_\tau=\E x_{\sigma}=\E x_0.
\end{equation} 
\end{thm}

\begin{nem}\label{intro_lemme_prog_mes}
Let $x_t$ be a continuous random process which is adapted to $\mathcal{F}_t$. Then $x_t(\omega)$ is adapted to $\mathcal{F}_t$.
\end{nem}

\paragraph{Stochastic convolution.}
Let $B$ be an operator on a separable Hilbert space $H$ with which we endow a Hilbert basis $\{e_m\}_{m\in\Z}$. Suppose that $\{e_m\}$ are eigenvectors of $B$ whose associated eigenvalues are $\{b_m\}\subset\Bbb C$, and moreover $|b_m|\to \infty$ as $m\to\infty.$   Suppose that
\begin{equation}
V_t(B):=\sum_{m\in\Z}\lambda_m^2\frac{|1-e^{2tb_m}|}{2|b_m|}<\infty\ \ \ for\ all\ t\geq 0,
\end{equation}
then the
quantity (which is called stochastic convolution)
\begin{equation}
\Theta_t(B):=\int_0^te^{(t-s)B}d\zeta(s,x):=\sum_{m\in\Z}\lambda_m\left(\int_0^te^{(t-s)b_m}d\beta_m(s)\right)e_m(x)\ \ \ t\geq 0
\end{equation}
is well defined in $H$. In fact $\Theta_t(B)$ is a continuous Gaussian process in $H$: for all $t\geq 0,$  $\Theta_t(B)\sim \mathcal{N}_H(0,V_t(B))$.  

\begin{rmq}\label{intro_rmq_reg_stoch_convol}
If $\text{Re}(b_m)<0,$ then the sequence $\{|1-e^{2tb_m}|/|2b_m|\}$ is bounded (even uniformly in $t$), therefore $\Theta_t(B)$ is well defined in  $H$ as soon as $\sum_{m}\lambda_m^2<\infty.$

 The concrete case that is studied in this paper is the Hilbert space $L^2(\T)$ and an operator of type $-\dy$ (more exactly $B=-(H-\alpha)\dy$). With the use of the Itô isometry, we have
\begin{equation}
\E\|\Theta_t(B)\|_s^2=\sum_{m\in\Z}m^{2s}\lambda_m^2\frac{|1-e^{2tb_m}|}{2|b_m|}, \ \ \ b_m=-(\i\osgn(m)+\alpha)m^2.
\end{equation} 
Then $\text{Re}(b_m)<0$ for any $m\in\Z$. Therefore $\Theta_t(B)\in H^s$ almost surely as soon as
\begin{equation}
\sum_{m\in\Z}m^{2s}\lambda_m^2<\infty.
\end{equation}
\end{rmq}
In that case, as a Gaussian random variable in $H^s$, the stochastic convolution $\Theta_t$ verifies the Fernique theorem; that is, there is a constant $c_s$ such that
\begin{equation}
\E e^{c_s\|\Theta_t\|_{H^s}^2}<\infty \ \ \ \ for \ \ all \ \ t\geq 0.
\end{equation}
\paragraph{Stochastic well-posedness and the Itô property relative to a Gelfand triple.}
Let us consider the following stochastic PDE:
\begin{equation}\label{intro_EDPS_gen}
du_t=(Lu+f(u))dt+d\zeta,
\end{equation}
where $L$ is a differential operator, $f$ is a function possibly nonlinear in $u$ and $\zeta$ is a Brownian motion defined as in $(\ref{brownm}).$

\begin{defi}\label{intro_SGWP}
Let $s\in\R$. Equation $(\ref{intro_EDPS_gen})$ is said to be stochastically (globally) well-posed in $H^s$ if for all $T>0$ the following properties hold
\begin{enumerate}
\item  for any random variable $u_0$ in $H^s$ which is independent of $\mathcal{F}_t,$ we have, for almost all $\omega\in\Omega$,
\begin{enumerate}
\item (Existence) there exists $u:=u^\omega \in \Lambda_T(s):=C(0,T;H^s)\cap L^2(0,T;H^{s+1})$ satisfying the relation 
\begin{equation}
u(t)=u_0+\int_0^t(Lu_s+f(u_s))ds+\zeta(t)\ \ \text{for all $t\in[0,T]$}\label{intro_sol_mild}
\end{equation}
in $H^{s-1}.$
We denote this solution by $u(t,u_0):=u^\omega(t,u_0).$
\item (Uniqueness) if $u_1,u_2\in \Lambda_T(s)$ are two solutions in the sense of $(\ref{intro_sol_mild}),$ then $u_1\equiv u_2$ on $[0,T],$ 
\end{enumerate}
\item (Continuity w.r.t. initial data) for almost all $\omega,$ we have
\begin{equation}
\lim_{u_{0}\to u_{0}'}u(.,u_0)=u(.,u_0') \ \ \ \text{in $\Lambda_T(s)$},
\end{equation}
where $u_0$ and $u_0'$ are deterministic data in $H^s$;
\item the process $(\omega,t)\mapsto u^\omega(t)$ is adapted to the filtration $\sigma(u_0,\mathcal{F}_t)$.

\end{enumerate}
\end{defi}
\begin{rmq}\label{intro_rmq_def_SWP}
In what follows we call $(H^{s-1},H^s,H^{s+1})$ a Gelfand triple. The process $u_t$ described in Definition $\ref{intro_SGWP}$ satisfies the following properties:
\begin{itemize}
\item Considered as a process in $H^s$, it is progressively measurable w.r.t. $\sigma(u_0,\mathcal{F}_t);$\\
this follows from continuity of $u_t$ and the Lemma $\ref{intro_lemme_prog_mes}.$
\item It satisfies the Feller property, being continuous in $t$ and w.r.t. initial data.
\item It is a Markov process: Set $$P_t(w,\Gamma):=\P(u(t,w)\in\Gamma|u(0)=w);$$ then $P_t$ satisfies the so called Chapman-Kolmogorov relation. Let us write down the corresponding Markov semi-groups.
\begin{equation}
\mathfrak{P}_tf(v)=\int_{H^s}f(w)P_t(v,dw)  \ \ \ C_b(H^s)\to C_b(H^s),
\end{equation}
\begin{equation}
\mathfrak{P}_t^*\mu(\Gamma)=\int_{H^s}\mu(dw)P_t(w,\Gamma)\ \ \ \ \mathfrak{p}(H^s)\to\mathfrak{p}(H^s).
\end{equation}
Here, $C_b(H^s)$ is the space of bounded continuous functions on $H^s$, $\mathfrak{p}(H^s)$ is the set of probability measures on $H^s.$ These maps satisfy the duality relation
\begin{equation}
(\mathfrak{P}_tf,\mu)=(f,\mathfrak{P}_t^*\mu).
\end{equation}
\end{itemize}
\end{rmq}
Now, let us introduce the following definition:
\begin{defi}\label{intro_WS}
We say that $(\ref{intro_EDPS_gen})$ has the Itô property on the Gelfand triple $(H^{s-1},H^s,H^{s+1})$ if
\begin{enumerate}
\item it is stochastically well-posed on $H^s$;
\item  
the process $h:=Lu+f(u)$ is $\mathcal{F}_t$-adapted and
\begin{align}
\P\left(\int_0^t(\|u(r)\|_{s+1}^2+\|h(r)\|_{s-1}^2)dr< \infty, \ \ \forall\ t>0\right)=1,\ \ 
\sum_{m \in\Z}m^{2s}\lambda_m^2<\infty.\label{intro_hyp_KS}
\end{align}
\end{enumerate}
\end{defi}
\begin{rmq}
Our definition of the Itô property is different from what we find in some literature. But the interest of our choice is that part $(2)$ gathers "good" properties of a process allowing us to apply a version of the Itô formula proved in  Section $A.7$ (Theorem $A.7.5$ and Corollary $A.7.6$) of \cite{KS12}. Below, we present that formula.
\end{rmq}
\begin{thm}[see Section $A.7$ of \cite{KS12}]\label{intro_KS_Ito_theorem}
Let $F\in C^2(H^s,\R)$ be a functional which is uniformly continuous, together with its first two derivatives, on any ball of $H^s$. Suppose that $F$ satisfies the following conditions:
\begin{enumerate}
\item There is a function $K:\R_+\to\R_+$ such that
\begin{equation}\label{intro_KS_Ito_F1}
|\nabla_uF(u;v)|\leq K(\|u\|_{s})\|u\|_{{s+1}}\|v\|_{{s-1}},\ \ \ u\in H^{s+1},\ \ v\in H^{s-1}.
\end{equation}
\item For any sequence $\{w_k\}\subset H^{s+1}$ converging toward $w\in H^{s+1}$ and any $v\in H^{s-1}$, we have
\begin{equation}\label{intro_KS_Ito_F2}
\nabla_u F(w_k;v)\to\nabla_uF(w;v),\ \ as\ \ k\to\infty.
\end{equation}
\item
\begin{equation}\label{intro_KS_Ito_F3}
\sum_{m\in\Z}a_m^2\E\int_0^t|\nabla_u F(u;e_m)|^2ds <\infty\ \ \ \ for\ all \ t>0.
\end{equation}
\end{enumerate}
Then we have
\begin{align}\label{intro_KS_Ito}
F(u(t))=F(u(0))+\int_0^t\left(\du F(u(s);f(s))+\frac{1}{2}\sum_{m\in\Z}\partial_u^2F(u(s),g_m)\right)ds \nonumber\\
+\sum_{m\in\Z}\int_0^t\du F(u(s),g_m)d\beta_m(s).
\end{align}
In particular,
\begin{equation}
\E F(u(t))=\E F(u(0))+\int_0^t\E\left(\du F(u(s),f(s))+\frac{1}{2}\sum_{m\in\Z}\partial_u^2F(u(s),g_m)\right)ds.
\end{equation}
If one omits $(\ref{intro_KS_Ito_F3}),$ then we have th3e formula $(\ref{intro_KS_Ito})$ in which $t$ is replaced by the stopping time $t\wedge \tau_n$ where
\begin{equation}
\tau_n=\inf\{t\geq 0,\ \|u(t)\|_{s}> n\}, \ \ n\in\N,
\end{equation}
with the convention $\inf\emptyset=+\infty.$
\end{thm}

\section{Deterministic estimates}
\subsection*{Conservation laws}
Following \cite{tzvetkov-visc}, we define the following subsets of $C^\infty(\T)$:
\begin{align*}
\mathcal{P}_1 &=\{\partial_x^\alpha u,\ \partial_x^\alpha Hu\ | \ \alpha\in\N\},\\
\mathcal{P}_2 &=\{(\partial_x^{\alpha_1}Z_1u)(\partial_x^{\alpha_2}Z_2u)\ |\ \alpha_i\in\N,\ Z_i\in\{Id,H\}\}.
\end{align*}
Let us define in a generic manner the sets $\mathcal{P}_n \ n\geq 3$ containing the functions of the form
\begin{equation}
p_n(u)=\prod_{i=1}^k Z_i(p_{j_i}(u)),\ \text{where}\ Z_i\in\{Id,H\},\ \ \sum_1^kj_i=n,\ \ p_{j_i}\in\mathcal{P}_{j_i}, \ 2\leq k\leq n,\ \ j_i<n.\label{form_pn}
\end{equation} 
To a function $p_n(u)$ of the form $(\ref{form_pn}),$ we associate the function
\begin{equation}
\tilde{p}_n(u)=\prod_{i=1}^kp_{j_i}(u),\label{form_pn-tild}
\end{equation}
and we set the quantities
\begin{align*}
S(p(u)) &=\sum_{i=1}^n\alpha_i,\\
M(p(u)) &=\max_{1\leq i\leq n}\alpha_i.
\end{align*}

The following is a description given in \cite{tzvetkov-visc} for the integer order remainder terms:
\begin{equation}\label{reste_loi}
R_n(u)=\sum_{\substack{p(u)\in \mathcal{P}_3 \\ \tilde{p}(u)=u\partial_x^{n-1}u\partial_x^nu}} c_{n}(p)\int p(u)
 +\sum_{\substack{p(u)\in \mathcal{P}_j\ j=3,...,2n+2 \\ S(p(u))=2n-j+2\\ M(p(u))\leq n-1}}c_{n}(p)\int p(u),
\end{equation}
where $c_n(p)$ are some constants. The first three integer order conservation laws are
\begin{align*}
E_0(u)&=\int u^2,\\
E_1(u)&=\int (\dx u)^2+\frac{3}{4}\int u^2H\dx u+\frac{1}{8}\int u^4,\\
E_2(u) &= \int (\dy u)^2-\frac{5}{4}\int \left((\dx u)^2H\dx u+2\dy uH\dx u\right)\\
&+\frac{5}{16}\int\left(5u^2(\dx u)^2+u^2(H\dx u)^2+2uH(\dx u)H(u\dx u)\right)\\
&+\int\left(\frac{5}{32}u^4H(\dx u)+\frac{5}{24}u^3H(u\dx u)\right)+\frac{1}{48}\int u^6.
\end{align*}

\subsection*{Estimates}
 Let us give some properties for the integer order conservation laws of the Benjamin-Ono equation.
\begin{nem}
For any integer $n\geq 1,$ there are $c_n^-,c_n^+>0$ such that for all $u$ in $H^n(\T)$ 
\begin{equation}
\frac{1}{2}\|u\|_n^2-c_n^-\|u\|^{2n+2}\leq E_n(u)\leq 2\|u\|_n^2+c_n^+\|u\|^{2n+2}.\label{apriori_est_on_En}
\end{equation}
\end{nem}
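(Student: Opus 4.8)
The plan is to use the decomposition $E_n(u)=\|u\|_n^2+R_n(u)$ from $(\ref{form_energies})$ and reduce the two-sided bound $(\ref{apriori_est_on_En})$ to the single remainder estimate $|R_n(u)|\le\frac12\|u\|_n^2+c_n\|u\|^{2n+2}$; indeed, once this holds the upper and lower inequalities follow immediately with $c_n^\pm=c_n$ since $\frac12\le1\le2$. Thus everything reduces to estimating the finitely many integrals $\int p(u)$ listed in $(\ref{reste_loi})$.

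First I would normalise each monomial. Every $p(u)$ is, up to a constant, a product of $j\ge3$ factors $Z_i\partial_x^{\alpha_i}u$ with $Z_i\in\{Id,H\}$, of homogeneity degree $j$, total derivative count $S=S(p)$ and maximal order $M=M(p)$. Since the Hilbert transform $H$ is bounded on $L^{p}$ for every $1<p<\infty$ and commutes with $\partial_x$, the operators $Z_i$ are harmless in the $L^{p_i}$ estimates that follow, so I may pretend $Z_i=Id$.

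The heart of the argument is Hölder combined with Gagliardo--Nirenberg interpolation. For exponents $p_i\in(1,\infty)$ with $\sum_i 1/p_i=1$, Hölder gives $\left|\int p(u)\right|\lesssim\prod_i\|\partial_x^{\alpha_i}u\|_{L^{p_i}}$, and each factor is controlled by $\|\partial_x^{\alpha_i}u\|_{L^{p_i}}\lesssim\|u\|_n^{\theta_i}\|u\|^{1-\theta_i}$ with $\theta_i=(\alpha_i+\frac12-\frac1{p_i})/n$, the torus corrections being pure powers of $\|u\|$ that are absorbed later. Using $\sum_i 1/p_i=1$ and multiplying, this yields $\left|\int p(u)\right|\lesssim\|u\|_n^{\Theta}\|u\|^{j-\Theta}$ with $\Theta=\sum_i\theta_i=(S-1+j/2)/n$.

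The decisive point is the sub-criticality of the derivative budget encoded in $(\ref{reste_loi})$. For the first sum, $\tilde p(u)=u\partial_x^{n-1}u\partial_x^n u$ forces $j=3$, $S=2n-1$, hence $\Theta=2-\frac1{2n}<2$; for the second sum the constraints $S=2n-j+2$ and $j\ge3$ give $\Theta=2-\frac{j-2}{2n}<2$. In all cases $\Theta<2$, while $M\le n-1$ (resp. the choice $p_i=2$, $\theta_i=1$ for the single factor $\partial_x^n u$) ensures $\theta_i\in[0,1]$, so the interpolation is legitimate. Since $\Theta<2$, Young's inequality converts $\|u\|_n^{\Theta}\|u\|^{j-\Theta}$ into $\varepsilon\|u\|_n^2+C_\varepsilon\|u\|^{2n+2}$; choosing $\varepsilon$ small enough that the finitely many terms together contribute at most $\frac12\|u\|_n^2$ gives the remainder estimate, and with it $(\ref{apriori_est_on_En})$. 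The step I expect to be most delicate is the book-keeping: checking that the structural constraints on $(S,M,j)$ really do yield $\Theta<2$ uniformly over every admissible monomial, and handling the borderline factor of order $n$ in the first sum.
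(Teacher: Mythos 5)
Your route is the same as the paper's: the paper proves this lemma only by the remark that it ``can be established using similar arguments as in the proof of Lemma~\ref{lemme_central}'', i.e.\ exactly your scheme (discard the Hilbert transforms by $L^p$-boundedness, H\"older, Gagliardo--Nirenberg, Young), and your bookkeeping up to the product bound is correct: $\Theta=(S+j/2-1)/n$ equals $2-\frac{1}{2n}$ for the first sum and $2-\frac{j-2}{2n}$ for the second, so $\Theta<2$ uniformly, and your handling of the borderline factor $\partial_x^n u$ is fine. The gap is your last step. Young's inequality does \emph{not} convert $\|u\|_n^{\Theta}\|u\|^{j-\Theta}$ into $\varepsilon\|u\|_n^2+C_\varepsilon\|u\|^{2n+2}$: with the conjugate exponents $2/\Theta$ and $2/(2-\Theta)$ it yields $\varepsilon\|u\|_n^2+C_\varepsilon\|u\|^{2(j-\Theta)/(2-\Theta)}$, and the computation you never carry out gives $2(j-\Theta)/(2-\Theta)=2(2n+1)=4n+2$ for \emph{every} admissible monomial (first sum: $j-\Theta=1+\frac{1}{2n}$, $2-\Theta=\frac{1}{2n}$; second sum: $j-\Theta=(j-2)\frac{2n+1}{2n}$, $2-\Theta=\frac{j-2}{2n}$, and the factor $j-2$ cancels). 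So what your argument actually proves is $(\ref{apriori_est_on_En})$ with $\|u\|^{2n+2}$ replaced by $\|u\|^{4n+2}$.

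Moreover this is not a removable inefficiency in your estimates: the exponent $2n+2$ fails for the sign-definite zero-derivative term $\int u^{2n+2}$ of $R_n$ (the case $j=2n+2$, $S=0$; concretely $\frac18\int u^4$ in $E_1$ and $\frac{1}{48}\int u^6$ in $E_2$, which carry positive coefficients, so no cancellation can help the upper bound). For a mean-adjusted bump of height $h=K/w$ and width $w$ one has $\int u^{2n+2}\sim K^{2n+2}w^{-2n-1}$, $\|u\|_n^2\sim K^2 w^{-2n-1}$ and $\|u\|^{2n+2}\sim K^{2n+2}w^{-n-1}$; taking $K$ large and then $w\to 0$ violates the claimed upper bound $E_n(u)\le 2\|u\|_n^2+c_n^+\|u\|^{2n+2}$, whereas at the critical amplitude $h\sim 1/w$ one has $\|u\|^{4n+2}\sim\|u\|_n^2$, confirming that $4n+2$ is the sharp power. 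In other words, the exponent in the lemma as printed is a slip of the paper (consistent with the unspecified exponents $b_n$ in Lemma~\ref{lemme_central}), your proof establishes the correct $4n+2$ version, and nothing downstream is affected, since the paper only ever uses that the defect in $(\ref{apriori_est_on_En})$ is a polynomial in $\|u\|$, whose moments are controlled by $(\ref{estim_esper_visc_moy_lp})$. You should either state and prove the lemma with $\|u\|^{4n+2}$, or verify the exponent $2(j-\Theta)/(2-\Theta)$ explicitly rather than asserting $2n+2$.
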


\begin{nem}\label{lemme_central}
For all $\epsilon>0,$ there is $C_\epsilon>0$ such that for all $u$ in $H^{n+1}(\T)$
\begin{equation*}
E_n'(u,\dy u)\leq (-2+\epsilon)\|u\|_{n+1}^2+C_\epsilon\|u\|(1+\|u\|)^{b_n},
\end{equation*}
where $b_n$ depends only on $n.$
\end{nem}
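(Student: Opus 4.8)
The plan is to isolate the leading quadratic part of $E_n$ and to treat the remainder $R_n$ as a perturbation. Writing $E_n(u)=\|u\|_n^2+R_n(u)$ as in $(\ref{form_energies})$, I first compute the contribution of the principal term exactly. Since $\du(\|u\|_n^2)(u,h)=2\int(\dx^n u)(\dx^n h)$, taking $h=\dy u$ and integrating by parts gives
\begin{equation*}
\du(\|u\|_n^2)(u,\dy u)=2\int(\dx^n u)(\dx^{n+2}u)=-2\int(\dx^{n+1}u)^2=-2\|u\|_{n+1}^2 .
\end{equation*}
Thus the whole problem reduces to establishing the bound
\begin{equation*}
\du R_n(u,\dy u)\leq \epsilon\|u\|_{n+1}^2+C_\epsilon\|u\|(1+\|u\|)^{b_n}.
\end{equation*}

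Next I would exploit the precise structure of $R_n$ recorded in $(\ref{reste_loi})$. Each summand of $R_n$ is of the form $\int p(u)$ with $p\in\mathcal{P}_j$, $j\geq 3$, carrying a prescribed total derivative count $S(p)$ and maximal single-factor count $M(p)$: the cubic terms satisfy $S=2n-1$, while the degree-$j$ terms of the second sum satisfy $S=2n-j+2$ and $M\leq n-1$. Because the direction $\dy u$ is \emph{linear} in $u$, the derivative $\du(\int p(u))(u,\dy u)$ is again a sum of integrals of degree-$j$ monomials in the derivatives of $u$, in each of which the total number of derivatives has increased by exactly $2$. Hence every resulting term has total derivative count $B\leq S+2$, equal to $2n+1$ for the cubic part and to $2n-j+4\leq 2n+1$ for the second sum (using $j\geq 3$). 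The key point is that in all cases $B\leq 2n+1<2(n+1)$, i.e. each term is \emph{strictly subcritical} with respect to the energy $\|u\|_{n+1}^2$.

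It then remains to estimate a generic subcritical term $\int\prod_{i=1}^j\dx^{\beta_i}Z_iu$ with $\sum\beta_i=B\leq 2n+1$ and $Z_i\in\{Id,H\}$. The differentiation may raise one factor to order $n+2$ (in the cubic case), so I first integrate by parts to redistribute derivatives and bring every factor down to order at most $n+1$; since the Hilbert transform commutes with $\dx$ and is bounded on every $L^q(\T)$, $1<q<\infty$, it plays no role in the estimates. Applying Hölder together with the Gagliardo--Nirenberg inequalities $\|\dx^{\beta_i}u\|_{L^{q_i}}\lleq\|u\|_{n+1}^{\theta_i}\|u\|^{1-\theta_i}$ (with $\sum 1/q_i=1$) yields a bound $\lleq\|u\|_{n+1}^{\sigma}\|u\|^{j-\sigma}$, where the scaling count forces $\sigma=\sum_i\theta_i=(B+j/2-1)/(n+1)$. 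The subcriticality $B\leq 2n+1$ gives $\sigma<2$, and since $j\geq 3$ the $L^2$-power $j-\sigma$ is strictly larger than $1$. A single application of Young's inequality then absorbs $\|u\|_{n+1}^\sigma$ into $\epsilon\|u\|_{n+1}^2$ at the cost of a pure power $\|u\|^\gamma$ with $\gamma>1$; summing the finitely many terms and using $\|u\|^\gamma\leq\|u\|(1+\|u\|)^{b_n}$ produces the claimed inequality.

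The main obstacle is the borderline case $B=2n+1$, realised by the cubic terms and by the $j=3$ terms of the second sum, where $\sigma=(2n+3/2)/(n+1)$ sits just below $2$; here one must check that, after integration by parts, the top derivative can genuinely be placed on a single factor of order exactly $n+1$ measured in $L^2$, with all remaining factors carried by $L^{q}$ norms (and, via the embedding $\dot{H}^{1/2^+}\hookrightarrow L^\infty$, by $L^\infty$) controlled by strictly lower homogeneous norms. The derivative count guarantees that no term of homogeneity $2$ in $\|u\|_{n+1}$ can arise, but verifying this uniformly over the many monomials generated by the product rule, while tracking the Hilbert transforms through the integrations by parts, is the delicate bookkeeping at the heart of the proof.
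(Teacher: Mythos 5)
Your proposal is correct and takes essentially the same approach as the paper's proof: the exact computation $\du(\|u\|_n^2)(u,\dy u)=-2\|u\|_{n+1}^2$, removal of the Hilbert transform via its $H^s$ and $L^p$ boundedness, an integration by parts to reduce the cubic term's top order from $n+2$ to $n+1$, and H\"older plus interpolation estimates giving a total power $\sigma<2$ of $\|u\|_{n+1}$ that Young's inequality absorbs into $\epsilon\|u\|_{n+1}^2$ --- indeed your scaling count $\sigma=(B+j/2-1)/(n+1)$ reproduces exactly the paper's exponent $d_1=\frac{4n+3}{2(n+1)}$ for the cubic term and its sums $\sum_i\lambda_i/(n+1)$ for the terms of degree $j$. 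One cosmetic caveat: the one-line inference that $B\leq 2n+1$ alone forces $\sigma<2$ is not valid for arbitrary pairs $(B,j)$, but it holds in every case actually present because of the coupling $B=2n-j+4$ (larger $j$ means smaller $B$), which you record, and which is precisely what the paper checks in its two cases $\max_i\alpha_i\leq n$ and $\alpha_1=n+1$.
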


\begin{rmq}\label{remark_nouvelles_lois}
 Since the $L^2$-norm is preserved by $(\ref{BO})$ we can deduce from $(\ref{apriori_est_on_En})$ and the arguments of the proof of Lemma $\ref{lemme_central}$, by adding appropriate polynomials of $\|u\|$, new conservation laws $E_n^*(u)$ and $\tilde{E}_n(u)$ satisfying
\begin{align*}
0\leq \|u\|_n^2 &\leq E_n^*(u),\\
0\leq \|u\|_n^2 &\leq \tilde{E}_n'(u,u).
\end{align*} 
\end{rmq}

Inequalities $(\ref{apriori_est_on_En})$ can be established using arguments similar to those of the proof of Lemma $\ref{lemme_central}$.

\begin{proof}[Proof of Lemma $\ref{lemme_central}$]
Taking into account of the properties of the Hilbert transform such as continuity on $H^s$ and $L^p$ ($s\geq 0$, $p\in]1,\infty[$), we can neglect its effect for our purpose and just consider the functions 
\begin{align*}
R_n^1(u) &=\int u\partial_x^{n-1}u\partial_x^nu,\\
R_n^{2,j}(u) &=\int \prod_{i=1}^j\partial_x^{\alpha_i}u,\ \ \ j=3,...,2n+2, \ \ \sum_{i=1}^j\alpha_i=2n+2-j.
\end{align*}
Here $R_n^1(u)$ corresponds to the first term of $(\ref{reste_loi})$ and the second term of $(\ref{reste_loi})$ can be estimated considering the quantities $R_n^{2,j}(u)$.
Set 
\begin{equation*}
R_n^0=\|u\|_n^2.
\end{equation*}

\paragraph{Estimates concerning $R_n^0$:}
\begin{equation}
\partial_u R_n^{0}(u,\dy u)=-2\|u\|_{n+1}^2.\label{estim_R_n^0}
\end{equation}
\paragraph{Estimates concerning $R_n^1$:}
\begin{align*}
\partial_u R_n^{1}(u,\dy u) &=\int \partial_x^2u\partial_x^{n-1}u\partial_x^{n}u+\int u\partial_x^{n+1}u\partial_x^nu+\int u\partial_x^{n-1}u\partial_x^{n+2}u\\
&=\int \partial_x^2u\partial_x^{n-1}u\partial_x^{n}u-\int\dx u\partial_x^{n-1}u\partial_x^{n+1}u\\
&=2\int \partial_x^2u\partial_x^{n-1}u\partial_x^{n}u+\int\dx u(\partial_x^{n}u)^2
=I+II.
\end{align*}
Let $\gamma_i,\ i=1,2,3$ be three positive numbers satisfying $\sum_{i=1}^3\frac{1}{\gamma_i}=1,$ we apply the generalised Hölder formula with them to find
\begin{align*}
|I|\leq\|\partial_x^2u\|_{L^{\gamma_1}}\|\partial_x^{n-1}u\|_{L^{\gamma_2}}\|\partial_x^{n}u\|_{L^{\gamma_3}}.
\end{align*}
By the embedding inequality $\|.\|_{L^{\gamma_i}}\lleq \|.\|_{1/2-1/\gamma_i}$, we get
\begin{equation*}
|I|\lleq\|u\|_{5/2-1/\gamma_i}\|u\|_{-1/2-1/\gamma_i+n}\|u\|_{1/2-1/\gamma_i+n}.
\end{equation*}
Now interpolate between $L^2$ and $H^{n+1}$ to find
\begin{align*}
|I|\leq C_1\|u\|^{d_1}_{n+1}\|u\|^{3-d_1}
\end{align*}
where
\begin{align*}
d_1=\frac{2n+3}{2(n+1)}<2.\ \
\end{align*}
One can establish the same control (with same $d_1$) for $|II|$ by remarking that
\begin{equation*}
|II|\lleq \|u\|_{1}\|\partial_x^{n}u\|_{L^4}^2\lleq \|u\|_{1}\|u\|_{n+1/4}^2\lleq \|u\|_{n+1}^{\frac{(n+1-1)+2(n+1-n-1/4)}{n+1}}\|u\|^{c}.
\end{equation*}
Then for suitable $b_1$
\begin{equation}
|\du R_n^1(u,\dy u)|\leq \epsilon\|u\|_{n+1}^2+C^1_\epsilon\|u\|^{b_1}.\label{estim_R_n^1}
\end{equation}
\paragraph{Estimates concerning $R_n^{2,j}$:}
\begin{equation*}
\du R_n^{2,j}(u,\dy u)=\int \prod_{i=1}^j\partial_x^{\alpha_i}u\ \ \ \ j=3,...,2n+2,
\end{equation*}
where $\sum_{i=1}^j\alpha_i=2n-j+4$ and $\max_{1\leq i\leq j}\alpha_i\leq n+1.$

We follow two complementary cases:
\begin{itemize}
\item Case $1$: $\max_{1\leq i\leq j}\alpha_i\leq n$. Let $(\gamma_i)$ be  $j$ real numbers such that $\sum_{i=1}^j \frac{1}{\gamma_i}=1$. Then the generalised Hölder formula combined with usual interpolation inequalities shows:
\begin{equation*}
|\du R_n^{2,j}(u,\dy u)|\leq C\prod_{i=1}^j\|u\|_{\kappa_i},
\end{equation*}
where $\kappa_i=\frac{1}{2}-\frac{1}{\gamma_i}+\alpha_i.$
Then
\begin{align*}
|\du R_n^{2,j}(u,\dy u)|\leq C\prod_{i=1}^j\|u\|^{\frac{n+1-\kappa_i}{n+1}}\|u\|^{\frac{\kappa_i}{n+1}}_{n+1}.
\end{align*}
We remark now that 
$$\sum_{i=1}^j\kappa_i=\sum_1^j\left(\frac{1}{2}-\frac{1}{\gamma_i}+\alpha_i\right)=2n+3-\frac{j}{2}.$$ Then 
$$\sum_{i=1}^j\frac{\kappa_i}{n+1}=\frac{2n+3-\frac{j}{2}}{n+1}<2.$$ Thus for suitable $b_2,$
\begin{equation*}
|\du R_n^{2,j}(u,\dy u)|\leq \epsilon\|u\|_{n+1}^2+C^2_\epsilon\|u\|^{b_2}.
\end{equation*}
\item Case $2:$ $\alpha_1=n+1$. Then $\sum_{i=2}^j\alpha_i=n-j+3\leq n,$ and we have
\begin{equation*}
|\du R_n^{2,j}(u,\dy u)|\leq \|u\|_{n+1}\left(\int\prod_{i=2}^j|\partial_x^{\alpha_i}u|^2\right)^\frac{1}{2}.
\end{equation*}
Take again $(\gamma_i)$ such that $\sum_{i=2}^j\frac{1}{\gamma_i}=1$. Then
\begin{align*}
|\du R_n^{2,j}(u,\dy u)|&\leq \|u\|_{n+1}\prod_{i=2}^j\|\partial_x^{\alpha_i}u\|_{L^{2\gamma_i}}\\
&\leq \|u\|_{n+1}\prod_{i=2}^j\|u\|_{\kappa_i}, \ \ \ \kappa_i=\frac{1}{2}-\frac{1}{2\gamma_i}+\alpha_i,\\
&\leq\|u\|_{n+1}\prod_{i=2}^j\|u\|^{\frac{n+1-\kappa_i}{n+1}}\|u\|_{n+1}^{\frac{\kappa_i}{n+1}}.
\end{align*}
Since $\sum_{i=2}^j\kappa_i= n+2-\frac{j}{2}\leq n+\frac{1}{2}$, we have $\frac{1}{n+1}\sum_{i=2}^j\kappa_i<1$ and the existence of a suitable $b_3$ such that
\begin{equation}
|\du R_n^{2,j}(u,\dy u)|\leq \epsilon\|u\|_{n+1}^2+C^3_\epsilon\|u\|^{b_3}.\label{estim_r_n_2,j}
\end{equation}
\end{itemize}
Combining $(\ref{estim_R_n^0}),(\ref{estim_R_n^1})$ and $(\ref{estim_r_n_2,j})$ with a good choice of $\epsilon$, we have the claim.
\end{proof}

\section{IVP of the stochastic BOB equation}\label{Pgwp}
Consider the initial value problem concerning the stochastic BOB equation $(\ref{BOBf})$

\begin{align}\label{equ}
\left\{\begin{array}{l r c}
\dt u+H\dy u+u\dx u=\alpha\dy u+\sqrt{\alpha}\eta \ \ \ t>0,  \\
 u|_{t=0}=u_0.
\end{array}
\right.
\end{align}
Recall that, for $s\geq 0$,
\begin{equation*}
A_s=\sum_{m\in\Z}m^{2s}\lambda_m^2,
\end{equation*}
these quantities measure the regularity in space of the noise. Namely,
$$A_s<+\infty \ \Leftrightarrow \ \eta(t,.)\in \dot{H}^s.$$

\subsection*{Stochastic wellposedness, well-structuredness}
\begin{prop}\label{wellposedness}
Let $s\geq 2$ be an integer. Suppose $A_{s}$ is finite. Then the problem  $(\ref{equ})$ is stochastically globally wellposed in $\dot{H}^s(\T)$  in the sense of Definition $\ref{intro_SGWP}.$
\end{prop}

In order to prove the existence result in Proposition $\ref{wellposedness}$, we split the problem $(\ref{equ})$ as follows:

\begin{itemize}
\item A linear stochastic problem:
\begin{align}\label{equ_lin_stoc}
\left\{\begin{array}{l r c}
\dt z_\alpha+H\dy z_\alpha=\alpha\dy z_\alpha+\sqrt{\alpha}\eta \ \ \ t>0,  \\
 z_{\alpha}|_{t=0}=0.
\end{array}
\right.
\end{align}
\item A nonlinear deterministic problem:
\begin{align}\label{equ_nonlin}
\left\{\begin{array}{l r c}
\dt v+H\dy v+(v+z_\alpha)\dx (v+z_\alpha)=\alpha\dy v \ \ \ t>0,  \\
 v|_{t=0}=u_0.
\end{array}
\right.
\end{align}
Here $z_\alpha$ is a realization of a solution of $(\ref{equ_lin_stoc}).$
\end{itemize}

For $z_\alpha$ and $v$ respective solutions of $(\ref{equ_lin_stoc})$ and $(\ref{equ_nonlin})$, it is easy to see that $u=v+z_\alpha$ is a solution of $(\ref{equ}).$ The linear problem $(\ref{equ_lin_stoc})$ is solved by the stochastic convolution (see section $\ref{intro_section_convol_stoch_et_Ito_lemma}$):
\begin{equation}\label{sol_lin_pbm_z}
z_\alpha(t)=\sqrt{\alpha}\int_0^te^{-(t-s)(H-\alpha)\dy}d\zeta(s)=:\sqrt{\alpha}z(t).
\end{equation}
Remark that, as defined, the function $z$ still depends on $\alpha$. But all its Sobolev norms are uniformly controlled with respect to $\alpha$, this justifies that abuse of notation.\\
If for some $s\geq 0$  $A_s$ is finite, then we have for all $T>0$
\begin{equation}\label{classe_lin_pbm}
z\in \Lambda_T(s):=C([0,T],\dot{H}^s(\T))\cap L^2([0,T],\dot{H}^{s+1}(\T)) \ \ \ \text{for $\Bbb P-a.e.\ \ \omega\in\Omega.$}
\end{equation}
Uniqueness of solution for the problem $(\ref{equ_lin_stoc})$ is obtained by standard arguments. Moreover
if we suppose $A_n$ finite, we can apply the Itô formula to the $\dot{H}^n-$norms (which are preserved by the linear Benjamin-Ono) to find that
\begin{equation}
\E\|z_\alpha\|_n^2+2\alpha\int_0^t\E\|z_\alpha\|_{n+1}^2ds=\alpha A_nt.\label{ito_sur_linear2}
\end{equation} 
Denoting by $z^m$ the projection $(z,e_m)$, we have that
\begin{align*}
z^m(t)=\lambda_m\int_0^te^{m^2(t-s)(\i\text{sgn}(m)-\alpha)}d\beta_m(s).
\end{align*}
Since the function $s\to e^{m^2(t-s)(\i\text{sgn}(m)-\alpha)}$ is $C^1$, we employ a usual (stochastic) integration by parts formula to obtain
\begin{align*}
z^m(t)=\lambda_m\beta_m(t)+m^2(\i\text{sgn}(m)-\alpha)\lambda_m\int_0^te^{m^2(t-s)(\i\text{sgn}(m)-\alpha)}\beta(s)sds.
\end{align*}
Then we arrive at
\begin{align*}
\sup_{t\in[0,T]}|z^m(t)|^2\leq 2\lambda_m^2[1+(1-\alpha)^2m^4T^2]\sup_{t\in[0,T]}|\beta_m(t)|^2\leq 2\lambda_m^2 [1+m^4T^2]\sup_{t\in[0,T]}|\beta_m(t)|^2
\end{align*}
After summing in $m$, we arrive at
\begin{equation*}
\sup_{t\in[0,T]}\|z(t)\|^2\lleq_T \sup_{t\in[0,T]}\|\zeta(t)\|_2^2.
\end{equation*}
More generally, for any $m$ such that $A_{m+2}$ is finite, we have
\begin{equation*}
\sup_{t\in[0,T]}\|z(t)\|^2_m\lleq_T \sup_{t\in[0,T]}\|\zeta(t)\|_{m+2}^2,
\end{equation*}
and finally
\begin{equation}\label{estim_sup_z}
\sup_{t\in[0,T]}\|z_\alpha(t)\|^2_m\lleq_T \alpha\sup_{t\in[0,T]}\|\zeta(t)\|_{m+2}^2.
\end{equation}
\begin{prop}\label{existence_equ_nonlin}
Let $s\geq 2$ be an integer, and suppose $A_s<\infty$. Let $u_0$ be a random variable in $\dot{H}^s(\T)$ independent of $\mathcal{F}_t$. Then for any $T>0$, for $a.e$ $\omega$, the nonlinear problem $(\ref{equ_nonlin})$ associated to $u_0$ admits a solution in $\Lambda_T(s).$ Moreover the process solution is adapted to $\sigma(u_0,\mathcal{F}_t)$.
\end{prop}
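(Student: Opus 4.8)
The plan is to fix $\omega$ and read $(\ref{equ_nonlin})$ as a deterministic viscous problem driven by the prescribed inhomogeneity $z_\alpha$, which by $(\ref{classe_lin_pbm})$ belongs to $C(0,T;\dot{H}^s)\cap L^2(0,T;\dot{H}^{s+1})$ for $\P$-a.e. $\omega$. Expanding $(v+z_\alpha)\dx(v+z_\alpha)=v\dx v+\dx(vz_\alpha)+z_\alpha\dx z_\alpha$ and writing $S_\alpha(t)=e^{-t(H-\alpha)\dy}$ for the propagator of the linear part, a solution is a fixed point of
\[
v(t)=S_\alpha(t)u_0-\int_0^t S_\alpha(t-\tau)\Big(v\dx v+\dx(vz_\alpha)+z_\alpha\dx z_\alpha\Big)(\tau)\,d\tau .
\]
The analytic engine is the smoothing estimate $\|S_\alpha(t)f\|_{s+1}\lleq(\alpha t)^{-1/2}\|f\|_s$, which holds because the Fourier symbol of $S_\alpha(t)$ has modulus $e^{-\alpha t m^2}$ (the dispersive factor $e^{-itm|m|}$ being unimodular). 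The gain of one derivative at the time-integrable cost $(\alpha t)^{-1/2}$ is exactly what absorbs the single derivative falling on the quadratic terms.

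First I would establish local existence by a Banach fixed point in a ball of $C(0,T_0;\dot{H}^s)$. Using the smoothing estimate together with the algebra and product bounds in $\dot{H}^s$ (available since $s\ge 2>1/2$) and the a.s. bound $\sup_{[0,T]}\|z_\alpha\|_s<\infty$, the Duhamel map above is a contraction on a short interval $[0,T_0]$, with $T_0$ depending only on $\|u_0\|_s$ and $\sup_{[0,T]}\|z_\alpha\|_s$. The same smoothing also places $S_\alpha(\cdot)u_0$ and the Duhamel term in $L^2(0,T_0;\dot{H}^{s+1})$, so the local solution lies in $\Lambda_{T_0}(s)$. A standard continuation argument then reduces global existence on $[0,T]$ to an a priori bound on $\|v(t)\|_s$.

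The heart of the matter is this a priori estimate, which I would close in two steps. A preliminary $L^2$ estimate (apply $\langle v,\cdot\rangle$) bounds $\sup_{[0,T]}\|v\|$: the Burgers term is energy-neutral, $\langle v,v\dx v\rangle=0$, the dispersive term drops out since $H\dy$ is skew-adjoint on $\dot{H}(\T)$, and the $z_\alpha$-couplings are controlled by $\sup_{[0,T]}\|z_\alpha\|_s$. For the top level I apply $D^s$ and test with $D^s v$: the dispersive term again vanishes, the viscosity yields the favourable dissipation $-\alpha\|v\|_{s+1}^2$, while the Burgers term $\langle D^s v,\,D^s(v\dx v)\rangle$ and the $z_\alpha$-couplings are treated by a Kato–Ponce commutator bound followed by Gagliardo–Nirenberg interpolation. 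As in the proof of Lemma $\ref{lemme_central}$, the point is that after interpolation every nonlinear contribution carries a power of $\|v\|_{s+1}$ strictly below $2$; Young's inequality then lets the dissipation absorb the top order, leaving
\[
\tfrac{d}{dt}\|v\|_s^2+\alpha\|v\|_{s+1}^2\le a(t)\,\|v\|_s^2+b(t),
\]
where $a\in L^\infty(0,T)$ comes from $\|\dx z_\alpha\|_\infty\lleq\|z_\alpha\|_s$ and from the $L^2$-bound on $v$, and $b\in L^1(0,T)$ collects the terms involving $\|z_\alpha\|_{s+1}^2$, which is integrable by $(\ref{classe_lin_pbm})$. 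Gronwall's lemma bounds $\|v(t)\|_s$ and $\alpha\int_0^t\|v\|_{s+1}^2\,ds$ on all of $[0,T]$, giving the global solution in $\Lambda_T(s)$. I expect this closing — controlling the derivative loss in the nonlinearity by the viscous dissipation while keeping the $z_\alpha$-dependence integrable in time — to be the main obstacle; the nonlocality of $H$ is harmless, since $H$ is bounded on every $\dot{H}^s$ and $H\dy$ contributes nothing to the energy.

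Finally, adaptedness is inherited from the construction. The process $z_\alpha$ is $\mathcal{F}_t$-adapted, the Picard iterates defining the fixed point are continuous (hence measurable) functions of $u_0$ and of $z_\alpha|_{[0,t]}$, and a uniform limit of adapted processes is adapted; patching the local solutions together on $[0,T]$ preserves this property, so the global solution is adapted to $\mathcal{F}_t$.
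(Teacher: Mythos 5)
Your proposal is correct in substance but follows a genuinely different route from the paper on both of its main steps, so a comparison is in order. For local existence, the paper runs the fixed point for the same Duhamel map directly in the weighted space $\Lambda_T(s)$, with norm $\sup_t e^{-t/T}\bigl(\|u\|_s^2+\alpha\int_0^t\|u\|_{s+1}^2\bigr)^{1/2}$, and closes the contraction by \emph{energy estimates on $\f w$ itself} (differentiating $\|\f w\|_s^2$ and absorbing the quadratic terms into the dissipation at cost $1/\alpha$); you instead contract in $C(0,T_0;\dot H^s)$ using the parabolic smoothing $\|e^{-t(H-\alpha)\dx^2}f\|_{s+1}\lleq (\alpha t)^{-1/2}\|f\|_s$, which is equally legitimate since the dispersive factor is unimodular. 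One small slip there: the pointwise smoothing bound does \emph{not} by itself place $e^{-t(H-\alpha)\dx^2}u_0$ in $L^2(0,T_0;\dot H^{s+1})$, because $t^{-1/2}\notin L^2$ near $t=0$; you need the parabolic energy identity $\int_0^\infty\|e^{-t(H-\alpha)\dx^2}u_0\|_{s+1}^2\,dt=\|u_0\|_s^2/(2\alpha)$, or simply extract the $L^2_t\dot H^{s+1}$ membership a posteriori from your own differential inequality — an immediate repair within your framework. The more significant divergence is in the global a priori estimate. Your single-tier argument (Kato--Ponce plus Gagliardo--Nirenberg interpolation of \emph{every} nonlinear term, including $\|\dx v\|_{L^\infty}$, against the dissipation $\alpha\|v\|_{s+1}^2$, then Young and Gronwall) does close and proves the proposition as stated, but the Young step necessarily produces constants that blow up as $\alpha\to 0$. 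The paper instead proceeds in two tiers (Lemma \ref{estim_123} and estimate \eqref{estim_s>2}): it first controls $\|v\|_i$ for $i=0,1,2$ via the modified BO conservation laws $E_0^*,E_1^*,E_2^*$, exploiting that $E_n'(v,-H\dy v-v\dx v)=0$ kills the dangerous Burgers contribution outright, which yields bounds \emph{uniform in $\alpha\in(0,1)$}; the resulting $\alpha$-uniform bound on $\|\dx v\|_{L^\infty}$ is then fed into the Kato--Ponce estimate at level $s>2$, so the Gronwall factor stays $\alpha$-uniform. This uniformity is irrelevant for Proposition \ref{existence_equ_nonlin} itself but is exactly what the inviscid-limit arguments of Section 5 (Lemmas \ref{pointwise_conver} and \ref{uniform_conve_equat}) consume, so your route proves the stated proposition while the paper's route is engineered to prove more. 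Your adaptedness argument via measurable dependence of the Picard iterates on $(u_0,z_\alpha|_{[0,t]})$ matches the paper's remark.
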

Proposition $\ref{existence_equ_nonlin}$ is proved combining the two paragraphs below:

\paragraph{A priori estimates.}
The following lemma is proved using the first three integer order (modified) conservation laws $E_n^*(u)$ of the Remark $\ref{remark_nouvelles_lois}$, its proof is presented in the appendix.
\begin{nem}\label{estim_123}
For any $T>0$, for almost any realization of $z$ we have the following a priori estimates for the nonlinear problem $(\ref{equ_nonlin})$
\begin{equation}\label{est_norm_inf2}
\sup_{t\in[0,T]}\|v(t)\|_i^2+\alpha\int_0^T\|v(t)\|_{i+1}^2dt\leq C\left(T,\|u_0\|_i,\|z\|_{L^\infty(0,T;H^i)}\right)\ \ \ \ i=0,1,2,
\end{equation}
where $C$ does not depend on $\alpha\in (0,1).$
\end{nem}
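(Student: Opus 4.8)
The plan is to estimate $v$ through the shifted variable $u=v+z_\alpha$, which carries the full Benjamin--Ono conservation-law structure, by running a \emph{pathwise} energy argument on the modified conserved quantities $E_i^*$ of Remark~\ref{remark_nouvelles_lois} (which satisfy $\|w\|_i^2\le E_i^*(w)$). Since $z_\alpha=\sqrt\alpha\,z$ and $\alpha\in(0,1)$, one has $\|z_\alpha\|_j\le\|z\|_j$ for every $j$, so all occurrences of $z_\alpha$ are controlled uniformly in $\alpha$ by $\|z\|_{L^\infty(0,T;H^i)}$, and bounding $v$ is equivalent to bounding $u$. Because $z_\alpha$ is only a stochastic convolution (its time derivative is not a function), I would differentiate $E_i^*$ along the genuinely differentiable object $v$, which solves the deterministic parabolic equation $\partial_t v=-H\partial_x^2 v-\tfrac12\partial_x(u^2)+\alpha\partial_x^2 v$, treating the $z_\alpha$-dependence of the nonlinearity as a known forcing.

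First I would treat the base case $i=0$: pairing the $v$-equation with $v$ annihilates the dispersive term by skew-adjointness of $H\partial_x^2$, produces the dissipation $-\alpha\|v\|_1^2$, and leaves the nonlinear contribution $\tfrac12\int \partial_x v\,u^2$. Writing $\partial_x v=\partial_x u-\partial_x z_\alpha$ and using $\int \partial_x u\,u^2=0$ reduces this to terms in which, after integration by parts, all surviving derivatives fall on $z_\alpha$; these are bounded by a polynomial in $\|v\|$ and $\|z\|_{L^\infty(0,T;H^i)}$, and Gronwall closes both the $L^2$ bound and the dissipative bound $\alpha\int_0^T\|v\|_1^2$, uniformly in $\alpha$.

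For $i=1,2$ I would differentiate $E_i^*(v)$ and split $\partial_t v$ as (BO vector field) $+$ (viscosity) $+$ (coupling), where the coupling vector field is $v\partial_x v-\tfrac12\partial_x(u^2)=\tfrac12\partial_x(v^2-u^2)=-\partial_x(v z_\alpha)-\tfrac12\partial_x(z_\alpha^2)$. The BO part contributes exactly zero, since $E_i^*$ differs from the conserved $E_i$ only by a polynomial in the $L^2$-norm, which is itself conserved; the viscous part contributes $-c\alpha\|v\|_{i+1}^2+\alpha P_i(\|v\|)$ by the computation underlying Lemma~\ref{lemme_central}; and the coupling is estimated by the same Hölder--interpolation bookkeeping as in that proof, now with the extra derivatives parked on $z_\alpha$. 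Keeping $-c\alpha\|v\|_{i+1}^2$ on the left, integrating in time, and applying Gronwall yields both bounds at level $i$; the lower-order terms $P_i(\|v\|)$ and the sub-top-order pieces of the coupling are controlled by induction from the level-$(i-1)$ (and $i=0$) estimates.

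The main obstacle is the $\alpha$-uniformity. For small $\alpha$ the only top-order smoothing, $-c\alpha\|v\|_{i+1}^2$, is weak, so no genuine top-order norm $\|v\|_{i+1}$ produced by the coupling may be absorbed into it: such an absorption would cost a factor $\alpha^{-1}$ via Young's inequality and destroy the uniformity. The heart of the argument is therefore to show, by repeated integration by parts, that each coupling term can be symmetrized so that $v$ carries at most $i$ derivatives in every $L^2$ pairing --- so that the term is bounded by $\|v\|_i^2$ times a norm of $z_\alpha$, with no surviving $\|v\|_{i+1}$ --- leaving the viscous dissipation as a pure bonus on the left-hand side rather than a resource to be consumed. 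Verifying that this symmetrization succeeds for every monomial appearing in $(E_i^*)'(v)$, for $i=1,2$, is the bulk of the (appendix) computation.
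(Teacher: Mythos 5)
Your overall skeleton (pathwise energy estimates on the modified conservation laws $E_i^*$ evaluated at $v$, vanishing of the BO contribution, viscous dissipation from Lemma~\ref{lemme_central}, Gronwall plus induction on $i$) is exactly the paper's appendix proof. But your diagnosis of the ``main obstacle,'' and the symmetrization program you build around it, contain a genuine gap. The absorption you declare forbidden is in fact free: the coupling terms are not $O(1)$ in $\alpha$ but carry prefactors $\sqrt{\alpha}$ and $\alpha$, because $z_\alpha=\sqrt{\alpha}\,z$, so the coupling vector field is $-\sqrt{\alpha}\,\dx(vz)-\frac{\alpha}{2}\dx(z^2)$. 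Young's inequality then gives $\sqrt{\alpha}\,\|v\|_{i+1}\,Y\leq \epsilon\,\alpha\|v\|_{i+1}^2+C_\epsilon Y^2$ with no factor $\alpha^{-1}$ anywhere; this is precisely how the paper handles the top-order coupling terms, e.g.\ $\sqrt{\alpha}\,|(\dy v,\dy\dx(vz))|\leq C\sqrt{\alpha}\,\|v\|_3\|v\|_2\|z\|_2\leq\epsilon\alpha\|v\|_3^2+C_\epsilon\|v\|_2^2\|z\|_2^2$. By passing to $\|z_\alpha\|_j\leq\|z\|_j$ at the very start, you discard the one structural fact that makes the estimate uniform in $\alpha$, and then manufacture an obstruction that is not there.

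Moreover, the replacement you propose cannot be carried out at the stated regularity of $z$. In the coupling $E_i'(v,\dx(vz_\alpha))$ the monomial $(\partial_x^i v,\; v\,\partial_x^{i+1}z_\alpha)$ occurs, and integration by parts can only trade it, modulo harmless terms, against $(\partial_x^{i+1}v,\; v\,\partial_x^{i}z_\alpha)$: the Leibniz identity $\int\partial_x\bigl(\partial_x^i v\cdot v\cdot\partial_x^i z_\alpha\bigr)=0$ shows that one of the three factors must carry $i+1$ derivatives. So ``parking the extra derivatives on $z_\alpha$'' while keeping at most $i$ derivatives on $v$ forces $\|z\|_{i+1}$ into the bound --- which exceeds the dependence on $\|z\|_{L^\infty(0,T;H^i)}$ claimed in the lemma, and which at threshold noise regularity ($A_i<\infty$) is only available in $L^2_t$ via $(\ref{classe_lin_pbm})$, not in $L^\infty_t$. (One could patch this by a Gronwall argument with the time-integrable coefficient $\|z(t)\|_{i+1}^2$, but that proves a different statement and is not what you wrote.) The correct resolution is the paper's: retain the $\sqrt{\alpha}$, allow one factor $\|v\|_{i+1}$ in each worst term, and absorb it into the dissipation $\alpha\|v\|_{i+1}^2$ at no cost in $\alpha$.
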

Since $H^2(\T)$ is continuously embedded in $C^1(\T)$, we infer
\begin{cor}
For any $T>0$, for almost any realization of $z$, and for any initial datum $u_0\in H^2$, a solution $v$ to $(\ref{equ_nonlin})$ satisfies
\begin{equation}
\sup_{t\in[0,T]}\|\dx v(t)\|_{L^\infty}\leq C\left(T,\|u_0\|_2,\|z\|_{L^\infty(0,T;H^2)}\right),\label{A}
\end{equation}
where $C$ does not depend on $\alpha\in (0,1).$
\end{cor}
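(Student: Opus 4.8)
The plan is to read the Corollary off directly from the $i=2$ case of the a priori bound $(\ref{est_norm_inf2})$, combined with the one-dimensional Sobolev embedding. First I would recall that on the circle one has $H^2(\T)\hookrightarrow C^1(\T)$, quantitatively $\|\dx w\|_{L^\infty}\lleq\|w\|_{H^2}$ for every $w\in H^2(\T)$. This is simply the embedding $H^1(\T)\hookrightarrow L^\infty(\T)$ (valid in one space dimension, since $H^1$ functions are $\tfrac12$-Hölder) applied to $\dx w\in H^1(\T)$: indeed $\|\dx w\|_{L^\infty}\lleq\|\dx w\|_{H^1}\leq\|w\|_{H^2}$.

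Next I would reconcile the homogeneous seminorm $\|\cdot\|_2$ appearing in $(\ref{est_norm_inf2})$ with the full $H^2$-norm. Because the solution $v(t)$ of $(\ref{equ_nonlin})$ lives in the mean-zero space $\dot{H}(\T)$, the lowest active frequency is $\pm 1$, so the Poincaré inequalities $\|v\|\leq\|v\|_1\leq\|v\|_2$ hold; hence $\|v(t)\|_{H^2}\lleq\|v(t)\|_2$. Combining this with the embedding from the previous step yields $\|\dx v(t)\|_{L^\infty}\lleq\|v(t)\|_2$ for each $t\in[0,T]$.

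Finally I would take the supremum over $t\in[0,T]$ and invoke Lemma $\ref{estim_123}$ in the form $(\ref{est_norm_inf2})$ with $i=2$, giving
\begin{equation*}
\sup_{t\in[0,T]}\|\dx v(t)\|_{L^\infty}\lleq\left(\sup_{t\in[0,T]}\|v(t)\|_2^2\right)^{1/2}\leq C\left(T,\|u_0\|_2,\|z\|_{L^\infty(0,T;H^2)}\right)^{1/2},
\end{equation*}
which is exactly the asserted bound after relabeling the constant. Since the constant furnished by Lemma $\ref{estim_123}$ does not depend on $\alpha\in(0,1)$, and the embedding and Poincaré constants are purely functional-analytic (hence $\alpha$-free), the resulting constant is independent of $\alpha$ as claimed. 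I do not expect any real obstacle here: all the analytic effort has already been spent in proving the uniform-in-$\alpha$ $H^2$ estimate of Lemma $\ref{estim_123}$. The only point deserving a word of caution is that the argument uses the \emph{one-dimensional} embedding $H^2\hookrightarrow C^1$, which would fail in higher dimension, so the reduction to controlling $\|v(t)\|_2$ is genuinely tied to the setting $x\in\T$.
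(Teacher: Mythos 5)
Your argument is exactly the paper's: the authors derive the Corollary from the $i=2$ case of the a priori estimate $(\ref{est_norm_inf2})$ via the continuous embedding $H^2(\T)\hookrightarrow C^1(\T)$, which is precisely your route (you merely make explicit the Poincar\'e step relating the homogeneous seminorm $\|\cdot\|_2$ to the full $H^2$-norm on mean-zero functions, and the $\alpha$-independence bookkeeping). The proof is correct and matches the paper's one-line justification.
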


\begin{nem}
For any $T>0$, for any integer $s>2$, and for almost any realization of $z$ we have the higher order a priori estimates for $(\ref{equ_nonlin})$
\begin{equation}\label{estim_s>2}
\sup_{t\in[0,T]}\|v(t)\|_s^2+\alpha\int_0^T\|v(t)\|_{s+1}^2dt\leq C\left(T,\|u_0\|_s,\|z\|_{L^\infty(0,T;H^s)}\right),
\end{equation}
where $C$ does not depend on $\alpha\in (0,1).$
\end{nem}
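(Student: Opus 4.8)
The plan is to run an $H^s$ energy estimate directly on $(\ref{equ_nonlin})$, setting $w=v+z_\alpha$ so that the nonlinearity is the Burgers-type term $w\dx w$. Applying $D^s$ and pairing with $D^s v$ in $L^2$, I would exploit three structural facts: the dispersive term is skew-adjoint, so $\langle D^s H\dy v,D^s v\rangle=0$; the viscous term has the good sign, $\langle D^s(\alpha\dy v),D^s v\rangle=-\alpha\|v\|_{s+1}^2$; and the only genuine difficulty is the nonlinear contribution $N=\langle D^s(w\dx w),D^s v\rangle$. This gives the identity
\begin{equation*}
\tfrac12\tfrac{d}{dt}\|v\|_s^2+\alpha\|v\|_{s+1}^2=-N.
\end{equation*}
Since for fixed $\alpha>0$ the equation is semilinear parabolic, I would first carry out the computation on smooth (e.g. spectrally truncated) approximations, for which every manipulation below is legitimate, and then pass to the limit; the resulting bounds are uniform in the truncation and, crucially, in $\alpha\in(0,1)$.

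To estimate $N$ I would split $D^s(w\dx w)=w\,D^s\dx w+[D^s,w]\dx w$, and in the first piece separate $\dx w=\dx v+\dx z_\alpha$. The $\dx v$ part integrates by parts using that $\dx$ is local,
\begin{equation*}
\int w\,\dx(D^s v)\,D^s v=-\tfrac12\int(\dx w)(D^s v)^2,
\end{equation*}
so it is bounded by $\tfrac12\|\dx w\|_{L^\infty}\|v\|_s^2$ with no loss of derivatives. The $\dx z_\alpha$ part is where the viscosity must be spent: integrating by parts once and recalling $z_\alpha=\sqrt{\alpha}\,z$, the worst contribution is $\int w\,D^s\dx v\,D^s z_\alpha$, which by Young's inequality is $\leq\tfrac{\alpha}{8}\|v\|_{s+1}^2+C\|w\|_{L^\infty}^2\|z\|_s^2$; the $\sqrt{\alpha}$ attached to $z$ exactly supplies the factor needed to absorb this into the dissipation and leaves an $\alpha$-free remainder. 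The commutator piece is handled by the Kato--Ponce estimate,
\begin{equation*}
\|[D^s,w]\dx w\|\lleq\|\dx w\|_{L^\infty}\|w\|_s,
\end{equation*}
whence $\langle[D^s,w]\dx w,D^s v\rangle\lleq\|\dx w\|_{L^\infty}\|w\|_s\|v\|_s$, and $\|w\|_s\leq\|v\|_s+\|z\|_s$ keeps this at the level of $\|v\|_s^2$ plus data.

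Collecting the pieces and absorbing the $\tfrac{\alpha}{8}\|v\|_{s+1}^2$ terms into the viscous term yields a differential inequality of the form
\begin{equation*}
\tfrac{d}{dt}\|v\|_s^2+\alpha\|v\|_{s+1}^2\leq a(t)\|v\|_s^2+b(t),
\end{equation*}
where $a$ and $b$ depend only on $\|w\|_{L^\infty}$, $\|\dx w\|_{L^\infty}$ and $\|z\|_s$. The decisive point is that these coefficients are already under control without any information on $\|v\|_s$: by the corollary $(\ref{A})$ to Lemma $\ref{estim_123}$ and the embedding $H^s\hookrightarrow C^1$ for $s>2$, one has $\|\dx w\|_{L^\infty}\leq\|\dx v\|_{L^\infty}+\|\dx z\|_{L^\infty}$ and $\|w\|_{L^\infty}$ bounded on $[0,T]$ by a constant depending only on $T$, $\|u_0\|_2$ and $\|z\|_{L^\infty(0,T;H^s)}$, all independent of $\alpha$ since $\sqrt{\alpha}<1$. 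Hence $a,b\in L^\infty(0,T)$ with $\alpha$-independent norms, Gronwall's lemma closes the $\sup_t\|v\|_s^2$ bound, and integrating in time then produces the $\alpha\int_0^T\|v\|_{s+1}^2$ term. I expect the main obstacle to be exactly the interplay of the second paragraph: handling the apparent one-derivative loss in $w\dx w$ (resolved by the Burgers cancellation and the commutator estimate) while simultaneously keeping every constant independent of $\alpha$, which forces one to pay the derivatives landing on $z_\alpha$ against the vanishing viscosity through the matching $\sqrt{\alpha}$ factor rather than against any $\alpha$-free quantity.
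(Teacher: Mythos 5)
Your proposal is correct and follows essentially the same route as the paper: an $H^s$ energy estimate exploiting skew-adjointness of $H\dy$, the Kato--Ponce commutator with the Burgers cancellation to avoid the derivative loss, Young's inequality to absorb the terms where $D^s$ lands on $z_\alpha=\sqrt{\alpha}z$ into a fraction of the dissipation $\alpha\|v\|_{s+1}^2$, and Gronwall with the coefficients controlled uniformly in $\alpha$ via the estimate $(\ref{A})$. The only difference is cosmetic bookkeeping (you group the nonlinearity as $w\dx w$ with $w=v+z_\alpha$ before commutating, while the paper keeps the three pieces $v\dx v$, $\dx(vz_\alpha)$, $\tfrac12\dx z_\alpha^2$ separate), and your remark about justifying the computation on truncated approximations is if anything more careful than the paper's formal calculation.
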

Before giving the proof of the estimate $(\ref{estim_s>2})$, let us prove the following commutator estimate:
\begin{nem}
Let $s\geq 3$ be an integer and $v$ be in $H^{s+1}.$ We have
\begin{equation}
\|[\partial_x^s,v]\dx v\|\lleq\|v\|_2\|v\|_{s},\label{notre_kp}
\end{equation}
where $[\partial_x^s,v]\dx v=\partial_x^s(v\dx v)-v\partial_x^s(\dx v).$
\end{nem}
\begin{proof}
By the Leibniz rule we have
\begin{align*}
[\partial_x^s,v]\dx v=\sum_{k=1}^{s}\binom{s}{k}\partial_x^kv\partial_x^{s+1-k}v.
\end{align*}
We separate the above sum into three general terms:
\begin{enumerate}
\item We have $k\in\{1,\ s\}$ if and only if the general term is $\dx v\partial_x^sv$. By using the embedding $H^1\subset L^\infty$, we have the inequality
\begin{align*}
\|\dx v\partial_x^sv\|\leq\|v\|_2\|v\|_{s}.
\end{align*}
\item We have $k\in\{2,\ s-1\}$ if and only if the general term is $\dy v\partial_x^{s-1}v.$ We have (always by $H^1\subset L^\infty$)
\begin{align*}
\|\dy v\partial_x^{s-1}v\|\leq\|v\|_2\|v\|_{s}.
\end{align*}
\item When $s\geq 5$ we have the last situation which is $3\leq k\leq s-2$, we have then $3\leq s+1-k\leq s-2$ as well. We estimate the corresponding general term as follows
\begin{align*}
\|\partial_x^kv\partial_x^{s+1-k}\|\leq\|v\|_{k+1}\|v\|_{s+1-k}\lleq \|v\|_2^{\frac{s-k-1}{s-2}}\|v\|_s^{\frac{k-1}{s-2}}\|v\|_2^{\frac{k-1}{s-2}}\|v\|_s^{\frac{s-k-1}{s-2}}=\|v\|_2\|v\|_s.
\end{align*}
\end{enumerate}
We complete the proof after taking a weighted sum of these terms.
\end{proof}

\begin{proof}[Proof of the estimate $(\ref{estim_s>2})$]
We recall the non-linear equation satisfied by $v$:
\begin{equation*}
\dt v+H\dy v-\alpha\dy v=-v\dx v-\dx (vz_\alpha)-\frac{1}{2}\dx z_\alpha^2.
\end{equation*}
Then for an integer $s>2$, we have
\begin{align*}
(\partial_x^sv,\partial_x^s\dt v)+\alpha(\partial_x^{s+1}v,\partial_x^{s+1}v)= -(\partial_x^sv,\partial_x^s(v\dx v))\underbrace{-(\partial_x^sv,\partial_x^{s+1}(vz_\alpha))}_{=+(\partial_x^{s+1}v,\partial_x^{s}(vz_\alpha))}+\frac{1}{2}(\partial_x^{s+1}v,\partial_x^sz_\alpha^2).
\end{align*}
Therefore
\begin{align*}
\frac{1}{2}\dt\|v\|_s^2+\alpha\|v\|_{s+1}^2= I+II+III.
\end{align*}
Using the commutator estimate $(\ref{notre_kp})$ and the algebra structure of $H^s(\T)$, we have
\begin{align*}
|I| &=|(\partial_x^sv,\partial_x^s(v\dx v)-v\partial_x^s\dx v)+(\partial_x^sv,v\partial_x^s\dx v)|\\
&=|(\partial_x^sv,[\partial_x^s,v]\dx v)-\frac{1}{2}(\dx v,|\partial_x^sv|^2)|\\
&\lleq \|v\|_s^2\|v\|_2.
\end{align*}
By Cauchy-Schwarz and the algebra structure of $H^s$, we have
\begin{equation*}
|II|+|III|\leq \frac{\alpha}{2}\|v\|_{s+1}^2+C_1\|v\|_s^2\|z\|_s^2+C_2\alpha\|z\|_s^4,
\end{equation*}
where $C_1$ and $C_2$ depend only on $s$.
It remains to combine the Gronwall lemma with $(\ref{est_norm_inf2})$  to get the claim. 
\end{proof}
\paragraph{Local and global existence for the nonlinear problem $(\ref{equ_nonlin})$.}

Let $s\geq 2$. For a positive $T$ the space $\Lambda_T(s)$ is endowed with the norm defined by
\begin{equation}
\|u\|_{\Lambda_T(s)}=\sup_{t\in[0,T]}\left(e^{-\frac{t}{T}}\left\{\|u(t)\|_s^2+\alpha\int_0^t\|u(r)\|_{s+1}^2dr\right\}\right)^{\frac{1}{2}}.\label{norm_def}
\end{equation}
Let $R>0$, denote by $B_R$ the ball in $H^s$ of center $0$ and radius $R$.
\begin{rmq}
The factor $e^{-\frac{t}{T}}$ in $(\ref{norm_def})$ is introduced just for convenience in the computations. The norm defined in $(\ref{norm_def})$ is actually equivalent to the one without that factor.
\end{rmq}

\begin{prop}\label{local_exist_nonlinear}
Let $s\geq 2$ and $\alpha\in (0,1)$. For all $R>0$, there is $T_R>0$ such that for any $u_0$ in $B_{R/2}$, the nonlinear problem $(\ref{equ_nonlin})$ has a unique solution in $\Lambda_{T_R}(s).$
\end{prop}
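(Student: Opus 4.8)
The plan is to obtain $v$ as the fixed point of a contraction on a closed ball of the Banach space $\Lambda_T(s)$, for $T=T_R$ small, exploiting that the viscous term $\alpha\dy$ turns \eqref{equ_nonlin} into a semilinear parabolic equation whose linear part $-H\dy+\alpha\dy$ generates a smoothing semigroup (in Fourier its symbol is $-ik|k|-\alpha k^2$, so $H\dy$ contributes a purely oscillatory, hence skew-adjoint, factor while $\alpha\dy$ contributes the dissipation). Writing the nonlinearity as $(v+z_\alpha)\dx(v+z_\alpha)=\tfrac12\dx[(v+z_\alpha)^2]$, I would first define, for a fixed realization $z_\alpha\in C(0,T;\dot H^s)$ and a given $w\in\Lambda_T(s)$, the function $v=\mathcal{T}(w)$ as the unique solution of the \emph{linear} problem
\[
\dt v+H\dy v-\alpha\dy v=-\tfrac12\dx[(w+z_\alpha)^2],\qquad v(0)=u_0,
\]
whose solvability in $\Lambda_T(s)$ follows from standard linear parabolic theory (a Faedo--Galerkin approximation in the basis $\{e_n\}$), the source lying in $L^2(0,T;\dot H^{s-1})$ by the algebra property of $H^s$. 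A fixed point of $\mathcal{T}$ then solves \eqref{equ_nonlin}.

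The heart of the matter is the basic energy estimate. Testing the linear equation against $D^{2s}v$ and using that $H\dy$ is skew-adjoint, so $(D^sv,D^sH\dy v)=0$, while $\alpha\dy$ yields the dissipation, and then integrating one derivative by parts, I get
\[
\tfrac12\dt\|v\|_s^2+\alpha\|v\|_{s+1}^2=\tfrac12\big(D^{s+1}v,\ D^s[(w+z_\alpha)^2]\big).
\]
The one place where the derivative loss in the nonlinearity is handled is the absorption of the right-hand side into the dissipation via Young's inequality, $\tfrac12|(D^{s+1}v,D^s[(w+z_\alpha)^2])|\le\tfrac{\alpha}{2}\|v\|_{s+1}^2+\tfrac{C}{\alpha}\|(w+z_\alpha)^2\|_s^2$, after which the algebra property of $H^s(\T)$ ($s\ge2>\tfrac12$) gives $\|(w+z_\alpha)^2\|_s\lesssim\|w+z_\alpha\|_s^2$. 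Integrating in time yields $\|v(t)\|_s^2+\alpha\int_0^t\|v\|_{s+1}^2\le\|u_0\|_s^2+\tfrac{C}{\alpha}\int_0^t\|w+z_\alpha\|_s^4\,ds$; for $\|u_0\|_s\le R/2$ and $\|w\|_{\Lambda_T(s)}\le R$ (whence $\|w(r)\|_s\lesssim R$ on $[0,T]$) this gives $\|\mathcal{T}(w)\|_{\Lambda_T(s)}^2\le R^2/4+C_\alpha\,T\,(R+\|z_\alpha\|_{L^\infty_TH^s})^4$, so the ball of radius $R$ is invariant once $T_R$ is small.

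For the contraction I would run the same computation on $V=\mathcal{T}(w_1)-\mathcal{T}(w_2)$, which solves the linear equation with zero data and source $-\tfrac12\dx[(w_1-w_2)(w_1+w_2+2z_\alpha)]$. The identical manipulation plus the algebra estimate gives
\[
\|V(t)\|_s^2+\alpha\int_0^t\|V\|_{s+1}^2\le\tfrac{C}{\alpha}\,(R+\|z_\alpha\|_{L^\infty_TH^s})^2\int_0^t\|w_1-w_2\|_s^2\,ds.
\]
Bounding $\int_0^t\|w_1-w_2\|_s^2\,ds\le\|w_1-w_2\|_{\Lambda_T(s)}^2\int_0^t e^{2r/T}\,dr$ and dividing by the weight $e^{2t/T}$ built into the $\Lambda_T$-norm produces a prefactor $\le C_\alpha\,T\,(R+\|z_\alpha\|_{L^\infty_TH^s})^2$, which is $<1$ after shrinking $T_R$ further. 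The Banach fixed point theorem then yields the unique solution in the ball, and uniqueness in all of $\Lambda_{T_R}(s)$ follows from the same difference estimate applied to two arbitrary solutions.

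I expect the main obstacle to be precisely this derivative loss in $\dx[(v+z_\alpha)^2]$: the scheme closes only because the dissipation $\alpha\|v\|_{s+1}^2$ controls exactly one extra derivative, at the price of constants growing like $1/\alpha$. This is harmless here, since $\alpha\in(0,1)$ is fixed and $T_R$ is allowed to depend on $\alpha$ and on $\|z_\alpha\|_{L^\infty_TH^s}$ (finite for a.e.\ realization by \eqref{classe_lin_pbm}), but it is exactly why the $L^2(0,T;\dot H^{s+1})$ component is built into $\Lambda_T(s)$, and why this short-time contraction degenerates in the inviscid limit $\alpha\to0$ and must be replaced there by the conservation-law a priori bounds of Lemma~\ref{estim_123}.
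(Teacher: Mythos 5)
Your proposal is correct and coincides with the paper's own argument: the same fixed-point map (your $\mathcal{T}$ is exactly the paper's $\mathfrak{F}$ written via Duhamel), the same $H^s$ energy estimate using skew-adjointness of $H\dy$, one integration by parts, Young's inequality absorbing the derivative loss into $\alpha\|\cdot\|_{s+1}^2$ at the cost of $1/\alpha$, the algebra property of $H^s(\T)$, and the exponentially weighted $\Lambda_T(s)$-norm to make both the ball invariance and the contraction hold for small $T$. The only cosmetic difference is that you justify solvability of the linearized problem by Faedo--Galerkin rather than writing the semigroup formula directly, which changes nothing.
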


\begin{rmq}
We combine the local existence of Proposition $\ref{local_exist_nonlinear}$, Lemma $\ref{estim_123}$, and estimate $(\ref{estim_s>2})$ to get the global existence for $(\ref{equ_nonlin}).$
\end{rmq}

\begin{proof}[Proof of Proposition $\ref{local_exist_nonlinear}$]
Let us look for a fixed point of the map
$$\mathfrak{F}v=e^{-t(H-\alpha)\dx^2}u_0-\int_0^te^{-(t-s)(H-\alpha)\dx^2}(z_\alpha+v)\dx(z_\alpha+v)ds.$$
 We proceed as follows:

\begin{itemize}
\item Step 1: We prove that for any $R>0$, there is $T>0$ such that the ball $B_{T,s}$ of $\Lambda_T(s)$ centered at $0$ and of radius $R$ satisfies $\f(B_{T,s})\subset B_{T,s}$ if $\|u_0\|_s\leq R/2$.

\begin{align*}
-\frac{1}{2}\frac{d}{dt}\|\mathfrak{F}v\|_s^2 &=-(\dt D^s\mathfrak{F}v,D^s\f(v))\\
&=-((H-\alpha)D^{s+1}\f(v),D^{s+1}\f(v))+\frac{1}{2}(D^s(z_\alpha+v)^2,D^{s+1}\f(v))\\
&\geq \alpha\|\f(v)\|_{s+1}^2-\frac{1}{2}\|z_\alpha+v\|_{s}^2\|\f(v)\|_{s+1}\\
&\geq \alpha\|\f(v)\|_{s+1}^2-\frac{\alpha}{2}\|\f(v)\|_{s+1}^2-\frac{C}{\alpha}(\|z_\alpha\|_s^4+\|v\|_s^4).
\end{align*}
 Then there is an universal constant $c>0$ such that
 \begin{align*}
\frac{d}{dt}\|\f(v)\|_s^2+\alpha\|\f(v)\|_{s+1}^2 &\leq \frac{c}{\alpha}e^{\frac{2t}{T}}(R^4+\|z_\alpha\|_{\Lambda_T(s)}^4).
\end{align*}
Thus, after integration with respect to $t$, we find
\begin{align*}
\|\f(v)\|_s^2+\alpha\int_0^t\|\f(v)\|_{s+1}^2ds &\leq \|u_0\|_s^2 +\frac{\tilde{c}T}{\alpha}e^{\frac{t}{T}}(R^4+\|z_\alpha\|_{\Lambda_T(s)}^4).
\end{align*}
Multiplying the last relation by $e^{-\frac{t}{T}},$ it remains to choose $T$ small enough so that we obtain the claimed result. 

\item Step 2: We now prove that $\f$ is a contraction on the ball constructed above. We have
$$\dt\mathfrak{F}v=-\{(v+z_\alpha)\dx(v+z_\alpha)+(H-\alpha)\dx^2\mathfrak{F}v\},$$
then for $v_1$ and $v_2$ in $\Lambda_T(s)$, we have
\begin{align*}
-\frac{1}{2}\frac{d}{dt}\|\mathfrak{F}v_1-\mathfrak{F}v_2\|_s^2 &=-(\dt D^s(\mathfrak{F}v_1-\mathfrak{F}v_2),D^s(\mathfrak{F}v_1-\mathfrak{F}v_2))\\
&=(D^s(F_z(v_1)-F_z(v_2)),D^{s+1}(\mathfrak{F}v_1-\mathfrak{F}v_2))\\
&+\alpha\|\mathfrak{F}v_1-\mathfrak{F}v_2\|_{s+1}^2,
\end{align*}
where 
$$F_z(v)=\frac{1}{2}(z_\alpha+v)^2.$$ 
We show easily that
$$\|D^s(F_z(v_1)-F_z(v_2))\|^2\leq C(s)\|v_1-v_2\|_s^2(\|v_1+v_2\|_s^2+\|z_\alpha\|_s^2).$$

This allows us to get that
\begin{align*}
\frac{1}{2}\frac{d}{dt}\|\mathfrak{F}v_1-\mathfrak{F}v_2\|_s^2 &+\frac{\alpha}{2}\|\mathfrak{F}v_1-\mathfrak{F}v_2\|_{s+1}^2\leq \frac{C(s)}{\alpha}\|v_1-v_2\|_s^2(\|v_1+v_2\|_s^2+\|z_\alpha\|_s^2)\\
&\leq e^{\frac{t}{T}}\frac{C(s)(4R^2+\|z\|_{\Lambda_T(s)}^2)}{\alpha}\|v_1-v_2\|_{\Lambda_T(s)}^2.
\end{align*}
After integration in $t$, we find
\begin{align*}
\|\mathfrak{F}v_1-\mathfrak{F}v_2\|_s^2 &+\alpha\int_0^t\|\mathfrak{F}v_1-\mathfrak{F}v_2\|_{s+1}^2ds\leq Te^{\frac{t}{T}}\frac{C(s)(4R^2+\|z_\alpha\|_{\Lambda_T(s)}^2)}{\alpha}\|v_1-v_2\|_{\Lambda_T(s)}^2.
\end{align*}
We multiply this inequality by $e^{-\frac{t}{T}}$, the  $T$ found in the first step can be decreased if necessary to give a contraction.
\end{itemize}
We conclude by using the fixed point theorem.
\end{proof}
\begin{rmq}
By definition, $v$ is $\sigma(u_0,\mathcal{F}_t)$- adapted. Then the process $u=v+z_\alpha$ is continuous and $\sigma(u_0,\mathcal{F}_t)-$ adapted. Thanks to Lemma $\ref{intro_lemme_prog_mes}$, the process $u$ is progressively measurable with respect to that filtration.
\end{rmq}
\begin{proof}[End of the proof of Proposition $\ref{wellposedness}$, the wellposedness of $(\ref{equ})$.]
 Let $u_1$ and $u_2$ be two solutions of $(\ref{BOBf})$ starting respectively at $u_{1,0}$ and $u_{2,0}$, and set $w=u_1-u_2$, then the problem solved by $w$ is
\begin{align*}
\left\{\begin{array}{l r c}
\dt w+(H-\alpha)\dy w+w\dx w+\dx(wu_2)=0,\\
w|_{t=0}=u_{1,0}-u_{2,0}=:w_0.
\end{array}\right.
\end{align*}
Using the arguments of the proof of $(\ref{estim_s>2})$, we show
\begin{align*}
\sup_{t\in[0,T]}\|w(t)\|_s^2+\alpha\int_0^T\|w(r)\|_{s+1}^2dr\leq C(\alpha,T,\|\dx w\|_{L^\infty(0,T;L^\infty)},\|u_2\|_{L^\infty(0,T;H^s)})\|w_0\|_s^2\ .
\end{align*}
Hence follow the uniqueness and the continuity with respect to initial data. 
\end{proof}
The stochastic wellposedness that we just established combined with the estimates $(\ref{est_norm_inf2})$ and $(\ref{estim_s>2})$ implies the following result.
\begin{prop}
Let $j\geq 2$. Suppose $A_j$ finite. Then the equation $(\ref{BOBf})$ is well-structured on the Gelfand triple $(H^{j-1},H^j,H^{j+1})$ in the sense of Definition $\ref{intro_WS}$.
\end{prop}
\subsection*{Probabilistic estimates and Proof of Lemma $\ref{nem_central}$}

\paragraph{Exponential control of the $L^2-$norm.}
\begin{prop}
Let $p\geq1.$ Then the functional $E_0^p(u)=\|u\|^{2p}$ satisfies the conditions of Theorem $\ref{intro_KS_Ito_theorem}$ on the Gelfand triple $(H^{-1},L^2,H^1).$
\end{prop}
\begin{proof}
Thanks to the polynomial nature of $E_0^p(u)$ on $L^2,$ the uniform continuity on bounded sets and the conditions $(\ref{intro_KS_Ito_F1})$ and $(\ref{intro_KS_Ito_F2})$ follow easily. We confine ourself to the proof of $(\ref{intro_KS_Ito_F3}).$ The argument we use, to this end, is the following: As we have already shown, the solution of $(\ref{BOBf})$ can be represented as the sum of a "linear part" and a "nonlinear part." Now we will show that the nonlinear part can be controlled by the initial datum and an "exponential of the averaged linear part". On the other hand, we show that the linear part is exponentially controlled, then we get the needed control on the initial solution $u$. \\
\textbf{Control of the nonlinear part $v$.}
In this part we prove that for all $r,\epsilon >0$ and $p\geq 1$
\begin{equation}\label{controoole}
\|v(r)\|^{2p}\leq e^{f(r,\epsilon,p)}e^{\frac{2\epsilon p}{r}\int_0^r\|\dx z_\alpha\|_{L^\infty}^2ds}\left(\|u_0\|^{2}+\int_0^r\|z_\alpha\|_1^{4}ds\right)^p,
\end{equation}
where $f(r,\epsilon,p)=\frac{p}{4}\left(2r+\frac{r^2}{\epsilon}\right).$ Indeed, multiplying  $(\ref{equ_nonlin})$ by $v$ and integrating in $x$, one obtains
\begin{align*}
\frac{1}{2}\frac{d}{dt}\|v\|^2+\alpha\|v\|_1^2 &=-(v,\dx (vz_\alpha))-(v,z_\alpha\dx z_\alpha)\\
&=\frac{1}{2}[(v,v\dx z_\alpha)+(v,\dx z^2_\alpha)]\\
&\leq \frac{1}{2}[\|v\|\|vz_\alpha\|+\|v\|\|z_\alpha\|_1^2]\\
&\leq \frac{r}{8\epsilon}\|v\|^2+\frac{\epsilon}{r}\|v\|^2\|\dx z_\alpha\|_{L^\infty}^2+\frac{1}{4}\|v\|^2+\frac{1}{4}\|z_\alpha\|_1^4.
\end{align*}
Then we use the Gronwall lemma, choose $t=r$ and take the resulting inequality to the power $p$ to arrive at the claim.\\
\textbf{Exponential control of the linear part.}
Now, the linear part of the solution satisfies the estimate
\begin{equation}\label{control_exp_z}
\E e^{\frac{\epsilon}{t}\int_0^t\|z_\alpha\|_{2}^2ds}\leq 3,
\end{equation}
where $\epsilon>0$ is small enough. Indeed, by applying the Itô formula to $\|z\|_2^{2p}$ for $p\geq 1$  we have
\begin{equation}\label{chap4_estim_power,z}
\E\|z_\alpha\|_2^{2p}\leq \frac{A_1^p p^{p}}{\kappa^{p}}.
\end{equation}
Integrating in $t$, we find
\begin{equation*}
\E\left(\frac{1}{t}\int_0^t\|z_\alpha\|_{2}^{2p}ds\right)\leq \frac{A_1^p p^{p}}{\kappa^{p}}.
\end{equation*}
Thanks to Jensen's inequality, we infer
\begin{equation*}
\E\left(\frac{1}{t}\int_0^t\|z_\alpha\|_{2}^{2}ds\right)^p\leq \frac{A_1^p p^{p}}{\kappa^{p}}.
\end{equation*}
Now, let $0<\epsilon\leq \kappa /(2A_1e),$ then we have
\begin{equation*}
\E\frac{\left(\frac{\epsilon}{t}\int_0^t\|z_\alpha\|_{2}^{2}ds\right)^p}{p!}\leq \frac{p^{p}}{2^{p}e^pp!}.
\end{equation*}
We recall that for any integer $p>0$, we have that $p!\geq\left(\frac{p}{e}\right)^p,$ then we arrive at the claimed result.

\paragraph{Control of the quadratic variation of $E_0^p(u)$.}
We have that
\begin{align*}
\sum_{m\geq 0}a_m^2 &\E\int_0^t|\du (E_0^p)(u,e_m)|^2ds\ \lleq_p \sum_{m\in\Z}a_m^2\E\int_0^t\|u\|^{4(p-1)}|(u,e_m)|^2ds\\
&\ \lleq_p\ \E\int_0^t\|u\|^{4p-2}ds\ \lleq_p\ \E\int_0^t(\|v\|^{4p-2}+\|z_\alpha\|^{4p-2})ds.
\end{align*}
Set $q=4p-2,$ one sees, with the use of the estimate $(\ref{chap4_estim_power,z})$ (or just by invoking the Fernique theorem), that
\begin{equation*}
\E\int_0^t\|z\|_{2}^{q}ds<\infty \ \ \ for\ \ any \ \ t\geq 0.
\end{equation*}
Now we use the estimate $(\ref{controoole})$, then, for any $\epsilon>0,$ 
\begin{equation*}
\E\int_0^t\|v_s\|^{q}ds\leq \int_0^t e^{f(s,\epsilon,q)}\E\left[e^{\frac{\epsilon q}{s}\int_0^s\|\dx z_\alpha\|_{L^\infty}^2dr}\left(\|u_0\|^{2}+\int_0^s\|z_\alpha\|_{L1}^4dr\right)^q\right]ds.
\end{equation*}
Then for any $\delta>0$, we use the Young inequality to find 
\begin{equation*}
\E\int_0^t\|u_s\|^{q}ds\ \lleq \int_0^t e^{f(s,\epsilon,q)}\E\left[e^{\frac{q(1+\delta)\epsilon}{\delta s}\int_0^s\|\dx z_\alpha\|_{L^\infty}^2dr}+\underbrace{\left(\|u_0\|^{2}+\int_0^s\|z_\alpha\|_{1}^4dr\right)^{q(1+\delta)}}_{R_{q,\delta(s)}}\right]ds.
\end{equation*}
One uses the estimate $(\ref{chap4_estim_power,z})$ to bound $\E R_{q,\delta}(s)$
by $C_{q,\delta}(1+s^{q(1+\delta)})$. On the other hand, for any $\delta>0$ we choose $\epsilon>0$ small enough so that one can use the estimate $(\ref{control_exp_z})$ and the embedding $H^2\subset L^\infty$ to get the bound
\begin{equation*}
\E e^{\frac{2p(1+\delta)\epsilon}{\delta s}\int_0^s\|\dx z_\alpha\|_{L^\infty}^2dr}\leq 3.
\end{equation*} 
Then we get
\begin{equation*}
\E\int_0^t\|v_s\|^{2p}ds\ \lleq \int_0^te^{f(s,\epsilon,p)}(1+s^{q(1+\delta)})ds <\infty \ \ \ for \ \ all\ \ t\geq0.
\end{equation*}
\end{proof}

\begin{prop}\label{Ito_on_H}
Let $u$ be the solution of $(\ref{equ})$.
\begin{enumerate}
\item Suppose that $\E E_0(u_0)<\infty,$ then
\begin{equation}\label{itoeo}
\E E_0(u)+2\alpha\int_0^t\E\|u(s)\|_1^2ds=\E E_0(u_0)+\alpha A_0t.
\end{equation}
\item
 Let $p>1$. Suppose that $\Bbb{E}E_0^p(u_0)<\infty$, then 
\begin{equation}\label{itoeomoy_p}
\E E_0^p(u)\leq e^{-p\alpha t}\E E_0^p(u_0)+p^pA_0^p.
\end{equation}
\end{enumerate}
\end{prop}

\begin{proof}
The identity $(\ref{itoeo})$ is easily proven by applying the Itô formula to the conservation law $E_0(u).$ Let us prove $(\ref{itoeomoy_p}):$

For $p>1$, we apply the Itô formula to $E_0^p(u)$ to find
$$dE_0^p(u)=pE_0^{p-1}(u)dE_0(u)+\frac{\alpha p(p-1)}{2}E_0^{p-2}(u)\sum_{m\in\Z}\lambda_m^2|E'_0(u,e_m)|^2dt.$$
Taking the expectation, we get
$$\E E_0^p(u)+\E\int_0^tf_\alpha(u(s))ds= \E E_0^p(u_0),$$
where 
$$f_\alpha(u)=2p\alpha E_0^{p-1}(u)\|u\|_1^2-\alpha pE_0^{p-1}(u)A_0-\frac{\alpha p(p-1)}{2}E_0^{p-2}(u)\sum_{m\in\Z}\lambda_m^2|E'_0(u,e_m)|^2.$$
Let us set
\begin{align*}
Q=pE_0^{p-1}(u)A_0+\frac{p(p-1)}{2}E_0^{p-2}(u)\sum_{m\in\Z}\lambda_m^2|E'_0(u,e_m)|^2.
\end{align*}
Remarking  that 
$$\sum_{m\in\Z}\lambda_m^2|E'_0(u,e_m)|^2\leq 2A_0E_0(u),$$
we get, with the use of the Young inequality, the estimate
$$Q\leq \epsilon E_0^p(u)+\frac{p^{2p}}{\epsilon^{p-1}} A_0^p.$$
On the other hand
$$p\alpha E_0^{p-1}(u)\|u\|_1^2\geq p\alpha E_0^{p}(u).$$
Choosing $\epsilon=p$, we see that
$$\E f_{\alpha}(u)\geq p\alpha \E E_0^p(u)-p^{p+1}A_0^p\alpha.$$
Then
$$\E E_0^p(u)+p\alpha\int_0^t\E E_0^p(u(s))ds\leq\E E_0^p(u_0)+p^{p+1}A_0^p\alpha t.$$
Gronwall's lemma gives the claimed result.
\end{proof}

\paragraph{Control of higher order Sobolev norms.}
The polynomial nature of the Benjamin-Ono conservation laws $E_j$ allows to establish the following result:
\begin{prop}
Let $j\geq 1$, then the functional $E_j$ satisfies the conditions $(\ref{intro_KS_Ito_F1})$ and $(\ref{intro_KS_Ito_F2})$ of Theorem $\ref{intro_KS_Ito_theorem}$ on the triple $(H^{j-1},H^j,H^{j+1})$.
\end{prop}
In view of this result the "stopping time" version of the Itô formula $(\ref{intro_KS_Ito})$ applies to the functionals $E_j$.
\begin{thm}\label{ito_sur_tous} 
Let $j\geq 1$ be an integer. Suppose $A_j$ is finite. There are $\theta_j>0$, $\gamma_j>0$ such that for any solution $u$ of $(\ref{equ})$ in $H^2$  issued from $u_0\in H^2$ which satisfies $\E E_j(u_0)<\infty$, we have
\begin{align}\label{estimate_sur_loi_entier}
\E E_j(u)+\alpha\int_0^t\E\|u\|_{j+1}^2ds &\leq \E E_j(u_0)+\alpha A_j\left(t+c_j\int_0^t\E\|u\|_j^2ds+\gamma_j\int_0^t\E \|u\|(1+\|u\|)^{\theta_j}ds\right),
\end{align}
where $c_j$ depends only on $j$.
\end{thm}

\begin{proof}

The fact that $E_j(u)$ is preserved by the BO equation translates into
\begin{align*}
\du E_j(u,-H\dy u-u\dx u)=0.
\end{align*}
Setting the Markov time $\tau_n=\inf\{t\geq 0, \|u(t)\|_j>n\}$ and applying the Itô formula $(\ref{intro_KS_Ito}),$ we get
\begin{align*}
E_j(u(t\wedge\tau_n)) =E_j(u_0)&+\alpha\int_0^{t\wedge\tau_n}\left(\du E_j(u,\dy u)+\frac{1}{2}\sum_{m\in\Z}\lambda_m^2 \partial_u^2E_j(u,e_m)\right)ds \\ 
&+\sum_{m\in\Z}\lambda_m\int_0^{t\wedge\tau_n}\partial_uE_j(u,e_m)d\beta_m(s) .
\end{align*}
Then by the Doob optional stopping theorm, Theorem $\ref{intro_Doob}$, we have
\begin{equation*}
\E E_j(u(t\wedge\tau_n))=\E E_j(u_0)+\alpha\E\int_0^{t\wedge\tau_n}\left(\du E_j(u,\dy u)+\frac{1}{2}\sum_{m\in\Z}\lambda_m^2 \partial_u^2E_j(u,e_m)\right)ds. 
\end{equation*}
Using the monotone convergence theorem, we arrive at
\begin{equation*}
\E E_j(u(t))=\E E_j(u_0)+\alpha\E\int_0^{t}\left(\du E_j(u,\dy u)+\frac{1}{2}\sum_{m\in\Z}\lambda_m^2 \partial_u^2E_j(u,e_m)\right)ds. 
\end{equation*}
By Lemma $\ref{lemme_central},$ we have
\begin{align}
\du E_j(u,\dy u)\leq -\|u\|_{j+1}^2+P_j(\|u\|),\label{loc_1:ito_tous}
\end{align}
where $P_j$ is the polynomial of Lemma $\ref{lemme_central}.$
Following the arguments of the proof of Lemma $\ref{lemme_central},$ we establish that
\begin{equation}
|\partial_u^2 E_j(u,e_m)|\leq c_jm^{2j}(\|u\|_j^2+Q_j(\|u\|)),\label{loc_2:ito_tous} 
\end{equation}
where $Q_j(r)=q_jr(1+r)^{k_j},$ $q_j$ and $k_j$ depend only on $j$. 
Then take the expectation and combine $(\ref{loc_1:ito_tous})$ with $(\ref{loc_2:ito_tous})$ to get the claim.
\end{proof}

Now we are able to give the proof of Lemma $\ref{nem_central}$.
\begin{proof}[Proof of Lemma $\ref{nem_central}$]
Let $u$ be a stationary solution to $(\ref{BOBf})$ which satisfies the integrability assumption $(\ref{cond_p})$, and suppose that $A_j$ is finite for any $j$. Recall the estimate
\begin{equation}\label{induc_pp}
\E E_j(u)\leq \E \|u\|_j^2+c_n^+\E\|u\|^{2j+2}.
\end{equation}
Then using the integrability assumption $(\ref{cond_p})$, we see that $\E E_j(u)$ is finite as soon as $\E \|u\|_j^2<\infty.$\\
Note that, by the stationarity of $u$, the estimates $(\ref{estimate_sur_loi_entier})$ become (under the assumption that $\E E_j(u)$ is finite)
\begin{equation}\label{induc_prop}
\E\|u\|_{j+1}^2 \leq  A_j\left[1+c_j\E\|u\|_j^2+\gamma_j\E \|u\|(1+\|u\|)^{\theta_j}\right]
\end{equation}
since the distribution do not depend on $t$. We are going to argue by induction. Note that the needed induction property is given by the combination of $(\ref{induc_prop})$ and $(\ref{induc_pp})$ because they give at the same time the finiteness of $\E E_j(u)$ and the control of $\E \|u\|_{j+1}^2$ as soon as $\E \|u\|_j^2$ is finite. Moreover if $(\ref{cond_p})$ holds uniformly in $\alpha$ then so does $\E\|u\|_{j+1}^2$ once the control on $\E\|u\|_j^2$ is uniform in $\alpha$. It remains to prove the initial step, namely $\E \|u\|_1^2$ is finite and does not depend on $\alpha$. But using again the integrability assumption at the order $p=2,$ the stationarity of $u$ combined with the estimate $(\ref{itoeo})$ gives
\begin{align*}
\E\|u\|_1^2=\frac{A_0}{2}.
\end{align*}
\end{proof}

\section{Stationary measures for the viscous problem}\label{section_inv_visc}
Consider the stochastic BOB problem $(\ref{BOBf})$ posed on $\dot{H}^2(\T).$ By the estimates $(\ref{itoeo})$, $(\ref{itoeomoy_p})$ and Theorem $\ref{ito_sur_tous}$, we have

\begin{align}
\E E_0(u) +2\alpha\int_0^t\E\|u\|_1^2ds &=\E E_0(u_0)+\alpha A_0t,\nonumber\\
\E E_0^p(u) &\leq e^{-p\alpha t}\E E_0^p(u_0)+C_pA_0^p, \label{secondestimate}\\
\E E_1(u) +\alpha\int_0^t\E\|u\|_2^2ds &\leq \E E_1(u_0)+\alpha\left(A_1t+c_1\int_0^t\E\|u\|_1^2ds+\int_0^t\E W_1(\|u\|)ds\right),\nonumber\\
\E E_2(u) +\alpha\int_0^t\E\|u\|_3^2ds &\leq \E E_2(u_0)+\alpha\left(A_2t+c_2\int_0^t\E\|u\|_2^2ds+\int_0^t\E W_2(\|u\|)ds\right),\nonumber
\end{align}
where $W_1$ and $W_2$ are the polynomials used in the estimate $(\ref{estimate_sur_loi_entier})$, their expectation is controlled using $(\ref{secondestimate})$. Now suppose $u_0=0$ almost surely, then by an induction argument, we get

\begin{equation*}
\E E_2(u)+\alpha\int_0^t\E\|u\|_3^2ds\leq \alpha Ct,
\end{equation*}
where $C$ is universal. Now in view of Remark $\ref{remark_nouvelles_lois}$, we can suppose $E_n(u)\geq0$ (indeed, adding $c\|u\|^6$ to $E_2(u)$ we find a similar estimate). Then
\begin{equation}
\frac{1}{t}\int_0^t\E\|u\|_3^2ds\leq C,\label{moy_en_t_BK}
\end{equation}
where $C$ is, in particular, independent of $t$. Denote by $\lambda_\alpha(t)$ the law of the solution $u(t)$ to $(\ref{BOBf})$ starting at $0$, and consider the time average
\begin{equation*}
\bar{\lambda}_\alpha(t)=\frac{1}{t}\int_0^t\lambda_\alpha(s)ds.
\end{equation*}
Using the estimate $(\ref{moy_en_t_BK})$, we show 
\begin{equation}\label{locc}
\int_{H^2}\|u\|_3^2\bar{\lambda}_\alpha(t)(du)\leq C.
\end{equation}
Then by the Chebyshev inequality we have
\begin{align*}
\bar{\lambda}_\alpha(t)(\{\|u\|_3>R\})\leq \frac{C}{R^2} \ \ \ for\ any \ R>0.
\end{align*}
Thus the compactness of the embedding $H^3(\T)\subset H^2(\T)$ combined with the Prokhorov theorem
implies that the family $\{\lambda_\alpha(t),\ t>0\}$ is compact with respect to the weak topology of $H^2.$ Then for any $\alpha$ we denote by $\mu_\alpha$ an accumulation point at infinity of the above family. The classical Bogoliubov-Krylov argument implies that $\mu_\alpha$ is a stationary measure for $(\ref{BOBf})$. Passing to the limit $t\to\infty$ in $(\ref{locc})$ (using an approximation argument), we see that $\mu_\alpha(H^3)=1$ for any $\alpha.$ We summarize these results in the following statement:
\begin{prop}
For any $\alpha\in (0,1)$, the stochastic BOB equation $(\ref{BOBf})$ posed in $H^2(\T)$ has a stationary measure $\mu_\alpha$ concentrated on $H^3(\T).$
\end{prop}
\begin{thm}\label{thm_reg_n,p_alpha}
Let $\alpha\in (0,1).$ Suppose that $A_n$ is finite for any $n.$ Then any stationary measure $\mu_\alpha$ of the problem $(\ref{BOBf})$ posed in $\dot{H}^2(\T)$ satisfies
\begin{align}
\int_{H^2(\T)}\|u\|_1^2\mu_\alpha(du) &=\frac{A_0}{2}, \label{ident_esp_visc_h1}\\
\int_{H^2(\T)}\|u\|^{2p} \mu_\alpha(du)&\leq p^p A_0^p \label{estim_esper_visc_moy_lp}\ \ \ \text{for any $1\leq p<\infty$},\\
\int_{H^2(\T)}\|u\|_{n}^2\mu_\alpha(du) &\leq D_n \ \ \ \text{for any $n\geq 2$} \label{estim_esper_viscn>2}, 
\end{align}
where, for any $n$, $D_n$ does not depend on $(t,\alpha)$.
\end{thm}
\begin{proof}
It suffices to prove $(\ref{estim_esper_visc_moy_lp})$ since, then the estimate $(\ref{estim_esper_viscn>2})$ follows from Lemma $\ref{nem_central}.$ We combine $(\ref{itoeo})$ and the stationarity of $u$ to get $(\ref{ident_esp_visc_h1}).$ Let us prove $(\ref{estim_esper_visc_moy_lp})$.  \\
For this end, let $R>0,$ consider a $C^\infty$-function $\chi_R$ 
satisfying
\begin{equation*}
\chi_R(u)=\left\{\begin{array}{l r c}
1,\ \ \ \text{if $\|u\|_2\leq R,$}\\ 0,\ \ \ \text{if $\|u\|_2> R+1.$}
\end{array}
\right.
\end{equation*}

Let $p\geq 1,$ we have
\begin{equation}
\int_{H^2}E_0^p(u)\chi_R(u)\mu_\alpha(du)=\int_{H^2}\E\{E_0^p(u(t,v))\chi_R(u(t,v))\}\mu_\alpha(dv),\label{pour_finitude}
\end{equation}
where $u(.,v)$ is the solution of $(\ref{BOBf})$ starting at $v$. We pass to the limit $t\to\infty$ in the right hand side of $(\ref{pour_finitude})$ and using $(\ref{itoeomoy_p})$ ($u$ is in the ball of size $R$) and the stationarity of $\mu_\alpha$, we find
\begin{equation*}
\int_{H^2}E_0^p(u)\chi_R(u)\mu_\alpha(du)\leq p^pA_0^p.
\end{equation*}
Now Fatou's Lemma allows to conclude.
\end{proof}

\begin{cor}\label{cor_tt_mes_est_reg}
Let $\alpha\in (0,1)$. Suppose $A_n<\infty$ for any $n.$ Then any stationary measure $\mu_\alpha$ for  the stochastic BOB problem $(\ref{BOBf})$ posed in $\dot{H}^2(\T)$ is concentrated on $C^\infty(\T).$
\end{cor}
\begin{proof} Let $n>2$.
Combining the estimate $(\ref{estim_esper_viscn>2})$ and the Chebyshev inequality we find
\begin{equation*}
\mu_\alpha(\{u\in H^2:\ \ \|u\|_n\geq R\})\leq \frac{D_n}{R^2}.
\end{equation*}
Setting $B_n(0,R)$ the ball in $H^n$ of center $0$ and radius $R$, we have
\begin{align*}
\int_{H^2}\Bbb 1_{B_n(0,R)}(u)\mu_\alpha(du)=\mu_\alpha(B_n(0,R))\geq 1-\frac{D_n}{R^2}.
\end{align*}
Passing to the limit on $R$ (with the use of the Lebesgue convergence theorem), we get
\begin{equation*}
\mu_\alpha(H^n(\T))=1.
\end{equation*}
Thus
\begin{equation*}
1=\mu_\alpha(\cap_{n>2}H^n(\T))=\mu_\alpha(C^\infty(\T)).
\end{equation*}
\end{proof}
\section{Invariant measure for the BO equation}

In this section, $S_t:H^3(\T)\rightarrow H^3(\T)$, $t\geq 0,$ denotes the flow of the Benjamin-Ono equation $(\ref{BO}).$ The map $S_{t,\alpha}:H^3\rightarrow H^3$ denotes the one of the stochastic Benjamin-Ono-Burgers equation $(\ref{BOBf})$. We denote by $\phi_t,\ \phi^*_t, \ \phi_{t,\alpha}, \ \phi^*_{t,\alpha}$ the associated Markov semi-groups, respectively. We suppose in what follows that $A_n<\infty$ for any $n>0.$
\subsection*{Some convergence results of stochastic BOB equation to the BO equation}

\begin{nem}\label{pointwise_conver}
For any $T>0$. For any $w\in H^3(\T)$, we have, $\P$-almost surely,
\begin{equation*}
\sup_{t\in[0,T]}\|S_{t,\alpha}w-S_tw\|_2\to 0\ \ \ as\ \ \ \alpha\to 0.
\end{equation*}
\end{nem}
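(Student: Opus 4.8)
The plan is to exploit the pathwise splitting used for the wellposedness of $(\ref{BOBf})$, writing $u_\alpha:=S_{t,\alpha}w=v_\alpha+z_\alpha$, where $z_\alpha$ is the stochastic convolution $(\ref{sol_lin_pbm_z})$ and $v_\alpha$ solves the nonlinear problem $(\ref{equ_nonlin})$ with datum $u_0=w$, and to compare $v_\alpha$ with the deterministic BO solution $u:=S_tw$. First I would fix the datum $w\in\dot H^3(\T)$ and restrict to a set of full $\P$-measure on which, simultaneously, the a priori estimates $(\ref{estim_123})$, its corollary $(\ref{A})$ and $(\ref{estim_s>2})$ hold, and $z_\alpha\to 0$ in $C([0,T];\dot H^n)$ for every $n$ as $\alpha\to 0$. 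The last convergence is the starting point: $(\ref{ito_sur_linear2})$ already gives $\E\|z_\alpha(t)\|_n^2\le \alpha A_n T$, and since we are assuming $A_{n'}<\infty$ for all $n'$, a moment bound on $\sup_{t\le T}\|z_\alpha\|_n$ (factorization or a Kolmogorov-type argument) together with a Borel--Cantelli argument upgrades this to the required almost sure, uniform-in-time statement.

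With $\omega$ fixed, the difference $r:=v_\alpha-u$ satisfies $r|_{t=0}=0$, and subtracting $(\ref{BO})$ from the equation for $v_\alpha$ (recall $(v_\alpha+z_\alpha)\dx(v_\alpha+z_\alpha)=u_\alpha\dx u_\alpha$) while writing $u_\alpha\dx u_\alpha-u\dx u=u_\alpha\dx\rho+\rho\dx u$ with $\rho:=u_\alpha-u=r+z_\alpha$, one obtains
\begin{equation*}
\dt r+H\dy r=\alpha\dy v_\alpha-u_\alpha\dx\rho-\rho\dx u .
\end{equation*}
I would then run an $\dot H^2$ energy estimate: apply $D^2$, pair with $D^2 r$, and use that $H\dy$ is skew-adjoint and commutes with $D^2$ to annihilate the dispersive term. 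The transport contribution $u_\alpha\dx r$ is handled exactly as the term $(1)$ in the proof of $(\ref{estim_s>2})$: the top-order piece $(u_\alpha D^2\dx r,D^2 r)=-\tfrac12(\dx u_\alpha,(D^2 r)^2)$ is bounded by $\|\dx u_\alpha\|_{L^\infty}\|r\|_2^2$, while the commutator $[D^2,u_\alpha]\dx r$ is controlled by the Kato--Ponce estimate and the $\dot H^2$-algebra property. Every remaining term then splits into a part proportional to $\|r\|_2^2$ and a part proportional to $\|z_\alpha\|_3$.

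This produces a differential inequality $\tfrac{d}{dt}\|r\|_2^2\le a_\alpha(t)\|r\|_2^2+b_\alpha(t)$ with $r(0)=0$. The coefficient $a_\alpha(t)\lesssim \|\dx u_\alpha\|_{L^\infty}+\|u_\alpha\|_2+\|u\|_3+1$ is bounded on $[0,T]$ uniformly in $\alpha\in(0,1)$, because the constants in $(\ref{estim_123})$, $(\ref{A})$ and $(\ref{estim_s>2})$ with $s=3$ do not depend on $\alpha$, and because $u\in C([0,T];\dot H^3)\subset C([0,T];C^1)$. The source $b_\alpha(t)$ collects the viscous term, which after one integration by parts equals $-\alpha(D^3 v_\alpha,D^3 r)$ and is thus $\le\alpha\|v_\alpha\|_3(\|v_\alpha\|_3+\|u\|_3)$, together with the $z_\alpha$-terms bounded by $C\|z_\alpha\|_3^2(\|u_\alpha\|_2^2+\|u\|_3^2)$; by the uniform $\dot H^3$-bounds and the first step, $\int_0^T b_\alpha(t)\,dt\to 0$. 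Gronwall then gives $\sup_{t\in[0,T]}\|r(t)\|_2^2\le\big(\int_0^T b_\alpha\big)\exp\big(\int_0^T a_\alpha\big)\to 0$, and since $\sup_t\|z_\alpha\|_2\to 0$ we conclude $\sup_t\|u_\alpha-u\|_2\to 0$.

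The main obstacle is carrying out the $\dot H^2$ estimate for the transport terms without losing a derivative: as in $(\ref{estim_s>2})$, the top-order term must be absorbed by integration by parts and the remainder by the commutator estimate, and—crucially—the Gronwall coefficient $a_\alpha$ must be bounded \emph{uniformly in $\alpha$}, which is precisely what the $\alpha$-independence of the constants in the a priori estimates of Section~3 provides. A secondary technical point, already flagged above, is promoting the expectation bound $(\ref{ito_sur_linear2})$ to the almost sure, uniform-in-time vanishing of $z_\alpha$ that the comparison argument uses throughout.
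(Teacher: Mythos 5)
Your proposal is correct, but the core comparison step is organized differently from the paper's. You use the same decomposition $S_{t,\alpha}w=v_\alpha+z_\alpha$ and the same uniform-in-$\alpha$ a priori estimates of Section~3, but you close a Gronwall inequality directly at the $\dot H^2$ level for $r=v_\alpha-S_tw$, handling the transport term $u_\alpha\dx\rho$ by integration by parts plus the Kato--Ponce commutator, and the term $\rho\dx u$ by the algebra property of $H^2$ --- which is admissible precisely because $w\in H^3$ gives the extra derivative $\dx u\in H^2$, so no derivative is lost. The paper instead avoids all commutator machinery: it runs the energy estimate for $h=v-S_tw$ only in $L^2$ (where the $L^2$-conservation structure and the embedding $H^{\frac32+}\subset C^1$ suffice, yielding $\dt\|h\|^2\lesssim\|h\|^2+C\sqrt{\alpha}$), and then upgrades to $H^2$ via the interpolation $\|h\|_2^3\leq c\|h\|\,\|h\|_3^2$ combined with the uniform $H^3$ bounds from $(\ref{estim_s>2})$ and the $H^3$ conservation law. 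Your route buys a cleaner rate ($\sup_t\|r\|_2=O(\sqrt{\alpha})$, versus the $O(\alpha^{1/6})$ that the paper's interpolation actually yields), at the price of the heavier $H^2$ estimate; the paper's route is more economical and reuses the $L^2$ structure of BO. One small caveat on your first step: a moment bound plus Borel--Cantelli only gives almost sure vanishing of $\sup_{t\leq T}\|z_\alpha\|_n$ along sequences $\alpha_k\to 0$, whereas the paper gets the continuum statement pathwise from the identity $z_\alpha=\sqrt{\alpha}\,z$ and the a.s.\ continuity of $z$; since the lemma is only invoked along subsequences $\alpha_k$ in the invariance argument, this does not affect the application, but as stated the lemma concerns the full limit $\alpha\to 0$, so you should either argue the uniformity in $\alpha$ of the pathwise bound on $z$ or note that sequences suffice.
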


\begin{proof}
We write 
\begin{align*}
\|S_{t,\alpha}w-S_tw\|_2=\|v+z_\alpha-S_tw\|_2\leq\|v-S_tw\|_2+\|z_\alpha\|_2,
\end{align*}
where
\begin{equation*}
z_\alpha(t)=\sqrt{\alpha}\int_0^te^{-(t-s)(H-\alpha)\dy}d\zeta(s)=\sqrt{\alpha}z(t)
\end{equation*}
and $v$ is the solution of
\begin{align}
\dt v &+H\dy v+(v+z_\alpha)\dx (v+z_\alpha)=\alpha\dy v \label{nonlin_equ_2}\\
v_{t=0} &=w.
\end{align}
Thanks to the estimate $(\ref{estim_sup_z})$, we have that $\sup_{t\in[0,T]}\|z_\alpha\|_2=\sqrt{\alpha}\sup_{t\in[0,T]}\|z\|_2$, where the quantity\\ $\sup_{t\in[0,T]}\|z\|_2$ does not depend on $\alpha$. Setting $h=v-S_tw$, we have
\begin{equation*}
\sup_{t\in[0,T]}\|S_{t,\alpha}w-S_tw\|_2\leq \sup_{t\in[0,T]}\|h\|_2+\sqrt{\alpha}\sup_{t\in[0,T]}\|z\|_2.
\end{equation*}
We claim that $\sup_{t\in[0,T]}\|h\|_2=O(\sqrt{\alpha}).$ Indeed using the estimate $(\ref{estim_s>2})$ and the $H^3$-conservation law, we show that

\begin{equation*}
\|h\|_2^3\leq c\|h\|\|h\|_3^{2}\leq C(T,\|w\|_{L^\infty(0,T;H^3)},\|z\|_{H^3})\|h\|.
\end{equation*}
Taking the difference between $(\ref{nonlin_equ_2})$ and the BO equation $(\ref{BO})$, we see that $h$ satisfies
\begin{equation*}
\dt h+H\dy h+h\dx h=-\dx (hS_tw)-\dx(vz_\alpha)-z_\alpha\dx z_\alpha.
\end{equation*}
We multiply the above equation by $h$ and we integrate on $\T$ to get
\begin{align*}
\dt \|h\|^2=\frac{1}{2}(h^2,\dx S_tw)-(h,\dx(vz_\alpha))-\frac{1}{2}(h,\dx z_\alpha^2).
\end{align*}
By the Cauchy-Schwarz inequality and the algebra structure of $H^1$ we find
\begin{align*}
\dt\|h\|^2 &\leq \frac{1}{2}\|h\|^2\|\dx S_tw\|_{L^\infty}+\frac{1}{2}\|h\|^2+C\|v\|_1^2\|z_\alpha\|_1^2+\frac{1}{4}\|h\|^2+\frac{1}{4}\|z_\alpha\|_1^4\\
&\leq \frac{1}{2}\|h\|^2(\|\dx S_tw\|_{L^\infty}+\frac{3}{2})+C\alpha\sup_{t\in[0,T]}\|v\|_1^2\sup_{t\in[0,T]}\|z\|_1^2+\frac{\alpha^2}{4}\sup_{t\in[0,T]}\|z\|_1^4
\end{align*}
Using the $H^2$-conservation law, we control $\|S_tw\|_{L^\infty(0,T;H^{3/2+})}$ (which does not depend on $\alpha$) and $\|v\|_{L^\infty(0,T;H^{1})}$ (see the estimate $(\ref{est_norm_inf2})$). It remains to apply the Gronwall lemma to get the claim.
\end{proof}

\begin{nem}\label{uniform_conve_equat}
For all $T,R,r>0$, we have
\begin{equation*}
\sup_{w\in B(0,R)}\sup_{t\in[0,T]}\E\left[\|S_{t,\alpha}w-S_tw\|_2\Bbb 1_{\{\|z\|_{L^\infty(0,T;H^2)}\leq r\}}\right]=O_{R,r,T}(\sqrt{\alpha}).
\end{equation*}
Here $B(0,R)$ is the ball in $H^3(\T)$ of center $0$ and radius $R$.
\end{nem}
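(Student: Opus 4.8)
The plan is to upgrade the almost sure convergence of Lemma \ref{pointwise_conver} into a convergence in expectation that is \emph{uniform} over the ball $B(0,R)\subset H^3$, by making the constants of that lemma explicit in the data and then integrating out the randomness carried by the stochastic convolution $z$ of \eqref{sol_lin_pbm_z}. First I would revisit the proof of Lemma \ref{pointwise_conver} and record the precise dependence of its bound. Writing again $S_{t,\alpha}w-S_tw=h+z_\alpha$ with $h=v-S_tw$ and $z_\alpha=\sqrt{\alpha}\,z$, every a priori estimate used there depends on $w$ only through $\|w\|_3$ and on $z$ only through $\|z\|_{L^\infty(0,T;H^3)}$: Lemma \ref{estim_123} and the bound \eqref{A} handle the $H^2$--data, the high order estimate \eqref{estim_s>2} (with $s=3$) controls $\|v\|_3$, and conservation of $E_2,E_3$ by \eqref{BO} controls $\|S_tw\|_2,\|S_tw\|_3$ in terms of $\|w\|_3$. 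Hence the very same computation gives, for every realization of $z$ and every $w\in B(0,R)$,
$$\sup_{t\in[0,T]}\|S_{t,\alpha}w-S_tw\|_2\le C\bigl(T,R,\|z\|_{L^\infty(0,T;H^3)}\bigr)\sqrt{\alpha},$$
with $C$ nondecreasing in its last argument. In parallel, the $H^2$ a priori bounds alone yield a crude deterministic bound $\sup_{t\le T}\|S_{t,\alpha}w-S_tw\|_2\le K(T,R,r)$ on the event $\{\|z\|_{L^\infty(0,T;H^2)}\le r\}$, uniformly over $w\in B(0,R)$.

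The whole difficulty is a mismatch of regularity: the cutoff in the statement controls only $\|z\|_{L^\infty(0,T;H^2)}$, while the convergence constant $C$ involves the strictly higher norm $\|z\|_{L^\infty(0,T;H^3)}$, which is a.s.\ finite but unbounded on $\{\|z\|_{L^\infty H^2}\le r\}$. I would remove this mismatch by truncating in the $H^3$--norm of $z$. Fixing a level $M>0$ and splitting according to $\{\|z\|_{L^\infty H^3}\le M\}$ and its complement,
$$\E\Bigl[\|S_{t,\alpha}w-S_tw\|_2\,\Bbb 1_{\{\|z\|_{L^\infty H^2}\le r\}}\Bigr]\le C(T,R,M)\sqrt{\alpha}+K(T,R,r)\,\P\bigl(\|z\|_{L^\infty(0,T;H^3)}>M\bigr),$$
where the first term uses the quantitative bound on $\{\|z\|_{L^\infty H^3}\le M\}$ and the second the crude bound on the complement. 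Both terms are already uniform in $w\in B(0,R)$ and $t\in[0,T]$, so it only remains to choose $M$ well.

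For the tail I would use that $z$ is the Gaussian stochastic convolution \eqref{sol_lin_pbm_z} and that $A_s<\infty$ for all $s$ in this section: the factorization/Burkholder--Davis--Gundy estimates for stochastic convolutions (as in \cite{KS12,da2014stochastic}) give $\E\sup_{t\le T}\|z\|_3^{2p}<\infty$ for every $p$ — in fact essentially Gaussian tails $\P(\|z\|_{L^\infty(0,T;H^3)}>M)\le e^{-cM^2}$ — complementing the second moment identity \eqref{ito_sur_linear2}. Tracking the Gronwall step in the proof of \eqref{estim_s>2} shows that $C(T,R,M)$ grows like $e^{c'M^2}$ in $M$. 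The crux of the argument is therefore to weigh these two exponentials against each other: taking $M=M(\alpha)$ growing slowly (of order $|\log\alpha|^{1/2}$, with a prefactor tuned to $c,c'$) makes the tail term negligible while keeping the first term a fixed power of $\alpha$, so that the supremum over $w\in B(0,R)$ and $t\in[0,T]$ is $o(1)$ as $\alpha\to 0$.

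I expect this balancing to be the main obstacle, and it is also where the exact rate must be read with care. With the super-exponential growth of $C(T,R,M)$ one obtains a clean bound $O_{R,r,T}(\alpha^{\beta})$ for some $\beta\in(0,\tfrac12)$, which already suffices for the uniform vanishing; recovering the stated rate $O_{R,r,T}(\sqrt{\alpha})$ amounts to sharpening the $M$--dependence of \eqref{estim_s>2} (or, equivalently, to exploiting the full strength of the Gaussian decay of the $H^3$--tail of $z$), and is not needed for the way this lemma is used afterwards. Everything else is a uniform, quantitative rereading of Lemma \ref{pointwise_conver}, whose estimates were already arranged so as to lose as few derivatives as possible.
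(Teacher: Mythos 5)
Your proof has exactly the paper's skeleton: the published argument is two lines long --- write $S_{t,\alpha}w-S_tw=h+z_\alpha$ with $h=v-S_tw$, use the cutoff event to bound $\|z_\alpha\|_2\leq r\sqrt{\alpha}$, and assert that ``the arguments of the proof of Lemma \ref{pointwise_conver}'' give $\sup_{t\in[0,T]}\|h\|_2\leq C_{R,r,T}\sqrt{\alpha}$. Where you genuinely diverge is in refusing to take the dependence $C_{R,r,T}$ on faith, and your objection is well founded: rerunning Lemma \ref{pointwise_conver} quantitatively, the $H^2$ bound on $h$ passes through the interpolation $\|h\|_2^3\lesssim\|h\|\,\|h\|_3^2$ and hence through the high-order estimate $(\ref{estim_s>2})$ with $s=3$, whose constant involves $\|z\|_{L^\infty(0,T;H^3)}$ --- a quantity that the event $\{\|z\|_{L^\infty(0,T;H^2)}\leq r\}$ does not control. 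The paper's proof, read literally, glosses over precisely this point, so your additional layer is a repair of a real lacuna rather than a detour: the truncation at level $M$ in $H^3$, the crude deterministic bound $K(T,R,r)$ on the complement (indeed available from Lemma \ref{estim_123} with $i=2$ together with the conservation laws controlling $\|S_tw\|_2$), the Gaussian tail for the stochastic convolution (uniform in $\alpha\in(0,1)$, since the semigroup $e^{-t(H-\alpha)\partial_x^2}$ is a contraction on each $H^s$, so the factorization/BDG or Fernique constants do not degenerate), and the balancing $M^2\sim\delta|\log\alpha|$ all go through. What your route buys is a proof that actually closes, uniformly over $w\in B(0,R)$; what it costs is the rate, which degrades from $\sqrt{\alpha}$ to $\alpha^\beta$ for some $\beta\in(0,\tfrac12)$. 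You correctly note this loss is harmless: in the only place the lemma is used (step 2 of the proof of Theorem \ref{thm_inv_mes}, bounding $A_{1,1}$), one lets $\alpha_k\to 0$ for fixed $R,r,T$ before sending $R,r\to\infty$, so any positive power of $\alpha$ suffices.

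Two smaller remarks. First, a lighter variant of your fix is to skip the truncation and integrate the random constant directly, $\E\bigl[C(T,R,\|z\|_{L^\infty(0,T;H^3)})\bigr]\alpha^\beta$; but since the Gronwall step makes $C$ grow like $e^{c\|z\|_{L^\infty(0,T;H^3)}^2}$, this needs a Fernique-type smallness condition on $c$, so your truncation-and-balance is the robust choice. Second, your caution about the stated rate is warranted already at the pointwise level: in Lemma \ref{pointwise_conver} the Gronwall bound yields $\|h\|^2=O(\sqrt{\alpha})$, and the interpolation $\|h\|_2\lesssim\|h\|^{1/3}\|h\|_3^{2/3}$ then gives a power of $\alpha$ strictly smaller than $\tfrac12$; so the $O(\sqrt{\alpha})$ in the statement should in any case be read as $O(\alpha^\beta)$ for some $\beta>0$, which is all the subsequent argument uses.
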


\begin{proof}
\begin{align*}
\E\left[\|S_{t,\alpha}w-S_tw\|_2\Bbb 1_{\{\|z\|_{L^\infty(0,T;H^2)}\leq r\}}\right] &=\int_\Omega\|S_{t,\alpha}w-S_tw\|_2\Bbb 1_{\{\|z\|_{L^\infty(0,T;H^2)}\leq r\}}(\omega)d\P(\omega)\\
&\leq \int_\Omega[\|h\|_2+r\sqrt{\alpha}]\Bbb 1_{\{\|z\|_{L^\infty(0,T;H^2)}\leq r\}}(\omega)d\P(\omega),
\end{align*}
where $h=v-S_tw$ as before. The arguments of the proof of Lemma $\ref{pointwise_conver}$ allow to see that $\sup_{t\in[0,T]}\|h\|_2\leq C_{R,r,T}\sqrt{\alpha}$. This gives the claimed result.
\end{proof}
\subsection*{An accumulation point for the viscous stationary measures}
In what follows we denote by $M(H^3)$ the space of probability measures on $H^3.$
\begin{thm}\label{thm_inv_mes}
For any sequence $(\alpha_k)_{k\in\N}\subset(0,1)$ converging to $0$ as $k\to\infty$, there is a subsequence $\alpha_{r(k)}$ and $\mu\in M(H^3)$ such that:
\begin{itemize}
\item $\lim_{k\to\infty}\mu_{\alpha_{r(k)}}=\mu$ \ \ in the weak topology of $H^3,$
\item $\mu$ is invariant under the flow of the Benjamin-Ono equation in $H^{3}(\T),$

\item $\mu$ is concentrated on $C^\infty(\T),$
\item $\mu$ satisfies 
\begin{align}
\int_{H^3(\T)}\|u\|^2_1\mu(du) &=\frac{A_0}{2}, \label{ident_esp_h1}\\
\int_{H^3(\T)}\|u\|^{2p}\mu(du)  &\leq p^p A_0^p \ \ \ \text{for any $1\leq p<\infty$,}\label{estim_esp_lp}\\
\int_{H^3(\T)}\|u\|_{n}^2\mu(du) &<\infty\label{est_esp_n>2}\ \ \ \text{for $n\geq 2$}.
\end{align}
\end{itemize}
\end{thm}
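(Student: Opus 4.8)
The plan is to run the fluctuation--dissipation limiting scheme: first extract a weak limit $\mu$ of the viscous invariant measures $\mu_\alpha$ in $M(H^3)$, then transfer the uniform-in-$\alpha$ bounds of $(\ref{ident_esp_visc_h1})$--$(\ref{estim_esper_viscn>2})$ to $\mu$, and finally upgrade invariance from the BOB flow $S_{t,\alpha}$ to the BO flow $S_t$. For the \emph{extraction}, I would use $(\ref{estim_esper_viscn>2})$ with $n=4$: since $\int_{H^2}\|u\|_4^2\,\mu_\alpha(du)\le D_4$ uniformly in $\alpha$, Chebyshev gives $\mu_\alpha(\{\|u\|_4>R\})\le D_4/R^2$, and as closed balls $\{\|u\|_4\le R\}$ are compact in $H^3(\T)$ (Rellich on the torus), the family $\{\mu_\alpha\}$ is tight in $M(H^3)$. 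By Prokhorov there is a sequence $\alpha_k\to 0$ with $\mu_{\alpha_k}\rightharpoonup\mu$ weakly.

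\textbf{Passing the moments to the limit and smoothness.} For each $n$ the map $u\mapsto\|u\|_n^2$ (set to $+\infty$ off $H^n$) is lower semicontinuous on $H^3$: if $u_j\to u$ in $H^3$ with $\liminf\|u_j\|_n<\infty$, a subsequence converges weakly in $H^n$ to $u$ and weak lower semicontinuity of the norm applies. Hence Portmanteau together with $(\ref{estim_esper_viscn>2})$ and $(\ref{estim_esper_visc_moy_lp})$ yield $\int\|u\|_n^2\,d\mu\le\liminf_k\int\|u\|_n^2\,d\mu_{\alpha_k}\le D_n$ and $\int\|u\|^{2p}\,d\mu\le p^pA_0^p$, i.e. $(\ref{est_esp_n>2})$ and $(\ref{estim_esp_lp})$. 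In particular $\|u\|_n<\infty$ $\mu$-a.e., so $\mu(H^n)=1$ for all $n$ and therefore $\mu(C^\infty(\T))=\mu(\cap_n H^n)=1$, mirroring Corollary $\ref{cor_tt_mes_est_reg}$ at the level of the limit. For the \emph{equality} $(\ref{ident_esp_h1})$ lower semicontinuity only gives ``$\le A_0/2$''; for the reverse I would establish uniform integrability of $\{\|u\|_1^2\}$ under $\{\mu_{\alpha_k}\}$. This follows from a uniform higher-moment bound: interpolating $\|u\|_1\le\|u\|^{1/2}\|u\|_2^{1/2}$ and combining $(\ref{estim_esper_visc_moy_lp})$ with $(\ref{estim_esper_viscn>2})$ at $n=2$ through H\"older gives $\sup_k\int\|u\|_1^{2+\delta}\,d\mu_{\alpha_k}<\infty$ for some $\delta>0$. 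Since $u\mapsto\|u\|_1^2$ is continuous on $H^3$, uniform integrability plus weak convergence give $\int\|u\|_1^2\,d\mu=\lim_k\int\|u\|_1^2\,d\mu_{\alpha_k}=A_0/2$ by $(\ref{ident_esp_visc_h1})$, which is $(\ref{ident_esp_h1})$.

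\textbf{Invariance (the main step).} Fix $t>0$; I want $\int\phi_t f\,d\mu=\int f\,d\mu$. It suffices to test against bounded $f$ that are Lipschitz for the $H^2$-distance: these are $H^3$-continuous, separate the points of $H^3$, and a Stone--Weierstrass-on-compacta argument (every Borel probability measure on the Polish space $H^3$ being tight) makes them measure-determining. Writing $\phi_t f(w)=f(S_tw)$ and using the $\mu_{\alpha_k}$-invariance $\int\phi_{t,\alpha_k}f\,d\mu_{\alpha_k}=\int f\,d\mu_{\alpha_k}$, I decompose
\[
\int\phi_t f\,d\mu-\int f\,d\mu=\Big(\int\phi_t f\,d\mu-\int\phi_t f\,d\mu_{\alpha_k}\Big)+\int(\phi_t f-\phi_{t,\alpha_k}f)\,d\mu_{\alpha_k}+\Big(\int f\,d\mu_{\alpha_k}-\int f\,d\mu\Big).
\]
The outer brackets tend to $0$ by weak convergence, since $f$ and $\phi_t f=f\circ S_t$ are bounded and $H^3$-continuous (continuity of $S_t$ on $H^3$ is the well-posedness of BO). For the middle term I use $|\phi_t f-\phi_{t,\alpha_k}f|(w)\le L\,\E[\|S_{t,\alpha_k}w-S_tw\|_2\mathbb 1_{\{\|z\|_{L^\infty(0,T;H^2)}\le r\}}]+2\|f\|_\infty\P(\|z\|_{L^\infty(0,T;H^2)}>r)$, split the $w$-integral over $B(0,R)\subset H^3$ and its complement, and bound: on the ball and good event Lemma $\ref{uniform_conve_equat}$ gives $L\cdot O_{R,r,T}(\sqrt{\alpha_k})$; the bad event contributes $\le 2\|f\|_\infty\P(\|z\|_{L^\infty(0,T;H^2)}>r)$, which is $\alpha$-independent and $\to0$ as $r\to\infty$ because $z\in C(0,T;H^2)$ a.s.; the complement contributes $\le 2\|f\|_\infty\,\mu_{\alpha_k}(B(0,R)^c)\le 2\|f\|_\infty D_3/R^2$ by Chebyshev and $(\ref{estim_esper_viscn>2})$, again uniform in $k$. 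Choosing $R,r$ large first and then letting $\alpha_k\to0$, the middle term vanishes, so $\int\phi_t f\,d\mu=\int f\,d\mu$ for every such $f$ and every $t$; thus $\mu$ is $S_t$-invariant.

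\textbf{Main obstacle.} The delicate point is the invariance step, precisely because of the regularity mismatch: the flow convergence in Lemmas $\ref{pointwise_conver}$ and $\ref{uniform_conve_equat}$ is available only in $H^2$, whereas $\mu$ lives on $H^3$. This forces the use of $H^2$-Lipschitz test functions (which must be argued to be measure-determining on $H^3$) together with the careful double limit above, where it is exactly the uniformity in $k$ of the tail bounds on $\mu_{\alpha_k}$ and on $\|z\|$ that legitimizes interchanging the limits in $R,r$ and in $\alpha_k$.
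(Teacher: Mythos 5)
Your proposal is correct and follows essentially the same route as the paper's proof: tightness from $(\ref{estim_esper_viscn>2})$ with $n=4$ plus Prokhorov, the identical three-term decomposition for invariance with the $B(0,R)$ and $\{\|z\|_{L^\infty(0,T;H^2)}\le r\}$ splittings resolved by Lemma $\ref{uniform_conve_equat}$ and Chebyshev, and the same tail-control idea for the equality $(\ref{ident_esp_h1})$ (your uniform integrability via a $2+\delta$ moment is the paper's Cauchy--Schwarz/interpolation tail bound in a slightly different dress). One small correction: $z$ is \emph{not} $\alpha$-independent (its convolution kernel $e^{-(t-s)(H-\alpha)\dy}$ depends on $\alpha$), so the bad-event probability $\P(\|z\|_{L^\infty(0,T;H^2)}>r)$ must be made small uniformly in $\alpha$ via the moment estimate $(\ref{ito_sur_linear2})$ and Chebyshev, as the paper does, rather than by almost-sure continuity of $z$ for a fixed $\alpha$ alone.
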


\begin{proof}
The proof consists in the following four steps:
\paragraph{$1.$ \textbf{Existence of an accumulation point $\mu$.}}
The estimate $(\ref{estim_esper_viscn>2})$ with $n=4$ implies the tightness of the sequence of measures $(\mu_\alpha)$ in $H^{3}(\T)$ and, by the Prokhorov theorem, the existence of the claimed accumulation point $\mu$  on $H^3(\T).$

\paragraph{$2.$ \textbf{Invariance of $\mu$ under the Benjamin-Ono flow.}}
Denote by $(\mu_{\alpha_k})_{k\in\N}$ a subsequence of $(\mu_\alpha)$ converging to $\mu$ (with $\lim_{k\to\infty}\alpha_k=0$), to simplify the notations we write $\mu_k$ instead. The corresponding flow and Markov semi-group will be denoted $S_{t,k}$ and $\phi_{t,k}.$\\
The following diagram represents the idea of proof of the invariance of $\mu$:
$$
\hspace{10mm}
\xymatrix{
  \phi_{t,k}^*\mu_{k} \ar@{=}[r]^{(I)} \ar[d]^{(III)} & \mu_{k} \ar[d]^{(II)} \\
    \phi_t^*\mu \ar@{=}[r]^{(IV)} & \mu
  }
$$
The equality $(I)$ is the invariance of $\mu_k$ by $\phi_{t,k}$, and $(II)$ is proved above. Then $(IV)$ is proved once $(III)$ is checked.

Let $f$ be a real bounded Lipschitz function on $H^2(\T)$. Without loss of generality assume that $f$ is bounded by $1$. Then
\begin{align*}
(\phi^*_{k,t}\mu_k,f)-(\phi^*_{t}\mu,f)&=(\mu_k,\phi_{t,k}f)-(\mu,\phi_tf)\\
&=(\mu_k,\phi_{t,k}f-\phi_tf)-(\mu-\mu_k,\phi_tf)\\
&=A-B.
\end{align*}
The term $B$ converges to $0$ as $k\to\infty$ by the weak convergence of $(\mu_k)$ to $\mu.$ And for any $R>0$
\begin{align*}
|A| &\leq \int_{H^3}\E|f(S_{t,k}w)-f(S_tw)|\mu_k(dw)\\
&=\int_{B(0,R)}\E|f(S_{t,k}w)-f(S_tw)|\mu_k(dw)+\int_{H^3\backslash B(0,R)}\E|f(S_{t,k}w)-f(S_tw)|\mu_k(dw)\\
&=A_1+A_2.
\end{align*}
Recalling that $f$ is bounded by $1$, we get by the Chebyshev inequality
\begin{align}
A_2\leq 2\mu_k(H^3\backslash B(0,R))\leq \frac{C}{R^2},\label{estim_A_2:pass_limit}
\end{align}
where $C$ is finite and does not depend on $k$ (estimate $(\ref{estim_esper_viscn>2})$). Denote by $L^\infty_tH^2_x$ the space $L^\infty(0,T;H^2).$ Let $r>0$, we have
\begin{align*}
A_1 &=\int_{B(0,R)}\E\left[|f(S_{t,k}w)-f(S_tw)|\Bbb 1_{\{\|z\|_{L^\infty_tH^2_x}\leq r\}}\right]\mu_k(dw)+\int_{B(0,R)}\E\left[|f(S_{t,k}w)-f(S_tw)|\Bbb 1_{\{\|z\|_{L^\infty_tH^2_x}> r\}}\right]\mu_k(dw)\\
&=A_{1,1}+A_{1,2}.
\end{align*}
As before, since $f$ is bounded by $1$, we use $(\ref{ito_sur_linear2})$ and Chebyshev's inequality to get
\begin{equation*}
A_{1,2}\leq\frac{C_T}{r^2}.
\end{equation*}
 On the other hand, $f$ is Lipschitz on $H^2$,  we have
\begin{align*}
A_{1,1} &\leq C_f\int_{B(0,R)}\E\left[\|S_{t,k}w-S_tw\|_2\Bbb 1_{\{\|z\|_{L^\infty_tH^2_x}\leq r\}}\right]\mu_k(dw)\\
&\leq C_f\sup_{w\in B(0,R)}\E\left[\|S_{t,k}w-S_tw\|_2\Bbb 1_{\{\|z\|_{L^\infty_tH^2_x}\leq r\}}\right],
\end{align*}
where $C_f$ is the Lipschitz constant of $f$.\\
According to Lemma $\ref{uniform_conve_equat}$, we find
\begin{equation*}
A_{1,1}\leq C_{f,R,r,T}\sqrt{\alpha_k}.
\end{equation*}
Finally, we arrive at
\begin{equation*}
|A|\leq C_{f,R,r,T}\sqrt{\alpha_k}+\text{Const}(T)\left(\frac{1}{r^2}+\frac{1}{R^2}\right),
\end{equation*}
where Const does not depend on $k$. We get the desired result after passing to the limits in this order 
\begin{align*}
k &\to\infty, \\ R,r &\to \infty.
\end{align*}

\paragraph{$3.$ \textbf{The estimates for the measure $\mu$.}}
 Denoting by $\chi_R$ a bump function on the ball $B(0,R)$ of $H^3(\T)$, by $(\ref{ident_esp_visc_h1})$ we have
\begin{align*}
\int_{H^3}\chi_R(v)\|v\|_1^2\mu_k(dv)&\leq \frac{A_0}{2}.
\end{align*}
Passing to the limit $k\to\infty$ we find
\begin{align*}
\int_{H^3}\chi_R(v)\|v\|_1^2\mu(dv)&\leq \frac{A_0}{2}.
\end{align*}
Then Fatou's lemma gives
\begin{align}\label{non_ident_h1_pass_lim}
\E\|u\|_1^2=\int_{H^3}\|v\|_1^2\mu(dv)&\leq \frac{A_0}{2}.
\end{align}
We proceed similarly to show $(\ref{estim_esp_lp})$ and $(\ref{est_esp_n>2}).$\\
Now we write
\begin{align*}
\frac{A_0}{2}&=\int_{B(0,R)}\|v\|_1^2\mu_k(dv)+\int_{H^3\backslash B(0,R)}\|v\|_1^2\mu_k(dv).
\end{align*}
We use the Cauchy-Schwarz and Chebyshev inequalities to show that
\begin{align*}
\int_{H^3\backslash B(0,R)}\|u\|_1^2\mu_k(du) &=\int_{H^3}\|u\|_1^2\Bbb 1_{\|u\|_3>R}(u)\mu_k(du)\\
&\leq \left(\int_{H^3}\|u\|^4_1\mu_k(du)\right)^{\frac{1}{2}}\left(\mu_k(\|u\|_3>R)\right)^{\frac{1}{2}}\\
&\leq \frac{\sqrt{\E[\|u\|_3^2]\E[\|u\|_1^4]}}{R}.
\end{align*}
We can control $\E[\|u\|_1^4]$ and $\E[\|u\|_3^2]$ uniformly in $k$ combining interpolation inequalities and the estimates $(\ref{estim_esp_lp})$ and $(\ref{est_esp_n>2}).$ 
Then there is a constant $C>0$ independent of $k$ such that
\begin{align*}
\frac{A_0}{2}-\frac{C}{R}\leq\int_{H^3}\chi_R(v)\|v\|_1^2\mu_k(dv).
\end{align*}
We find
$(\ref{ident_esp_h1})$ after passing to the limits in the order
\begin{align*}
k &\to\infty,\\ R &\to \infty,
\end{align*}
and combining this with $(\ref{non_ident_h1_pass_lim}).$

\paragraph{$4.$ \textbf{The measure $\mu$ is concentrated on $C^\infty(\T).$}}
This immediately follows from the estimates $(\ref{est_esp_n>2})$ with use of the arguments of the proof of Corollary $\ref{cor_tt_mes_est_reg}.$
\end{proof}
\section{Qualitative properties of the measure}

\subsection*{Absolute continuity of some observables with respect to the Lebesgue measure}
The following result is inspired by \cite{armen_nondegcgl,KS12} where the local time concept is used to deduce non-degeneracy properties of measures constructed for the nonlinear Schrödinger and Euler equations.
\begin{thm}\label{non_den_plus}
Suppose that $\lambda_m\neq 0$ for all $m$. Then for any integer $n\geq 1$, there are constants $b_n$ and $c_n$ such that the distribution of the observable $\tilde{E}_n(u):=E_n(u)+c_n\|u\|^2(1+\|u\|^2)^{b_n}$ under $\mu$ has a density w.r.t. the Lebesgue measure on $\R$.
\end{thm}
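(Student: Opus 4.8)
The plan is to establish the absolute continuity of the law of the observable $\tilde{E}_n(u)$ under $\mu$ by means of the local-time (or occupation-measure) machinery, working on the viscous approximations $\mu_\alpha$ and passing to the limit. The starting point is that a probability measure $\nu$ on $\R$ is absolutely continuous with respect to Lebesgue measure whenever it admits a square-integrable local time, or more concretely when the map $t\mapsto \nu$-distribution can be shown to have a bounded occupation density. The cleaner route here is to exploit the stochastic dynamics directly: under $\mu_\alpha$, started at stationarity, the process $t\mapsto \tilde{E}_n(u(t))$ is an Itô semimartingale, and the key point is that its martingale part has a strictly positive quadratic variation density, coming from the nondegeneracy assumption $\lambda_m\neq 0$ for all $m$.

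First I would fix $n$ and, guided by Remark \ref{remark_nouvelles_lois}, choose the constants $c_n,b_n$ so that $\tilde{E}_n$ is a genuine, suitably coercive modification of $E_n$ with $\du\tilde{E}_n(u,u)\geq \|u\|_n^2$; this choice is exactly what makes the drift controllable. Next I would apply the Itô formula to $\tilde{E}_n(u(t))$ along the stochastic BOB flow, as in the proof of Theorem \ref{ito_sur_tous}, to write
\begin{equation*}
d\tilde{E}_n(u)=b(u)\,dt+\sqrt{\alpha}\sum_{m\in\Z}\lambda_m\,\du\tilde{E}_n(u,e_m)\,d\beta_m,
\end{equation*}
so that the quadratic variation of the martingale part is
\begin{equation*}
d\langle\tilde{E}_n(u)\rangle_t=\alpha\sum_{m\in\Z}\lambda_m^2\,|\du\tilde{E}_n(u,e_m)|^2\,dt.
\end{equation*}
Because $\tilde{E}_n$ contains the leading term $\|u\|_n^2=\sum m^{2n}|\hat u(m)|^2$, the Fréchet derivative $\du\tilde{E}_n(u,e_m)$ is dominated by (and, for the nonzero Fourier modes, comparable to) $m^{2n}\hat u(m)$; since $\lambda_m\neq 0$ this density is strictly positive except on the measure-zero set where all relevant projections vanish. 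Then I would invoke the occupation-times/local-time criterion (the Krylov-type estimate used in \cite{armen_nondegcgl,KS12}): a nondegenerate positive quadratic variation density forces the one-dimensional marginal of $\tilde{E}_n(u(t))$ to possess a density, and by stationarity this marginal law is precisely $\tilde{E}_n{}_*\mu_\alpha$. Finally I would pass to the limit $\alpha\to 0$ along the accumulation sequence of Theorem \ref{thm_inv_mes}, checking that absolute continuity survives the weak limit via a uniform bound on the relevant density or on the local-time second moment.

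The hard part will be controlling the degeneracy and the $\alpha$-dependence simultaneously. The quadratic variation carries a factor $\alpha$, which vanishes in the limit, so a naive bound on the local time blows up; the correct normalization must track how the invariant measure $\mu_\alpha$ concentrates and show that the \emph{stationary} occupation density stays bounded uniformly in $\alpha$ — morally because the $\alpha$ in the noise is balanced by the $\alpha$ in the dissipation, just as the energy identities \eqref{ident_esp_visc_h1}-\eqref{estim_esper_viscn>2} are $\alpha$-uniform. Establishing this uniform nondegeneracy, i.e. a lower bound on $\sum_m\lambda_m^2|\du\tilde{E}_n(u,e_m)|^2$ that is effective on a set of $\mu_\alpha$-measure bounded below independently of $\alpha$, together with the upper moment control needed to apply the local-time estimate, is the crux; the rest is a standard weak-limit and Fatou argument. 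I would also need to treat the assertion for all $n$ simultaneously and verify independence, but independence of the $\{\tilde{E}_n\}$ should follow from the distinct leading Sobolev orders making them functionally independent, so the analytic weight falls on the single-observable absolute continuity above.
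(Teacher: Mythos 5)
You have the right skeleton --- It\^o's formula for $\tilde E_n$ along the stationary solution of the viscous problem, a local-time/occupation-measure estimate uniform in $\alpha$, then Portmanteau in the inviscid limit --- and this is indeed the paper's route. But the two points you defer as ``the crux'' are the proof, and your proposed handle on each is either unnecessary or would fail. On the $\alpha$-dependence: there is nothing delicate to track. Since $E_n$ is conserved by the BO flow, the drift of $\tilde E_n(u(t))$ is exactly $\alpha A(s)$ with $A(s)=\du \tilde E_n(u,\dy u)+\frac{1}{2}\sum_m\lambda_m^2\partial_u^2\tilde E_n(u,e_m)$; by stationarity the expected local time satisfies $\E\Lambda_t(a)=-\alpha t\,\E[A(0)\Bbb 1_{(a,+\infty)}(\tilde E_n(u))]$, i.e.\ it is itself $O(\alpha t)$, with a constant controlled by the $\alpha$-uniform moment bounds. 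Comparing with the occupation identity $2\int_\Gamma\E\Lambda_t(a)\,da=\alpha t\sum_m\lambda_m^2\E[\Bbb 1_\Gamma(\tilde E_n(u))\tilde E_n'(u,e_m)^2]$, the factor $\alpha t$ cancels exactly, giving the $\alpha$-free bound $\sum_m\lambda_m^2\E[\Bbb 1_\Gamma(\tilde E_n(u))\tilde E_n'(u,e_m)^2]\leq C\lambda(\Gamma)$. So your worry that ``a naive bound on the local time blows up'' is misplaced: the compensating $\alpha$ sits in the drift identity, not in some subtle concentration property of $\mu_\alpha$.

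The genuine gap is the non-degeneracy step. Your claim that $\du\tilde E_n(u,e_m)$ is ``dominated by and comparable to $m^{2n}\hat u(m)$'' is false in general: $R_n'(u,e_m)$ is a nonlinear term which, for fixed $u$, can cancel the linear part mode by mode, and ``strictly positive off a null set'' carries no quantitative content that can be played against $\lambda(\Gamma)$. The paper uses instead only the aggregate coercivity $\tilde E_n'(u,u)\geq\|u\|_n^2$ (this is exactly what the choice of $b_n,c_n$ in Remark \ref{remark_nouvelles_lois} buys), rewritten as $(A_nu,u)$ for the operator $A_n$ defined by $\tilde E_n'(u,v)=(A_nu,v)$. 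Splitting the modes at a level $N$ yields $(A_nu,u)\leq\frac{\|u\|_1}{\underline{\lambda}_N}\bigl(\sum_m\lambda_m^2\tilde E_n'(u,e_m)^2\bigr)^{1/2}+\|A_nu\|\,\frac{\|u\|_1}{N}$, whence on $A_\epsilon=\{\|u\|\geq\epsilon,\ \|A_nu\|\leq 1/\epsilon\}$ the quantitative lower bound $\sum_m\lambda_m^2\tilde E_n'(u,e_m)^2\geq\underline{\lambda}_N^2\bigl(\epsilon-\frac{1}{N^2\epsilon}\bigr)^2$ with $N=N(\epsilon)$. The complement is handled by Chebyshev on $\{\|A_nu\|>1/\epsilon\}$ and, crucially, by the small-ball estimate $\mu(\{\|u\|<\epsilon\})\leq C\epsilon$ of Theorem \ref{kuksin_thm} --- a separate one-dimensional local-time result that your argument never secures and cannot wave through, since the diffusion coefficient of $\tilde E_n(u(t))$ genuinely degenerates near $u=0$. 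Assembling gives $\mu_\alpha(\{\tilde E_n(u)\in\Gamma\})\leq\frac{C_1}{\alpha(\epsilon)}\lambda(\Gamma)+C_2\epsilon$ uniformly in $\alpha$, which is the inequality that actually delivers absolute continuity after the weak limit. (A minor point: independence of the $\tilde E_n$'s is not part of this theorem's statement, so you need not address it here.)
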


For the proof of Theorem $\ref{kuksin_thm}$ below, we refer the reader to \cite{armen_nondegcgl} and the proof of Theorem $5.2.12$ of \cite{KS12} where the authors prove similar results in the case  of the nonlinear Schrödinger and Euler equations respectively.
\begin{thm}\label{kuksin_thm}
The measure $\mu$ constructed in Theorem \ref{thm_inv_mes} satisfies the following non-degeneracy properties:
\begin{enumerate}
\item Let $\lambda_m\neq 0$ for at least two indices. Then $\mu$ has no atom at $0$ and
\begin{equation}\label{nongen_1}
\mu(\{u\in C^\infty:\ \|u\|\leq \delta\})\leq C
\sqrt{A_0}\gamma^{-1}\delta\ \ \ \text{for all $\delta>0$},
\end{equation}
where $\gamma=\inf\{A_0-\lambda_m^2,\ m\in\Bbb Z\}$ and $C$ is a universal constant.
\item Let $\lambda_m\neq 0$ for all indices. Then there is an increasing continuous function $h(r)$ vanishing at $r=0$ such that
\begin{equation}\label{nongen_2}
\mu(\{u\in C^\infty(\T):\ \|u\|\in \Gamma\})\leq h(\ell(\Gamma))
\end{equation}
for any Borel set $\Gamma\subset \R$, where $\ell$ stands for the Lebesgue measure on $\R.$
\end{enumerate}
\end{thm}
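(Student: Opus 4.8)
The plan is to establish both bounds first at the level of the stationary viscous measures $\mu_\alpha$, with constants independent of $\alpha$, and then pass to the limit along the subsequence $\mu_{\alpha_k}$ whose weak convergence to $\mu$ is provided by Theorem $\ref{thm_inv_mes}$. Since $u\mapsto\|u\|$ is continuous on $H^3$, for open $\Gamma$ the set $\{\|u\|\in\Gamma\}$ is open and the indicator $\Bbb 1_{\{\|u\|\in\Gamma\}}$ is lower semicontinuous, so the Portmanteau theorem yields $\mu(\{\|u\|\in\Gamma\})\leq\liminf_k\mu_{\alpha_k}(\{\|u\|\in\Gamma\})$; approximating a general Borel set from outside by open sets and using outer regularity of $\lambda$ then transfers any uniform bound of the form $\mu_\alpha(\|u\|\in\Gamma)\leq F(\lambda(\Gamma))$, with $F$ continuous, to $\mu$. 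In particular $(\ref{nongen_1})$ for $\mu$ follows from its viscous analogue, and it immediately forces $\mu(\{0\})\leq\mu(\|u\|\leq\delta)\to 0$, so $\mu$ has no atom at the origin. Everything therefore reduces to uniform-in-$\alpha$ estimates for $\mu_\alpha$.

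For those I would exploit that under $\mu_\alpha$ the BOB flow is stationary, so $\xi(t):=E_0(u(t))=\|u(t)\|^2$ is a stationary real semimartingale with a completely explicit decomposition. As in Proposition $\ref{Ito_on_H}$ and $(\ref{itoeo})$ the dispersive and transport parts of $(\ref{BOBf})$ preserve $E_0$, and Itô's formula gives
\begin{equation*}
d\xi(t)=\alpha\bigl(A_0-2\|u(t)\|_1^2\bigr)\,dt+dM(t),\qquad \langle M\rangle_t=4\alpha\int_0^t\mathcal B(u(s))\,ds,
\end{equation*}
where $\mathcal B(u)=\sum_{m\in\Z}\lambda_m^2(u,e_m)^2$ is the diffusion coefficient. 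The two decisive structural features are that the drift is of order $\alpha$ and that the diffusion vanishes exactly where the noise fails to excite $u$; moreover the identity $(\ref{ident_esp_visc_h1})$, namely $\int\|u\|_1^2\,\mu_\alpha(du)=A_0/2$, controls the drift in $L^1(\mu_\alpha)$ uniformly in $\alpha$.

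The core of the argument is the local-time machinery of \cite{armen_nondegcgl,KS12}. Writing $L_T^x$ for the local time of $\xi$, Tanaka's formula together with stationarity cancels the endpoint terms $\E|\xi_T-x|=\E|\xi_0-x|$ and leaves
\begin{equation*}
\sup_x\E L_T^x\leq\E\int_0^T|a(s)|\,ds=\alpha T\int\bigl|A_0-2\|u\|_1^2\bigr|\,\mu_\alpha(du)\leq 2\alpha A_0 T .
\end{equation*}
Combining the occupation-time formula $\int_0^T g(\xi_s)\,d\langle M\rangle_s=\int_\R g(x)L_T^x\,dx$ with the splitting of $\Bbb 1_\Gamma(\xi_s)$ according to $\{4\alpha\mathcal B>\rho\}$ and its complement, and using stationarity once more, the choice $\rho=4\alpha r$ removes the viscosity and yields the $\alpha$-free inequality
\begin{equation*}
\mu_\alpha(\{\|u\|^2\in\Gamma\})\leq\frac{A_0\,\lambda(\Gamma)}{2r}+\mu_\alpha(\{\mathcal B\leq r\}) .
\end{equation*}
Optimising in $r$ produces the modulus $h$ of $(\ref{nongen_2})$ (after the harmless change of variables between $\|u\|$ and $\|u\|^2$) as soon as the degeneracy term tends to $0$ with $r$ uniformly in $\alpha$; when $\lambda_m\neq 0$ for every $m$ this uniform non-degeneracy is obtained by applying the same local-time estimate to the low-mode energies, which is precisely where the hypothesis of part (2) enters.

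For the sharp linear bound $(\ref{nongen_1})$ I would analyse $\xi=\|u\|^2$ near the origin, where the drift reduces to the strictly positive constant $\alpha A_0$ while $\mathcal B$ degenerates. Comparing the resulting dynamics with a squared-Bessel process whose effective dimension is governed by the ratio of $A_0$ to the active variances, the threshold $\gamma=\inf_m(A_0-\lambda_m^2)=\inf_m\sum_{k\neq m}\lambda_k^2$ emerges: $\gamma>0$ is exactly the statement that at least two modes are forced, i.e. that the dimension is at least $2$, so the origin is not attained and $\mu_\alpha(\|u\|\leq\delta)\lesssim\sqrt{A_0}\,\gamma^{-1}\delta$ with constant independent of $\alpha$. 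The hard part throughout is this uniform-in-$\alpha$ control of the silent set $\{\mathcal B\leq r\}$ and of the behaviour near $\|u\|=0$: the local-time inequality itself is cheap, but showing that $\mu_\alpha$ does not concentrate on the region where the noise is inactive, uniformly as the viscosity vanishes, is the delicate point, and it is here that the full non-degeneracy $\lambda_m\neq0$ and the explicit constants $A_0,\gamma$ coming from $(\ref{ident_esp_visc_h1})$ are decisive. A final routine matter is justifying Tanaka's formula and the occupation-time identity for the only $H^3$-valued stationary process and pushing the inequalities through the weak limit, which is handled using the tightness already established in Theorem $\ref{thm_inv_mes}$.
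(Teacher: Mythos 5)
Your skeleton is the right one and, up to reorganisation, coincides with the argument the paper actually appeals to: the paper gives no self-contained proof of this theorem but refers to \cite{armen_nondegcgl} and to Theorem $5.2.12$ of \cite{KS12}, stating that "the proof is exactly the same", and those proofs are precisely the local-time scheme you set up. Your Itô decomposition of $\xi(t)=\|u(t)\|^2$ is correct; the Tanaka-plus-stationarity bound $\sup_x\E L_T^x\leq 2\alpha A_0T$ does follow from $(\ref{ident_esp_visc_h1})$; the occupation-time splitting does yield the $\alpha$-free inequality $\mu_\alpha(\{\|u\|^2\in\Gamma\})\leq \frac{A_0\lambda(\Gamma)}{2r}+\mu_\alpha(\{\mathcal{B}\leq r\})$ with $\mathcal{B}(u)=\sum_m\lambda_m^2(u,e_m)^2$; and your Portmanteau/outer-regularity passage from $\mu_{\alpha_k}$ to $\mu$ (open sets first, then general Borel $\Gamma$, with a truncation in $\|u\|$ handled by the uniform moments to make the change of variables between $\|u\|$ and $\|u\|^2$ harmless) is handled correctly.

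The genuine gaps sit exactly at the two steps that carry the theorem, and the mechanisms you propose there would not work as stated. First, for $(\ref{nongen_1})$ you invoke a comparison of $\xi$ with a squared Bessel process; no pathwise comparison is available, because the drift of $\xi$ is $\alpha(A_0-2\|u\|_1^2)$ and $\|u\|_1^2$ is not controlled pointwise by $\xi$ (it is unbounded on every $L^2$-sphere), so near the origin $\xi$ is not dominated by any one-dimensional diffusion. The argument of the cited references is instead a stationarity identity: applying the Itô formula to $f_\epsilon(u)=(\|u\|^2+\epsilon)^{1/2}$ under the stationary solution, taking expectations so that the endpoint terms cancel, and using $\mathcal{B}(u)\leq(\sup_m\lambda_m^2)\|u\|^2$ gives
\begin{equation*}
\frac{\gamma}{2}\,\E_{\mu_\alpha}\bigl[(\|u\|^2+\epsilon)^{-1/2}\bigr]\leq \E_{\mu_\alpha}\bigl[\|u\|_1^2\,(\|u\|^2+\epsilon)^{-1/2}\bigr],
\end{equation*}
in which $\alpha$ cancels and $\gamma=\inf_m(A_0-\lambda_m^2)$ appears exactly where you predicted; the right-hand side is then bounded uniformly in $\alpha$ via the interpolation $\|u\|_1^2\leq\|u\|\,\|u\|_2$ together with the uniform moment bounds $(\ref{estim_esper_viscn>2})$, and the Chebyshev inequality yields the bound linear in $\delta$. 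Your proposal names $\gamma$ correctly but supplies no mechanism that survives the $\|u\|_1^2$ term. Second, for $(\ref{nongen_2})$ you claim the silent set $\{\mathcal{B}\leq r\}$ is controlled "by applying the same local-time estimate to the low-mode energies"; local-time estimates give absolute continuity of scalar observables, not smallness of $\mu_\alpha(\{\mathcal{B}\leq r\})$. What is actually used is part $(1)$ plus compactness: $\mathcal{B}$ is continuous on $L^2$ and, when $\lambda_m\neq0$ for all $m$, vanishes only at $u=0$; the set $\{\|u\|\geq\delta,\ \|u\|_1\leq R\}$ is compact in $L^2$ and avoids the origin, so $\mathcal{B}\geq c(\delta,R)>0$ there, whence for $r<c(\delta,R)$ one has $\mu_\alpha(\{\mathcal{B}\leq r\})\leq C\sqrt{A_0}\gamma^{-1}\delta+A_0/(2R^2)$ by part $(1)$, $(\ref{ident_esp_visc_h1})$ and Chebyshev, uniformly in $\alpha$; optimising $\delta$ and $R$ as functions of $r$ produces the modulus $h$. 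Since you yourself flag this uniform non-degeneracy as "the delicate point" without closing it, the proposal as written stops short of a proof precisely at the theorem's crux.
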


\begin{proof}[Proof of Theorem $\ref{non_den_plus}$]
We prove the claim for the stationary measures in the case $\alpha>0,$ with uniform bounds in $\alpha.$ Then we can pass to the limit $\alpha\to 0$ to obtain the desired result using the Portmanteau theorem. First we apply the Itô formula to $\tilde{E}_n(u)$:
\begin{equation*}
\tilde{E}_n(u(t))=\tilde{E}_n(u(0))+\alpha\int_0^tA(s)ds+\sqrt{\alpha}\sum_{m\in\Z}\lambda_m\int_0^t\tilde{E}_n'(u,e_m)d\beta_m(s),
\end{equation*}
where
\begin{equation*}
A(s)=\du \tilde{E}_n(u,\dy u)+\frac{1}{2}\sum_{m\in\Z}\lambda_m^2\partial_u^2\tilde{E}_n(u,e_m).
\end{equation*}
Denote by $\Lambda_t(a,\omega)$ its local time which reads (see the identity $(A.45)$ of \cite{KS12})
\begin{align*}
\Lambda_t(a,\omega) &=(\tilde{E}_n(u(t))-a)_+-(\tilde{E}_n(u(0))-a)_+-\alpha\int_0^tA(s)\Bbb 1_{(a,+\infty)}(\tilde{E}_n(u))ds \\ &-\sqrt{\alpha}\sum_{m\in\Z}\lambda_m\int_0^t\Bbb 1_{(a,+\infty)}(\tilde{E}_n(u))\tilde{E}_n'(u,e_m)d\beta_m(s).
\end{align*}
Using the stationarity of $u$, we infer that
\begin{equation}\label{id_1}
\E\Lambda_t(a)=-\alpha t\E [A(0)\Bbb 1_{(a,+\infty)}(\tilde{E}_n(u))].
\end{equation}
Now using the (local time) identity $A.44$ of \cite{KS12}  with the function $\Bbb 1_\Gamma,$ we get
\begin{equation*}
2\int_\Gamma \Lambda_t(a)da=\alpha\sum_{m\in\Z}\lambda_m^2\int_0^t\Bbb 1_\Gamma(\tilde{E}_n(u))\tilde{E}_n'(u,e_m)^2ds.
\end{equation*}
The stationarity of $u$ gives again
\begin{equation}\label{id_2}
2\int_\Gamma \E\Lambda_t(a)da=\alpha t\sum_{m\in\Z}\lambda_m^2\E[\Bbb 1_\Gamma(\tilde{E}_n(u))\tilde{E}_n'(u,e_m)^2].
\end{equation}
Comparing $(\ref{id_1})$ and $(\ref{id_2})$, we find
\begin{equation}\label{id_final}
\sum_{m\in\Z}\lambda_m^2\E[\Bbb 1_\Gamma(\tilde{E}_n(u))\tilde{E}_n'(u,e_m)^2]\leq 2\lambda(\Gamma)\E|A(0)|\leq C\ell(\Gamma).
\end{equation}
Recall now the form of $\tilde{E}_n(u):$
\begin{equation*}
\tilde{E}_n(u)=\|u\|_n^2+R_n(u)+P_n(\|u\|^2),
\end{equation*}
where
\begin{equation*}
P_n(r)=c_nr(1+r)^{b_n}.
\end{equation*}
Then
\begin{equation*}
\tilde{E}_n'(u,v)=2(D^nu,D^nv)+R_n'(u,v)+2(u,v)P_n'(\|u\|^2).
\end{equation*}
Recalling Remark $\ref{remark_nouvelles_lois}$, we have
\begin{equation}\label{minorationnn}
\tilde{E}_n'(u,u)\geq \|u\|_n^2.
\end{equation}
Now we define the operator $H_n$ so that
\begin{equation*}
\tilde{E}_n'(u,v)=(H_nu,v).
\end{equation*}
Therefore
\begin{align*}
(H_nu,u) &=\sum_{m\in\Z}u_m(H_nu,e_m)\\
&=\sum_{|m|\leq N}u_m(H_nu,e_m)+\sum_{|m|>N}u_m(H_nu,e_m)\\
&\leq \frac{\|u\|}{\underline{\lambda}_N}\left(\sum_{|m|\leq N}\lambda_m^2(H_nu,e_m)^2\right)^{\frac{1}{2}}+\|H_nu\|\left(\sum_{|m|>N} u_m^2\right)^{\frac{1}{2}}\\
&\leq \frac{\|u\|_1}{\underline{\lambda}_N}\left(\sum_{m\in\Z}\lambda_m^2\tilde{E}_n'(u,e_m)^2\right)^{\frac{1}{2}}+\|H_nu\|\frac{\|u\|_1}{N},
\end{align*}
where $\underline{\lambda}_N=\min\{\lambda_m,\ \ |m|\leq N\}>0$ for any $N>0$. We take into account $(\ref{minorationnn})$ and consider $u$ belonging to 
\begin{equation*}
K_\epsilon =\left\{v:\ \ \|v\|\geq \epsilon,\ \ \|H_nv\|\leq \frac{1}{\epsilon}\right\}.
\end{equation*}
We get
\begin{equation*}
\sum_{m\in\Z}\lambda_m^2\tilde{E}_n'(u,e_m)^2\geq \underline{\lambda}_N^2\left(\epsilon-\frac{1}{N\epsilon}\right)^2.
\end{equation*}
The integer $N$ can be chosen to depend on $\epsilon$ so that we have 
$$\alpha(\epsilon):=\underline{\lambda}_N^2\left(\epsilon-\frac{1}{N\epsilon}\right)^2>0.$$ 
Then, by $(\ref{id_final})$
\begin{equation*}
\mu(\{u:\ \tilde{E}_n(u)\in \Gamma\}\cap K_\epsilon)\leq \frac{C}{\alpha(\epsilon)}\ell(\Gamma).
\end{equation*}
Consider now the complementary set 
\begin{equation*}
K_\epsilon^c=\left\{u:\ \|u\|< \epsilon\ \ \ \text{or}\ \ \ \|H_nu\|>\frac{1}{\epsilon}\right\}
\end{equation*}
Since
\begin{equation*}
\E\|H_nu\|\leq \text{Const}.
\end{equation*}
Using the Chebyshev inequality, we find

\begin{equation*}
\mu_\alpha\left(\left\{u:\ \|H_nu\|> \frac{1}{\epsilon}\right\}\right)\leq \text{Const}\ \epsilon.
\end{equation*}
By Theorem $\ref{kuksin_thm}$, we have that
\begin{equation*}
\mu_\alpha(\{u:\ \|u\|<\epsilon\})\leq C\epsilon.
\end{equation*}
Finally we write
\begin{align*}
\mu_\alpha(\{u:\ \tilde{E}_n(u)\in\Gamma\})&\leq \mu(\{u:\ \tilde{E}_n(u)\in\Gamma\}\cap K_\epsilon)+\mu(K_\epsilon^c)\\ &\leq \frac{C_1}{\alpha(\epsilon)}\ell(\Gamma)+C_2\epsilon.
\end{align*}
This, combined with the Portmanteau theorem, proves the absolute continuity of $\tilde{E}_n(u)$ under $\mu$ w.r.t. the Lebesgue measure on $\R.$
\end{proof}

\subsection*{About the dimension of the measure $\mu$}

This subsection is inspired by \cite{kuksin_nondegeul,KS12} where it was proved that the invariant measures constructed for the Euler equation are not concentrated on a countable union of finite-dimensional compact sets. The proof relies on a Krylov estimate (see section $A.9$ of \cite{KS12} and the original paper \cite{kry}) for Itô processes. Roughly speaking, this estimate provides an inequality of the type $(\ref{id_final})$ for multi-dimensional processes. Namely, for a $d-$dimensional stationary Itô process 
\begin{equation*}
y_t=y_0+\int_0^tx_sds+\sum_{j=1}^\infty\int_0^t\theta_j(s)d\beta_j(s),
\end{equation*}
define the non-negative $d\times d-$matrix $\sigma$ with entries
\begin{equation*}
\sigma_{m,n}=\sum_{j=1}^\infty\theta_j^m\theta_j^n,
\end{equation*}
where $\theta_j^i$ is the $i$-th component of the $d$-vector $\theta_j.$
Let $f:\R^d\to\R$ be a bounded measurable function. Then the Krylov estimate is 
\begin{equation}\label{Krylov-est}
\E\int_0^1f(y_t)(\det\sigma_t)^{1/d}dt \leq C_d|f|_d\E\int_0^1|x_t|dt,
\end{equation}
where $|.|_d$ stands for the $L^d-$norm and $C_d$ is a constant that only depends on $d$.

In our context the independence needed to make the Krylov estimate successful leads to solving nonlinear differential equations with order increasing with the size of the underlying vector (process). This is due to the structure of the BO conservation laws and represents a technical difficulty as discussed in the introduction, while in the Euler case the components of this vector can be chosen to satisfy this independence. We bypass the equation mentioned above in the $2$-dimensional case by splitting suitably the phase space. 

\begin{thm}\label{Hausd}
The measure $\mu$ is  of at least $2$-dimensional nature in the sense that any compact set of Hausdorff dimension smaller than $2$ has $\mu$-measure $0$.
\end{thm}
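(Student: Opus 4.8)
The plan is to reduce the statement to a two-dimensional absolute-continuity property and then invoke the elementary fact that Lipschitz maps do not raise Hausdorff dimension. Concretely, I would build a locally Lipschitz map $\Phi=(\Phi_1,\Phi_2)\colon H^3(\T)\to\R^2$ and show that the push-forward $\Phi_*\mu$ is absolutely continuous with respect to the Lebesgue measure on $\R^2$, at least on the region where $\Phi$ is non-degenerate. Granting this, if $K$ is compact with $\dim_H K<2$, then $\Phi(K)$ is compact with $\dim_H\Phi(K)\le\dim_H K<2$, hence $\mathrm{Leb}_2(\Phi(K))=0$; since $K\subset\Phi^{-1}(\Phi(K))$ this forces $\mu(K)\le\Phi_*\mu(\Phi(K))=0$. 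Thus the whole content is the two-dimensional absolute continuity, which I would obtain by upgrading the one-dimensional local-time computation behind Theorem \ref{non_den_plus} to a genuinely two-dimensional estimate via the Krylov estimate for Itô processes (section $A.9$ of \cite{KS12}), exactly as in the Euler setting of \cite{kuksin_nondegeul,KS12}.

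For the components I would take two of the modified conservation laws, e.g. $\Phi_1=E_0(u)=\|u\|^2$ and $\Phi_2=\tilde E_1(u)$ (or more generally $\tilde E_n,\tilde E_m$). The point of using conservation laws is that the Benjamin--Ono drift $\du\Phi_i(u,-H\dy u-u\dx u)$ vanishes, so that, exactly as in the derivation of $(\ref{id_final})$, the Itô differential of the vector process $Y_t=(\Phi_1(u_t),\Phi_2(u_t))$ under the viscous stationary measure $\mu_\alpha$ has drift of order $\alpha$ and diffusion matrix $\alpha\,a(u)$, with $a_{ij}(u)=\sum_{m\in\Z}\lambda_m^2\,\Phi_i'(u,e_m)\Phi_j'(u,e_m)$. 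Feeding $Y_t$ into the Krylov estimate with $f=\Bbb 1_\Gamma$ and using stationarity, which I expect to cancel the factors of $\alpha$ and $t$ exactly as in the one-dimensional identity $(\ref{id_final})$, I would reach
\begin{equation*}
\E\big[\Bbb 1_\Gamma(\Phi(u))\,(\det a(u))^{1/2}\big]\le C\,\mathrm{Leb}_2(\Gamma)^{1/2},
\end{equation*}
with $C$ independent of $\alpha$, and then pass to the limit $\alpha\to0$ by the Portmanteau theorem, just as in the proof of Theorem \ref{non_den_plus}. Restricting to the sets $A_\delta=\{\det a\ge\delta\}$ this yields $\mu(\{\Phi(u)\in\Gamma\}\cap A_\delta)\le C\delta^{-1/2}\mathrm{Leb}_2(\Gamma)^{1/2}$, i.e. the desired absolute continuity on every non-degenerate piece.

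The main obstacle is precisely the non-degeneracy, i.e. proving $\det a(u)>0$ for $\mu$-almost every $u$. Since $a(u)$ is a Gram matrix, $\det a>0$ amounts to linear independence of the $\lambda$-weighted gradient vectors $(\lambda_m\Phi_1'(u,e_m))_m$ and $(\lambda_m\Phi_2'(u,e_m))_m$, and since to leading order in $m$ the gradient $\Phi_n'(u,e_m)$ behaves like $m^{2n}u_m$ (the contribution of the homogeneous norm $\|u\|_n^2$), independence should hold as soon as $u$ carries at least two active frequencies. However, the lower-order terms coming from $R_n$ and $P_n$ turn the collinearity condition into a nonlinear differential relation, whose resolution is the difficulty flagged in the introduction to this subsection. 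Rather than solving it, I would split the phase space: separate low and high frequency ranges so that on the high-frequency block the leading terms $m^{2n}u_m$ dominate and force independence, while the finitely many low frequencies are treated directly. This phase-space splitting is what replaces the Euler-type choice of independent components, and it is the technical heart of the argument.

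Finally I would assemble the pieces. The degenerate set $\{\det a=0\}$ is shown to be $\mu$-null using the one-dimensional local-time estimate of Theorem \ref{non_den_plus} applied to the block observables, together with the non-degeneracy of $\|u\|$ from Theorem \ref{kuksin_thm} which already excludes $u\equiv 0$. Then, for a compact $K$ with $\dim_H K<2$, one writes $\mu(K)=\mu(K\cap\{\det a>0\})+\mu(K\cap\{\det a=0\})$ and finds both terms zero: the first by letting $\delta\to0$ in the absolute-continuity estimate applied to the Lebesgue-null set $\Phi(K)$, the second by the preceding step. This completes the reduction and establishes the claim.
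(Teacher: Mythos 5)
Your skeleton is the same as the paper's (Krylov estimate for the Gram matrix of a two-component conserved vector, uniformity in $\alpha$, Portmanteau, and the fact that Lipschitz maps do not raise Hausdorff dimension), and your remark that choosing BO-conserved observables keeps the drift of order $\alpha$ under the stationary viscous measure is correct; the Krylov step itself is fine modulo normalization of exponents. But the step you yourself call the technical heart --- non-degeneracy of the $2\times 2$ diffusion matrix --- is a genuine gap, and your proposed fix would not close it. With $\Phi=(E_0,\tilde E_1)$, degeneracy of the Gram matrix at $u$ means that for some $\gamma\neq 0$ one has $\gamma_1 u+\gamma_2\bigl(-2\dy u+\tfrac32 uH\dx u-\tfrac34 H\dx (u^2)+\tfrac12 u^3+\cdots\bigr)\equiv\text{const}$: a nonlinear differential equation (of order $2n$ if you use $\tilde E_n$) with a solution set that is neither empty nor visibly $\mu$-null --- exactly the obstruction the paper flags in the introduction to this subsection. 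A low/high frequency splitting does not engage this set: it is cut out by a global ODE, not by frequency support, and the nonlinear gradient terms $R_n'(u,e_m)$ couple all modes, so "the leading terms $m^{2n}u_m$ dominate" is not a statement you can localize to a block; even granting a good region, you would still owe a proof that the degenerate set is $\mu$-null. Your fallback does not supply one: Theorem $\ref{non_den_plus}$ makes individual level sets $\{\tilde E_n(u)=c\}$ null, but the degenerate set is not contained in countably many such level sets, and Theorem $\ref{kuksin_thm}$ only controls $\mu(\{\|u\|\leq\delta\})$, i.e.\ excludes the single point $u\equiv 0$.

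The paper avoids the nonlinear ODE altogether by a different, structural splitting of the phase space: $H^2=O\cup O^c$ with $O=\{u:\ \int u^2H\dy u=0\}$ as in $(\ref{set_of_preserv})$, together with two \emph{different} pairs of observables. On $O^c$ it uses the genuine conservation laws $(E_0,E_{1/2})$; there the degeneracy condition is the first-order relation $\gamma u+2H\dx u+u^2\equiv\text{const}$, which forces $\int u^pH\dy u=0$ for all $p$, i.e.\ $u\in O$ --- so the matrix is non-degenerate everywhere on $O^c$. On $O$ it uses the moments $F_j(u)=\frac{1}{j+1}\int u^{j+1}$, $j=1,2$: $F_2$ is \emph{not} conserved by BO, but its drift $-(u^2,H\dy u-\alpha\dy u)+O(\alpha)$ is of order $\alpha$ precisely on $O$ (this is why the splitting set has this particular form), and degeneracy there is the algebraic relation $\gamma_1u+\gamma_2u^2\equiv\text{const}$, which for mean-zero $u$ forces $u\equiv 0$ (Lemma $\ref{definiii}$); the point $u=0$ and the regions where $\|u\|_1$ is small or $\|u\|_2$ large are then handled by the compact exhaustion $A_\epsilon$ and Theorem $\ref{kuksin_thm}$, as in the proof of Theorem $\ref{non_den_plus}$. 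Each piece yields absolute continuity of the push-forward of the restricted measures $\mu_\alpha^{O}$, $\mu_\alpha^{O^c}$ uniformly in $\alpha$ (Proposition $\ref{Vect_abs_cont}$), and the Hausdorff conclusion follows by applying the Lipschitz-image argument separately to $F(W\cap O)$ and $E(W\setminus O)$. To rescue your version you would have to either solve, or rule out $\mu$-a.e., the nonlinear ODE attached to your pair of energies; the paper's trade of $(E_0,\tilde E_1)$ for the pair of vectors $(F_1,F_2)$ and $(E_0,E_{1/2})$, glued along $O$, is the missing idea.
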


Before proving Theorem $\ref{Hausd}$, we describe the general framework.

We use the following splitting of $H^2(\T)$:
\begin{equation*}
H^2(\T)=O\cup O^c,
\end{equation*}
where
\begin{equation}\label{set_of_preserv}
O:=\left\{u:\ \ \int u^2H\dy u=0\right\}.
\end{equation}
 Consider the functionals on $\dot{H}^1(\T)$ defined by
\begin{equation*}
F_j(u)=\frac{1}{j+1}\int u^{j+1}, \ \ j=1,2.
\end{equation*}
Remark that $F_1$ is preserved by the BO equation. Now for $u$ a solution of $(\ref{BO})$, we have that 
\begin{equation*}
\dt F_2(u)=0 \ \ \ \text{on $O$.}
\end{equation*} 
Therefore the vector $F(u)=(F_1(u),F_2(u))$ is constant on $O$ for any solution $u$ of the BO equation.

On the other hand, consider the following BO conservation laws
\begin{align*}
E_0(u) &=\int u^2\\
E_{1/2}(u) &=\int uH\dx u+\frac{1}{3}\int u^3.
\end{align*}
Set the following preserved vector
\begin{equation*}
E(u)=\left(E_0(u),E_{1/2}(u)\right).
\end{equation*}
$E(u)$ is in particular constant on $O^c$ for the solutions of $(\ref{BO}).$

Let $\mu_1$ and $\mu_2$ be two measures. We write $\mu_1\lhd\mu_2$ if there is a continuous increasing function $f$ vanishing at $0$ such that
\begin{equation*}
\mu_1(.)\leq f(\mu_2(.)).
\end{equation*}
This implies the absolute continuity of $\mu_1$ w.r.t. $\mu_2.$
For $\nu$ a probability measure on $H^2$, we define

\begin{align*}
\nu^O(.)&=\nu(.\cap O),\\
\nu^{O^c}(.)&=\nu(.\cap O^c),
\end{align*} 
where $O$ is the set described before.
\begin{prop}\label{Vect_abs_cont}
Suppose $\lambda_m\neq 0$ for all $m\in\Z$, then
\begin{enumerate}
\item \begin{equation*}
F_*{\mu^O_\alpha}\lhd \ell_2, \ where\ F=(F_1,\ F_2),
\end{equation*}
\item 
\begin{equation*}
E_*{\mu^{O^c}_\alpha}\lhd \ell_2,\ where\ E=(E_0,\ E_{1/2}).
\end{equation*}
\end{enumerate}
The functions describing the absolute continuity do not depend on $\alpha$ and $\ell_2$ is the Lebesgue measure on $\R^2.$
\end{prop}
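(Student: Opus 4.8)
The plan is to establish both assertions by one and the same scheme, working at a fixed $\alpha\in(0,1)$ and producing controlling functions that are \emph{independent} of $\alpha$. The engine is the multidimensional Krylov estimate (Section $A.9$ of \cite{KS12}) applied to a BO-conserved two-dimensional observable, combined with the invariance of $\mu_\alpha$; it plays the role that the one-dimensional local-time identities $(\ref{id_1})$--$(\ref{id_final})$ played in Theorem $\ref{non_den_plus}$. For the first claim I would run the scheme with $F=(F_1,F_2)$ on $O$, for the second with $E=(E_0,E_{1/2})$ on $O^c$. Two different observables are needed because the Krylov/stationarity computation yields an $\alpha$-uniform bound only when the It\^o drift of the observable is $O(\alpha)$; this forces the use of a conserved vector, and $F_2=\tfrac13\int u^3$ is conserved only on $O$, whereas $E$ is conserved everywhere.

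Concretely, apply the It\^o formula to $F(u(t))$ under $(\ref{BOBf})$. Since $F_1$ is conserved and $F_2$ is conserved on $O$, the drift reduces on $O$ to the viscous and It\^o-correction terms, both carrying a factor $\alpha$, while the martingale part has diffusion matrix $a^F_{ij}=\alpha\sum_m\lambda_m^2 F_i'(u,e_m)F_j'(u,e_m)$ with $F_1'(u,e_m)=(u,e_m)$ and $F_2'(u,e_m)=(u^2,e_m)$. Hence $\det a^F=\alpha^2\,G_\lambda(u,u^2)$, where $G_\lambda(f,g)$ is the Gram determinant of $f,g$ in the weighted inner product $\langle f,g\rangle_\lambda=\sum_m\lambda_m^2 f_m g_m$. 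By Cauchy--Schwarz this determinant is nonnegative and vanishes exactly when $c_1u+c_2u^2$ is constant for some $(c_1,c_2)\neq0$; since $\{e_m\}$ spans the mean-zero functions, for smooth mean-zero $u$ this forces $u\equiv0$. The parallel computation for $E$ gives $\det a^E=4\alpha^2\,G_\lambda(u,\,2H\dx u+u^2)$, whose degeneracy set is cut out by the nonlinear pseudo-differential relation $c_1u+c_2(2H\dx u+u^2)=\mathrm{const}$. The whole point of the splitting is to avoid analyzing this relation: on $O$ we use $F$, whose degeneracy is the purely algebraic condition above, and we only ever need $E$ to be non-degenerate on $O^c$.

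Feeding these processes into the Krylov estimate, for nonnegative Borel $g$ on $\R^2$ one controls the occupation density of $F(u(t))$ weighted by $(\det a^F)^{1/2}$; replacing the time average by an integral against $\mu_\alpha$ by invariance, and noting that the Krylov constant is governed by the $O(\alpha)$ drift while $(\det a^F)^{1/2}=\alpha\,G_\lambda(u,u^2)^{1/2}$, the powers of $\alpha$ cancel and one is left with
\begin{equation*}
\int g(F(u))\,G_\lambda(u,u^2)^{1/2}\,\mu_\alpha^O(du)\lleq \|g\|_{L^2(\R^2)},
\end{equation*}
uniformly in $\alpha$, the implicit constant being controlled by the $\mu_\alpha$-moments of the drift through $(\ref{estim_esper_viscn>2})$. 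Then, exactly as in Theorem $\ref{non_den_plus}$, I would introduce a set $A_\epsilon$ bounding $\|u\|$ from below and an auxiliary higher norm from above, on which one proves a quantitative lower bound $G_\lambda(u,u^2)\ge c(\epsilon)>0$ by splitting Fourier modes $|m|\le N$ and $|m|>N$. Taking $g=\Bbb 1_\Gamma$ gives
\begin{equation*}
\mu_\alpha^O(\{F(u)\in\Gamma\}\cap A_\epsilon)\lleq c(\epsilon)^{-1/2}\,l_2(\Gamma)^{1/2},
\end{equation*}
free of $\alpha$, and the same steps with $E$ on $O^c$ yield the analogous bound once $G_\lambda(u,2H\dx u+u^2)\ge c(\epsilon)$ is secured there.

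It remains to dispose of the complement $A_\epsilon^c$. Near $u=0$ the small-ball bound of Theorem $\ref{kuksin_thm}$ gives $\mu_\alpha(\{\|u\|<\epsilon\})\lleq\epsilon$ uniformly, and any auxiliary constraint of the form $\|A u\|>1/\epsilon$ is handled by Chebyshev together with the moment bounds, precisely as at the end of the proof of Theorem $\ref{non_den_plus}$. Combining the two regimes and optimizing in $\epsilon$ produces a continuous increasing $f$ with $f(0)=0$ and $\mu_\alpha^O(F^{-1}\Gamma)\le f(l_2(\Gamma))$, i.e. $F_*\mu_\alpha^O\lhd l_2$, with $f$ independent of $\alpha$; the argument for $E_*\mu_\alpha^{O^c}\lhd l_2$ is identical. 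I expect the genuine difficulty to be the quantitative non-degeneracy, namely the lower bound on the weighted Gram determinant off a small neighborhood of the degeneracy set, and in particular checking that the degeneracy set of $E$ does not obstruct the estimate on $O^c$; this is exactly what the $O/O^c$ splitting is designed to achieve, trading the intractable differential relation for a single global observable against the algebraic relation for $F$ on $O$. A secondary technical point is the careful $\alpha$-tracking in the Krylov estimate, which succeeds only because the chosen observables are conserved, so that their drift is $O(\alpha)$ and the factors of $\alpha$ cancel against $\det a\sim\alpha^2$.
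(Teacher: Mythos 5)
Your overall scheme coincides with the paper's: It\^o's formula applied to the conserved two-dimensional observables, an $O(\alpha)$ drift coming from conservation ($F$ on $O$, $E$ globally), the Krylov estimate with the powers of $\alpha$ cancelling between the drift and $\det a\sim\alpha^2$, non-degeneracy of the weighted Gram determinant, and the $A_\epsilon$/$A_\epsilon^c$ splitting closed by the small-ball bound of Theorem \ref{kuksin_thm} and Chebyshev, exactly as at the end of Theorem \ref{non_den_plus}. But one of the two steps you defer is where the actual content of part 2 lies, and you even orient yourself away from it: you write that ``the whole point of the splitting is to avoid analyzing'' the relation $c_1u+c_2(2H\dx u+u^2)\equiv\mathrm{const}$, which is backwards. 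The paper's proof of part 2 consists precisely in analyzing it: differentiating gives $\gamma\dx u+2H\dy u+2u\dx u\equiv 0$, and testing against $u^p$ (the terms $\int u^p\dx u$ and $\int u^{p+1}\dx u$ vanish as exact derivatives) yields $\int u^pH\dy u=0$ for every $p>0$, in particular $u\in O$. The set $O$ in $(\ref{set_of_preserv})$ is \emph{defined} so as to contain the degeneracy locus of the Gram matrix of $E$; without this short computation there is no reason why $\det M(u)\neq 0$ on $O^c$, and the second assertion of Proposition \ref{Vect_abs_cont} remains unproved.

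The second gap is your proposed route to the quantitative lower bound $G_\lambda\geq c(\epsilon)$ on $A_\epsilon$ ``by splitting Fourier modes $|m|\leq N$ and $|m|>N$.'' That argument worked in Theorem \ref{non_den_plus} because of the coercivity $\tilde{E}_n'(u,u)\geq\|u\|_n^2$ of a single fixed functional; for the $2\times 2$ Gram determinant one must bound $\inf_{|\gamma|=1}\sum_m\lambda_m^2\bigl(\gamma_1F_1'(u,e_m)+\gamma_2F_2'(u,e_m)\bigr)^2$ from below, and no coercive quantity uniform in the direction $\gamma$ is available: near the degenerate direction the quadratic form is small without $u$ being small, so mode-splitting alone cannot produce $c(\epsilon)$. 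The paper instead takes $A_\epsilon=\{\|u\|_1^2\geq\epsilon,\ \|u\|_2^2\leq 1/\epsilon\}$, observes that $A_\epsilon\cap O$ is compact in $H^1$ and avoids $0$, and since $D(u)=\det\sigma(u)$ is continuous on $H^1$ and strictly positive off $0$ by Lemma \ref{definiii}, concludes $D\geq c_\epsilon$ on $A_\epsilon\cap O$ by compactness (and similarly with $\det M$ on the $O^c$ side). You correctly flagged this as ``the genuine difficulty,'' but the tool you name for it would fail; compactness, or an equivalent quantitative rigidity statement, is what closes it. (Your form of the Krylov estimate, with $(\det a)^{1/2}$ against $\|g\|_{L^2(\R^2)}$, is fine and if anything more precise than the paper's display; that discrepancy is immaterial to the conclusion.)
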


\begin{proof}[Proof of Theorem \ref{Hausd}]
Let $W$ be an open set of $H^2.$ Clearly
\begin{equation*}
W=(W\cap O)\cup (W\backslash O).
\end{equation*}
By Proposition $\ref{Vect_abs_cont},$ we have
\begin{equation*}
\mu_\alpha(W)\leq f(\ell_2(F(W\cap O)))+g(\ell_2(E(W\backslash O))),
\end{equation*}
where $f$ and $g$ are the functions describing the absolute continuity established in Proposition $\ref{Vect_abs_cont}.$
Using the Portmanteau theorem, we get
\begin{equation}\label{HAUSSD}
\mu(W)\leq f(\ell_2(F(W\cap O)))+g(\ell_2(E(W\backslash O))),
\end{equation}
and by the regularity of $\mu$ and $\ell_2$ the estimate $(\ref{HAUSSD})$ holds for any bounded Borelian set $W$.

When $W$ is a compact set of Hausdorff dimension $\mathcal{H}(W)<2.$ It is clear that $E$ and $F$ are Lipschitz on any compact set. Since the Lipschitz maps do not increase the Hausdorff dimension, we have the right hand side of $(\ref{HAUSSD})$ is equal to $0$, then so is the left hand side.
\end{proof}

\begin{proof}[Proof of Proposition $\ref{Vect_abs_cont}$]
The proof consists of two steps:

\paragraph{$1.$ \textbf{Absolute continuity uniformly in $\alpha$ of $\mu$ on the set $O:$}}

 The first and second derivatives of the functionals $F_j(u)$ are
\begin{align*}
F_j'(u,v) &=\int u^jv,\\
F_j''(u,v) &=j\int u^{j-1}v^2.
\end{align*}
Then applying the Itô formula to $F_j,$ we find
\begin{equation*}
F_j(u)=F_j(u(0))+\int_0^tA_j(s)ds+\sqrt{\alpha}\sum_{m\in\Z}\lambda_m\int_0^t(u^j,e_m)d\beta_m(s),\ \ j=1,2.
\end{equation*}
where
\begin{equation*}
A_j=-(u^j,H\dy u-\alpha\dy u)+j\frac{\alpha}{2}\sum_{m\in\Z}\lambda_m^2(u^{j-1},e_m^2).
\end{equation*}
On the set $O$, we have $(u^j,H\dy u)=0$ $j=1,2$. Then recalling estimate on $\E\|u\|_2^2$ (Theorem $\ref{thm_inv_mes}$), we get
\begin{equation}\label{haus_est_drift}
\E|A_j|\leq \alpha\text{Const},
\end{equation}
where Const does not depend on $\alpha.$

We consider the $2\times2$-matrix $\sigma(u),\ u\in O$ with entries
\begin{equation*}
\sigma_{k,l}(u)=\sum_{m\in\Z}\lambda_m^2(u^k,e_m)(u^l,e_m), \ \ k,l=1,2.
\end{equation*}
It is clear that $\sigma$ is non-negative.
It follows from the Krylov estimate $(\ref{Krylov-est})$ with the use of the function $\Bbb 1_\Gamma$, $\Gamma$ being a Borel set of $\R^2,$ that
\begin{equation}\label{Krylov}
\E\left[(\det(\sigma(u)))^{1/2}\Bbb 1_\Gamma(F)\right] \leq C\ell_2(\Gamma),
\end{equation}
where $\ell_2$ is the Lebesgue measure on $\R^2$ and $C$ does not depend on $\alpha.$

Now define the map
\begin{equation*}
\fonction{D:\dot{H}^1(\T)}{\R_+}{u}{\det(\sigma(u))}.
\end{equation*}
We remark that $D$ is continuous since it is the composition of continuous maps. 
\begin{nem}\label{definiii}
Suppose $\lambda_m\neq 0$ for all $m\in\Z,$ then
\begin{equation*}
D(u)=0\Rightarrow u\equiv 0.
\end{equation*}
\end{nem}
\begin{proof}
Suppose there is a nonzero vector $\gamma=(\gamma_1,\gamma_2)\in\R^2$ such that
\begin{align*}
&\gamma\sigma(u)\gamma^T=0,
\end{align*}
then
\begin{align*}
\sum_{m\in\Z}\lambda_m^2\left(\sum_{j=1}^2\gamma_j(u^j,e_m)\right)^2=0.
\end{align*}
Since $\lambda_m\neq 0$ for all $m\neq 0,$ we infer that
\begin{equation*}
\sum_{j=1}^2\gamma_ju^j\equiv\text{Const},
\end{equation*}
which is possible only if $u\equiv 0,$ taking into account that $\int u=0.$ 
\end{proof}

Now define the set
\begin{equation*}
J_\epsilon =\left\{\|u\|_1^2\geq \epsilon,\ \ \|u\|_2^2\leq \frac{1}{\epsilon}\right\}\subset H^2(\T).
\end{equation*}
$J_\epsilon\cap O$ is a compact set in $H^1(\T)$ not containing $0$, then by the continuity of $D$, we have $D(J_\epsilon\cap O)$ is a compact set in $\R_+$ not containing $0$. Then there is $c_\epsilon>0$ such that $D(u)\geq c_\epsilon$ for all $u\in J_\epsilon \cap O.$ Using the same splitting argument as in the proof of Theorem $\ref{non_den_plus}$, we arrive at the claimed result.

\paragraph{$2.$ \textbf{Absolute continuity uniformly in $\alpha$ of $\mu$ on the set $O^c:$}}

We follow the construction above to set a $2\times2$-matrix $M$ with entries
\begin{equation*}
M_{k,l}(u)=\sum_{m\in\Z}\lambda_m^2B_k(u)B_l(u), \ \ k,l=1,2,
\end{equation*}
where
$$B_1=E_0'(u,e_m) \ \ \ \text{and}\ \ \ B_2=E'_{1/2}(u,e_m).$$
It follows from the Krylov estimate $(\ref{Krylov-est})$ that
\begin{equation*}
\E\left[(\det(M(u)))^{1/2}\Bbb 1_\Gamma(E)\right] \leq C\ell_2(\Gamma),
\end{equation*}
where $C$ does not depend on $\alpha$ thanks to the preservation of $E_0$ and $E_{1/2}$ by the BO flow.

Now $\det M(u)=0$ only if there is a nonzero vector $(\gamma_1,\gamma_2)\in\R^2$ such that

\begin{equation*}
\gamma_1 u+\gamma_2(2H\dx u+u^2)\equiv \text{Const}.
\end{equation*}
Note that if $\gamma_2=0$ we have that $u\equiv 0$ since $\int u=0$, therefore $u\in O$. Now we suppose that $\gamma_2\neq 0$, we differentiate w.r.t. $x$ to find
\begin{equation*}
\gamma_1\dx u+\gamma_2(2H\dy u+2u\dx u)\equiv 0.
\end{equation*}
Therefore, multiplying by $u^p$ for $p>0$ and integrating in $x$, we find
\begin{equation*}
\int u^pH\dy u=0,
\end{equation*}
and in particular $u$ belongs to the set $O$. Then on $O^c$ we have $\det(M(u))\neq 0.$ We can follow the same splitting argument with the use of the splitting set $J_\epsilon$ defined in the first part to get the result.
\end{proof}
\subsection*{A Gaussian decay property for the measure $\mu$}
Here we establish a large deviation bound for the measure $\mu$.
\begin{thm}\label{thm_gauss_dec}
The measure $\mu$ constructed in Theorem $\ref{thm_inv_mes}$ satisfies 
\begin{equation}
\E e^{\sigma\|u\|^2}<\infty,\label{gauss_control}
\end{equation}
where $\sigma=(aeA_0)^{-1}$ for arbitrary $a>1$.
In particular, for any $r>0$
\begin{equation*}
\mu(\{u\in C^\infty:\ \|u\|>r\})\leq C e^{-\sigma r^2},
\end{equation*}
where the constant $C$ does not depend on $r$.
\end{thm}
\begin{proof}
Recall the estimate $(\ref{estim_esp_lp})$:
\begin{align*}
\E\|u\|^{2p}\leq p^pA_0^p,
\end{align*}
then
\begin{equation*}
\E(\sqrt{\sigma}\|u\|)^{2p}\leq \sigma^p p^pA_0^p=\frac{p^p}{a^pe^p}.
\end{equation*}
Now, with use of the Stirling formula, we have
\begin{equation*}
\frac{\E(\sqrt{\sigma}\|u\|)^{2p}}{p!}\leq \frac{p^p}{p!a^pe^p}\sim_{p\to\infty} \frac{1}{a^p\sqrt{2\pi p}}.
\end{equation*}
Since $a>1$, we have that the series $$\sum_{p\geq 1}\frac{\E(\sqrt{\sigma}\|u\|)^{2p}}{p!}$$
is convergent, and we are led to $(\ref{gauss_control}).$ The other claim is obtained after combining $(\ref{gauss_control})$ with the Chebyshev inequality.
\end{proof}

\begin{rmq}
We obtain in a same way the result of Theorem $\ref{thm_gauss_dec}$ for the viscous measures uniformly in $\alpha.$

\end{rmq}

\section{Appendix}

\subsection*{Proof of Lemma $\ref{estim_123}$}
Note first that for a solution $v$ of the nonlinear equation $(\ref{equ_nonlin}),$ we have
\begin{equation}
\dt E_n(v)=E_n'(v,\dt v)=\alpha E'_n(v,\dy v)-\sqrt{\alpha}E_n'(v,\dx (vz))-\alpha\frac{1}{2}E_n'(v,\dx(z^2)),\ n=0,1,2.\label{Determinist_Ito!!}
\end{equation}
The $E_n$ are the first three conservation laws of the BO equation.
\paragraph{The case $n=0:$}
 $E_0'(v,w)=2\int vw.$ Applying $(\ref{Determinist_Ito!!})$, we get

\begin{align*}
\dt E_0(v)+2\alpha\|v\|_1^2 &=2\sqrt{\alpha}(v,\dx (vz))+\alpha (v,\dx z^2)\\
&=\sqrt{\alpha}(v^2,\dx z)+\alpha(v,\dx z^2)\\
&\leq \sqrt{\alpha}\|z\|_{\frac{3+}{2}}\|v\|^2+c\alpha\|v\|\|z\|_1^2\\
&\leq \sqrt{\alpha}\|z\|_{\frac{3+}{2}}\|v\|^2+c\alpha(1+\|v\|^2)\|z\|_1^2.
\end{align*}

Note that $\|z(.)\|_{\frac{3+}{2}}$ is bounded uniformly in $\alpha$  for almost all realizations and in $t$ (on $[0,T]$) by continuity, then with the use of the Gronwall inequality we get

\begin{equation}\label{e1}
\sup_{t\in[0,T]}\|v(t)\|^2+2\alpha\int_0^T\|v(t)\|_1^2dt\leq C(T,\omega,\|v_0\|).
\end{equation}

\paragraph{The case $n=1:$}
Recall that
\begin{align*}
E_1(u)&=\int (\dx u)^2+\frac{3}{4}\int u^2H\dx u+\frac{1}{8}\int u^4.
\end{align*}
Then
\begin{equation*}
E_1'(v,w)=-2(\dy v,w)+\underbrace{\frac{3}{2}(vH\dx v,w)+\frac{3}{4}(v^2,H\dx w)+\frac{1}{2}(v^3,w)}_{R_1'(v,w)}.
\end{equation*}
It was already shown that (see the more general estimates $(\ref{estim_r_n_2,j})$ and $(\ref{estim_R_n^1})$)

\begin{equation*}
|R_1'(v,\dy v)|\leq \epsilon\|v\|_2^2+C_\epsilon\|v\|^c.
\end{equation*}
Then

\begin{equation*}
\alpha E_1'(v,\dy v)\leq -(2-\epsilon)\alpha\|v\|_2^2+C_\epsilon\alpha\|v\|^c.
\end{equation*}
Taking into account some properties of $H$, it suffices to treat $(vH\dx v,w)+(v^3,w)$ instead of $R_1'(v,w)$ for our purpose.\\ Now
\begin{align*}
\sqrt{\alpha}|(\dy v, \dx (vz))| &\leq C\sqrt{\alpha}\|v\|_2\|v\|_1\|z\|_1\\
&\leq \epsilon\alpha\|v\|_2^2+C_\epsilon\|v\|_1^2\|z\|_1^2\leq \epsilon\alpha\|v\|_2^2+C_{T,\epsilon,\omega}\|v\|_1^2,\\
\sqrt{\alpha}|(vH\dx v,\dx (vz))| &= \sqrt{\alpha}|(\dx(vH\dx v),vz)|\\
&\leq C\sqrt{\alpha}\|v\|_1\|v\|_2\|v\|\|z\|_{\frac{1+}{2}}\\
&\leq \epsilon\alpha\|v\|_2^2+C_\epsilon\|v\|_1^2\|v\|^2\|z\|_{\frac{1+}{2}}^2\leq \epsilon\alpha\|v\|_2^2+C_{T,\epsilon,\omega}\|v\|_1^2,\\
\sqrt{\alpha}|(v^3,\dx (vz))|&\leq C\sqrt{\alpha}\|v\|_{L^6}^3\|v\|_1\|z\|_1\\
&\leq C\sqrt{\alpha}\|v\|_{1/3}^3\|v\|_1\|z\|_1\\
&\leq C\sqrt{\alpha}\|v\|^2\|v\|_1^2\|z\|_1\leq \sqrt{\alpha}C_{T,\omega}\|v\|_1^2.
\end{align*}
To summarize, we have
\begin{equation*}
\sqrt{\alpha}E_1'(v,\dx (vz))\leq \epsilon\|v\|_2^2+C_{T,\epsilon,\omega}\|v\|_1^2.
\end{equation*}
To estimate the last term, we compute
\begin{align*}
\alpha|(\dy v,\dx z^2)| &\leq C \alpha\|v\|_2\|z\|_1^2\\
&\leq \epsilon\alpha\|v\|_2^2+\alpha C_\epsilon\|z\|_1^4,\\
\alpha|(vH\dx v,\dx z^2)| &\leq C\alpha\|v\|_2\|v\|_1\|z\|_{1/4}^2\\
&\leq \epsilon\alpha\|v\|_2^2+\alpha C_{T,\epsilon,\omega}\|v\|_1^2,\\
\alpha|(v^3,\dx z^2)|&\leq C\alpha\|v\|^2\|v\|_1\|z\|_1^2\\
&\leq\epsilon\alpha\|v\|_1^2+\alpha C_{T,\epsilon,\omega}.
\end{align*}
To conclude, we can choose $\epsilon$ so that
\begin{equation*}
E_1(v)+\alpha\int_0^t\|v(r)\|_2^2dr\leq E_1(v_0)+C^1_{T,\omega}\int_0^t\|v(r)\|_1^2dr+C^2_{T,\omega}t.
\end{equation*}
Recalling the inequality $(\ref{apriori_est_on_En})$ and $(\ref{e1})$ we have 
\begin{equation*}
\|v\|_1^2+2\alpha\int_0^t\|v(r)\|_2^2dr\leq E_1(v_0)+C^0_{T,\omega}+C^1_{T,\omega}\int_0^t\|v(r)\|_1^2dr+C^2_{T,\omega}t.
\end{equation*}

With the use of the Gronwall lemma, we arrive at
\begin{equation*}
\sup_{t\in[0,T]}\|v(t)\|_1^2+2\alpha\int_0^T\|v(t)\|_2^2dt\leq C_{T,\omega}(\|v_0\|_1).
\end{equation*}

\paragraph{The case $n=2:$}
Recall that
\begin{align*}
E_2(u) &= \int (\dy u)^2-\frac{5}{4}\int \left((\dx u)^2H\dx u+2\dy uH\dx u\right)\\
&+\frac{5}{16}\int\left(5u^2(\dx u)^2+u^2(H\dx u)^2+2uH(\dx u)H(u\dx u)\right)\\
&+\int\left(\frac{5}{32}u^4H(\dx u)+\frac{5}{24}u^3H(u\dx u)\right)+\frac{1}{48}\int u^6.
\end{align*}
The form of $E_2(v)$ combined with some properties of $H$ allows us to reduce to the treatment of the quantity
\begin{equation*}
R_2(v)=\|v\|_2^2+\int(\dx v)^3+(\dy v,H\dx v)+(v^2,(\dx v)^2)+(v^4,H\dx v)+\int v^6.
\end{equation*}
Then 
\begin{align*}
R_2'(v,w)&=2(\dy v,\dy w)+3((\dx v)^2,\dx w)+2(\dy v,H\dx w)+2(vw,(\dx v)^2)\\ &+2(v^2\dx v,\dx w)+4(v^3H\dx v,w)+(v^4,H\dx w)+6(v^5,w)\\
&=2(\dy v,\dy w)+R_3'(v,w).
\end{align*}
It was already shown in the proof Lemma $\ref{lemme_central}$ (see estimates $(\ref{estim_r_n_2,j})$ and $(\ref{estim_R_n^1})$) that
\begin{equation*}
|R_3'(v,\dy v)|\leq \epsilon\|v\|_3^2+C_\epsilon\|v\|^c,
\end{equation*}
for some constants $c,\ C_\epsilon>0$.
Now we have
\begin{align*}
2\alpha(\dy v,\dy(\dy v))=-2\alpha\|v\|_3^2.
\end{align*}
Then
\begin{equation*}
\alpha E_2'(v,\dy v)\leq -(2-\epsilon)\alpha\|v\|_3^2+\alpha C_\epsilon\|v\|^c.
\end{equation*}
Now
\begin{align*}
\sqrt{\alpha}|(\dy v, \dy (\dx(vz)))| &\leq C\sqrt{\alpha}\|v\|_3\|v\|_2\|z\|_2\leq \epsilon\alpha\|v\|_3^2+C_{T,\epsilon,\omega}\|v\|_2^2,\\
\sqrt{\alpha}|((\dx v)^2,\dy (vz))| &\leq C_{T,\omega}\sqrt{\alpha}\|v\|_{5/4}^2\|v\|_2\\
&\leq C_{T,\omega}\sqrt{\alpha}\|v\|_{1/4}\|v\|_{2}^2\leq C_{T,\omega}\sqrt{\alpha}\|v\|_2^2,\\
\sqrt{\alpha}|(\dy (vz),H\dy v)| &\leq\sqrt{\alpha}C\|v\|_2^2\|z\|_2\leq \sqrt{\alpha} C_{T,\omega}\|v\|_2^2,\\
\sqrt{\alpha}|(v(\dx v)^2,\dx (vz))| &\leq C\sqrt{\alpha}\|v\|\|z\|_{\frac{1+}{2}}\|v\|_2^2\leq C_{T,\omega}\sqrt{\alpha}\|v\|_2^2,\\
\sqrt{\alpha}|(v^2\dx v,\dy (vz))| &\leq \sqrt{\alpha} C\|v\|_1^3\|v\|_2\|z\|_2\leq C_{T,\omega}\sqrt{\alpha}\|v\|_2^2,\\
\sqrt{\alpha}|(v^3H\dx v,\dx (vz))| &\leq C\sqrt{\alpha} \|v\|_1^3\|v\|_2\|z\|_{\frac{1+}{2}}\|v\|\leq \sqrt{\alpha}C_{T,\omega}\|v\|_2^2,\\
\sqrt{\alpha}|(v^4,H\dy (vz))| &\leq\sqrt{\alpha} C\|v\|_1^5\|z\|_1\leq \sqrt{\alpha}C_{T,\omega},\\
\sqrt{\alpha}|(v^5,\dx (vz))| &\leq C\sqrt{\alpha}\|v\|_1^5\|v\|\|z\|_{\frac{1+}{2}}\leq \sqrt{\alpha}C_{T,\omega}.
\end{align*}
The estimates concerning the term $\dx z^2$ are easier because they do not contain $v$. Finally, using the same argument as before (in the case of $E_1(v)$), we arrive at the claimed result.

\subsection*{The periodic Hilbert transform}

We present in this section a definition of the Hilbert transform in the periodic setting and establish some of its elementary properties. 
Recall that the sequence defined by
$$e_n(x)=\left\{\begin{array}{l r c} 
\frac{\sin(nx)}{\sqrt{\pi}}\ \ \ \ \text{if}\ n<0,\\
\frac{\cos(nx)}{\sqrt{\pi}}\ \ \ \ \text{if}\ n>0,
\end{array}
\right.$$
forms a Hilbertian basis of $\dot{H}(\T)$, let us denote this basis by $\mathcal{B}$. We define the Hilbert transform on $\mathcal{B}$ by
 \begin{equation*}
 He_n(x)=sgn(n)e_{-n}(x),
 \end{equation*}
 where 
 $$sgn(p)=\left\{\begin{array}{l r c}
 1\ \ \ \ \text{if}\ p>0,\\
 0\ \ \ \ \text{if}\ p=0,\\
 -1\ \ \ \ \text{if}\ p<0.
 \end{array}
 \right.$$
 We first remark that $H$ defines an isometry on $\dot{H}$.
 \begin{prop}\label{Prop_de_H}
 Let $f,g\in \dot{H}(\T)$, then
 \begin{align}
 H^2f &=-f\label{H_involution}\\
 \int_\T Hf &=0\label{H_int_null}\\
 (g,Hf) &=-(Hg,f)\label{H_anti_sym}\\
 \widehat{Hf}_0(p) &=-i\osgn(p)\hat{f}_0(p)\label{H_def_by_Fourier},
 \end{align}
 where $\hat{h}_0$ denotes the complex Fourier coefficient of a function $h$ defined below.
 \end{prop}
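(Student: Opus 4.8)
The plan is to verify all four identities first on the orthonormal basis $\{e_n\}_{n\in\Z}$ and then extend to arbitrary $f,g\in\dot{H}(\T)$ by linearity and continuity. Since $He_n=\osgn(n)\,e_{-n}$ sends each basis vector to another basis vector up to sign, $H$ carries the orthonormal basis into an orthonormal set and hence extends to a bounded (in fact isometric) operator on $\dot{H}(\T)$; this is exactly the remark made just before the statement. Consequently each identity below, being linear or bilinear and continuous in its arguments, holds on all of $\dot{H}(\T)$ as soon as it is checked on the $e_n$.

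For $(\ref{H_involution})$ I would compute, for every $n\in\Z$,
\[
H^2e_n=H\bigl(\osgn(n)\,e_{-n}\bigr)=\osgn(n)\,\osgn(-n)\,e_n=-e_n,
\]
using $\osgn(-n)=-\osgn(n)$, and then invoke linearity and continuity to get $H^2f=-f$. For $(\ref{H_int_null})$ I note that each $e_n$ is a pure sine or cosine of nonzero frequency, hence of vanishing mean, and that $He_n=\pm e_{\mp n}$ is again such a function; thus $\int_\T He_n=0$ for all $n$, and the identity passes to general $f$ by linearity and continuity of $f\mapsto\int_\T f$.

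For the antisymmetry $(\ref{H_anti_sym})$ I would exploit orthonormality of $\{e_n\}$:
\[
(e_m,He_n)=\osgn(n)\,(e_m,e_{-n})=\osgn(n)\,\delta_{m,-n},
\qquad
(He_m,e_n)=\osgn(m)\,\delta_{-m,n}=\osgn(m)\,\delta_{m,-n}.
\]
The only nonzero case is $m=-n$, where $\osgn(m)=\osgn(-n)=-\osgn(n)$, so the two expressions are negatives of one another; hence $(e_m,He_n)=-(He_m,e_n)$ for all $m,n$, and bilinearity together with continuity yields $(g,Hf)=-(Hg,f)$ on $\dot{H}(\T)$.

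Finally, for the Fourier multiplier identity $(\ref{H_def_by_Fourier})$ I would pass from the real basis to the complex exponentials: for $p>0$ I express $\cos(px)$ and $\sin(px)$ through $e_p$ and $e_{-p}$, so that the exponential weighted by $\hat{f}_0(p)$ becomes an explicit combination of $e_p$ and $e_{-p}$, apply $H$ via $He_p=e_{-p}$ and $He_{-p}=-e_p$, and recombine to read off that $H$ multiplies that exponential by the scalar $-i\osgn(p)$; linearity in the complex Fourier expansion then gives $\widehat{Hf}_0(p)=-i\osgn(p)\hat{f}_0(p)$. The delicate point, and the one I expect to be the main obstacle, is the sign bookkeeping introduced by the convention that $e_n$ for $n<0$ equals $\sin(nx)/\sqrt{\pi}$ rather than $\sin(|n|x)/\sqrt{\pi}$: this flips a sign relative to the naive identification, so landing on precisely $-i\osgn(p)$ (and not $+i\osgn(p)$) requires carefully matching the orientation of $H$ on the real basis against the exact normalization of $\hat{f}_0(p)$ fixed immediately after the proposition.
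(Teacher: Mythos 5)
Your arguments for $(\ref{H_involution})$, $(\ref{H_int_null})$ and $(\ref{H_anti_sym})$ are correct and essentially the paper's own (the paper routes the same basis computations through the explicit expansions $(\ref{H_repr_f_Four})$--$(\ref{H_repr_Hf_Four})$, which is only cosmetically different). The genuine gap is in $(\ref{H_def_by_Fourier})$: you defer exactly the step that carries all the content --- the sign bookkeeping you yourself call ``the main obstacle'' --- and that step, carried out honestly, does not produce the stated formula. For $p>0$, the definition $He_n=\osgn(n)e_{-n}$ gives, under either of the paper's two (mutually swapped) basis conventions, $H[\cos(px)]=-\sin(px)$ and $H[\sin(px)]=\cos(px)$; for instance with the appendix convention $e_p=\cos(px)/\sqrt{\pi}$ and $e_{-p}=\sin(-px)/\sqrt{\pi}=-\sin(px)/\sqrt{\pi}$, so that $H[\cos(px)]=\sqrt{\pi}\,e_{-p}=-\sin(px)$ and $H[\sin(px)]=-\sqrt{\pi}\,He_{-p}=\sqrt{\pi}\,e_p=\cos(px)$. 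Hence
\begin{equation*}
He^{ipx}=-\sin(px)+i\cos(px)=i\,e^{ipx}\qquad (p>0),
\end{equation*}
that is, $\widehat{Hf}_0(p)=+i\,\osgn(p)\,\hat{f}_0(p)$, the \emph{opposite} sign. A concrete real-valued test: $f=\cos x$ has $\hat{f}_0(1)=1/2$ and $Hf=-\sin x$, so $\widehat{Hf}_0(1)=i/2\neq -i/2=-i\,\osgn(1)\hat{f}_0(1)$. So no amount of careful matching can land on $-i\,\osgn(p)$: identity $(\ref{H_def_by_Fourier})$ is inconsistent with the basis definition of $H$, and your plan for the fourth identity fails as stated.

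Your instinct about this sign was in fact sharper than the paper itself: the paper's proof slips at the same spot, asserting $\widehat{Hf}_1(p)=-\hat{f}_2(p)$ and $\widehat{Hf}_2(p)=\hat{f}_1(p)$, whereas the representation $(\ref{H_repr_Hf_Four})$ actually yields $\widehat{Hf}_1(p)=+\hat{f}_2(p)$ and $\widehat{Hf}_2(p)=-\hat{f}_1(p)$; this compensating error is how $-i\,\osgn(p)$ is ``derived'' there. The repair is global and harmless: either define $He_n=-\osgn(n)e_{-n}$, or replace $\sin(nx)$ by $\sin(|n|x)$ in the definition of the basis for negative indices --- precisely the discrepancy you pointed at --- after which $H$ really is the multiplier $-i\,\osgn$ announced in the introduction and your outline for $(\ref{H_def_by_Fourier})$ closes verbatim; the first three identities, and everything in the body of the paper, are insensitive to the overall sign of $H$ (replacing $H$ by $-H$ amounts to a symmetry of the equation). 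But as submitted, your proof of $(\ref{H_def_by_Fourier})$ consists of announcing the answer and postponing the only nontrivial verification, and that verification refutes, rather than proves, the formula for the operator as defined.
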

 Define now the Fourier coefficients associated to a function $f$ in $\dot{H}$:
  \begin{align*}
 \hat{f}_1(n) &=\frac{1}{\sqrt{\pi}}\int_\T \cos(nx)f(x)dx\\
 \hat{f}_2(n) &=\frac{1}{\sqrt{\pi}}\int_\T \sin(nx)f(x)dx.
 \end{align*}
The function $f$ is represented  in $\mathcal{B}$ as follows:
  \begin{equation}
 f(x)=\sum_{n>0}(\hat{f}_1(n)e_n(x)-\hat{f}_2(n)e_{-n}(x)).\label{H_repr_f_Four}
 \end{equation}
 Hence the Hilbert transform of $f$ can be expressed as
 \begin{equation}
 Hf(x)=\sum_{n>0}(\hat{f}_1(n)e_{-n}(x)+\hat{f}_2(n)e_{n}(x)).\label{H_repr_Hf_Four}
 \end{equation}
 The complex Fourier coefficient is defined by
 \begin{equation}
 \hat{f}_0(p)=\frac{1}{\sqrt{2\pi}}\int_\T e^{-ipx}f(x)dx.\label{H_complex_coeficient_Four}
 \end{equation}
 The relationship between the three Fourier coefficients of $f$ is
 \begin{equation}
 \hat{f}_0(p)=\frac{\hat{f}_1(p)-i\osgn(p)\hat{f}_2(p)}{\sqrt{2}}.\label{H_relat_bet_3_ceoff}
 \end{equation}
 
\begin{proof}[Proof of Proposition $\ref{Prop_de_H}$]

Equation $(\ref{H_int_null})$ follows immediately from $(\ref{H_repr_Hf_Four})$. 
 Now from $(\ref{H_repr_f_Four})$ and $(\ref{H_repr_Hf_Four})$, we can easily deduce that
 \begin{align*}
 H^2f(x) &=-\sum_{n>0}(\hat{f}_1(n)e_{n}(x)-\hat{f}_2(n)e_{-n}(x))=-f(x).
 \end{align*}
and $(\ref{H_involution})$ is shown.
 
 From $(\ref{H_repr_Hf_Four})$, we infer that
 \begin{align*}
 \widehat{Hf}_1(p) =-\hat{f}_2(p),\ \ \ \ \widehat{Hf}_2(p) =\hat{f}_1(p).
 \end{align*}
 Thus using the relation $(\ref{H_relat_bet_3_ceoff})$, we can write
 \begin{align*}
 \widehat{Hf}_0(p)&=\frac{-\hat{f}_2(p)-i\osgn(p)\hat{f}_1(p)}{\sqrt{2}}\\
 &=\frac{-i\osgn(p)(\hat{f}_1(p)-i\osgn(p)\hat{f}_2(p))}{\sqrt{2}}\\
 &=-i\osgn(p)\hat{f}_0(p),
 \end{align*}
 and we have arrived at $(\ref{H_def_by_Fourier})$.
 
  To prove $(\ref{H_anti_sym})$, we compute
 \begin{align*}
 (g,Hf) &=\sum_{n>0}\hat{f}_1(n)\int_\T g(x)e_{-n}(x)dx+\sum_{n>0}\hat{f}_2(n)\int_\T g(x)e_{n}(x)dx\\
 &=-\sum_{n>0}\hat{f}_1(n)\hat{g}_2(n)+\sum_{n>0}\hat{g}_1(n)\hat{f}_2(n)\\
 &=-\sum_{n>0}\hat{g}_2(n)\int_\T f(x)e_n(x)dx-\sum_{n>0}\hat{g}_1(n)\int_\T f(x)e_{-n}(x)dx\\
 &=-\int_\T f(x)\sum_{n>0}(\hat{g}_1(n)e_{-n}(x)+\hat{g}_2(n)e_n(x))\\
 &=-(Hg,f).
 \end{align*}
 \end{proof}

\paragraph{Acknowledgements.} I thank my advisors Armen Shirikyan and Nikolay Tzvetkov for useful discussions and valuable remarks.  I am grateful to the referee for valuable remarks that were very useful for improving the text. This research was supported by the program DIM RDMath of "Région Ile-de-France".
 
 \bibliographystyle{alpha}
\bibliography{boob}
\end{document}